\colorlet{lightlightgray}{gray!20}
\colorlet{light1.5gray}{gray!35}
\newtheorem{theorem}{Theorem}[section]
\newtheorem*{theorem*}{Theorem}
\newtheorem*{maintheorem*}{Main Theorem}
\newtheorem{lemma}[theorem]{Lemma}
\newtheorem{corollary}[theorem]{Corollary}
\newtheorem{proposition}[theorem]{Proposition}
\newtheorem*{question*}{Question}
\theoremstyle{remark}
\newtheorem{remark}[theorem]{Remark}
\theoremstyle{definition}
\newtheorem{definition}[theorem]{Definition}
\newtheorem{claim}{Claim}
\setlist[1]{labelindent=\parindent, leftmargin=*}
\DeclareMathOperator{\GL}{GL}
\DeclareMathOperator{\Aut}{Aut}
\DeclareMathOperator{\Spec}{Spec}
\DeclareMathOperator{\Hom}{Hom}
\DeclareMathOperator{\rank}{rank}
\DeclareMathOperator{\Conv}{Conv}
\DeclareMathOperator{\Span}{Span}
\DeclareMathOperator{\inter}{int}
\DeclareMathOperator{\Mat}{Mat}
\DeclareMathOperator{\grp}{alg.grp}
\newcommand{\norm}[1]{\left\lVert#1\right\rVert}
\newcommand{\GG}{\mathbb{G}}
\newcommand{\RR}{\mathbb{R}}
\newcommand{\ZZ}{\mathbb{Z}}
\newcommand{\NN}{\mathbb{N}}
\renewcommand{\AA}{\mathbb{A}}
\newcommand{\kk}{\textbf{k}}
\newcommand{\name}[1]{#1}
\renewcommand{\phi}{\varphi}
\newcommand{\set}[2]{\left\{\,#1 \ | \ #2\,\right\}}
\newcommand{\Bigset}[2]{\left\{\,#1 \ \Big| \ #2\,\right\}}
\newcommand{\sprod}[2]{\langle #1, #2 \rangle}
\newcommand*{\qrr@gobblenexttocentry}[5]{}
\newcommand*{\qrr@gobblenexttocentry}[4]{}
\newcommand*{\addsubsection}{%
	\addtocontents{toc}{\protect\qrr@gobblenexttocentry}%
	\subsection}
\title[On the Weights of Root Subgroups of Affine Toric Varieties]
{On the Weights of Root Subgroups of Affine Toric Varieties}
\author[Immanuel van Santen]
{Immanuel van Santen}
\thanks{}
\address{Departement Mathematik und Informatik, 
	Universit\"at Basel,\newline
	\indent Spiegelgasse 1, CH-4051 Basel, Switzerland}
\email{immanuel.van.santen@math.ch}
\begin{document}

\begin{abstract}
	Let $X$ be an affine toric variety and let $D(X)$ be the set of weights
	of all root subgroups. It is known that $D(X)$ together with its embedding into the
	character group determines $X$ as a toric variety.
	In this article we prove that $X$ is already determined by the abstract set $D(X)$
	together with some additional combinatorial data.
\end{abstract}

\subjclass[2020]{14R20, 14M25}
	
	\maketitle

	\setcounter{tocdepth}{1}
	\tableofcontents

	\section{Introduction}
	\label{sec.Introduction}

	Throughout this article we work over an algebraically closed
	field $\kk$ of characteristic zero. Let $T$ be an \emph{algebraic torus of dimension $n$}, 
	i.e.~$T$ is isomorphic to $\GG_m^n$,
	where $\GG_m$ denotes the multiplicative subgroup of $\kk$, and 
	let $X$ be an affine \emph{$T$-toric} variety, i.e.~$X$ is normal and
	endowed with a faithful algebraic $T$-action such that 
	there is an open dense $T$-orbit.  In this article, we study combinatorial
	data that determine $X$ as a toric variety.
	
	Let $x_0 \in T$ be an element in the open $T$-orbit
	and consider the open embedding $\eta_{x_0} \colon T \to T x_0 \subseteq X$.
	Denote by $\frak{X}(T)$ the \emph{character group of $T$}, i.e.~the group of all
	algebraic group homomorphisms $T \to \GG_m$, and define 
	\begin{eqnarray*}
		M_X &\coloneqq& 
		\Bigset{e \in \frak{X}(T)}{  \begin{array}{l}
										\textrm{$e$ extends to a morphism} \\ 
										\textrm{$X \to \AA^{1} \setminus \{0\}$ via $\eta_{x_0}$}
									\end{array} } \, , \\
		\Lambda(X) &\coloneqq& \Bigset{e \in \frak{X}(T)}{
			\begin{array}{l}
				\textrm{$e$ extends to a morphism} \\ 
				\textrm{$X \to \AA^{1}$ via $\eta_{x_0}$}
			\end{array} }
		 \, .
	\end{eqnarray*}
	The subgroup $M_X$ of $\frak{X}(T)$ and the subsemi-group $\Lambda(X)$ of $\frak{X}(T)$ 
	don't depend on the choice of $x_0$.
	It is well-known that $\Lambda(X)$
	determines $X$ as a toric variety. 

	In the study of the automorphism group $\Aut(X)$ of $X$ and related questions, 
	the root subgroups play an important role, 
	see e.g.~\cite{De1970Sous-groupes-algeb, Co1995The-homogeneous-co, Li2010Affine-Bbb-T-varie,
	ArZaKu2012Flag-varieties-tor, 
	LiReUr2023Characterization-o, ReSa2021Characterizing-smo, Ka2022Lines-in-affine-to}.
	Recall that a subgroup $U$ of $\Aut(X)$ is called a 
	\emph{root subgroup with respect to $T$}
	if there exists a $\GG_a$-action $\rho \colon \GG_a \times X \to X$ 
	of the additive subgroup $\GG_a$ of $\kk$ such that
	the image of the natural homomorphism $\GG_a \to \Aut(X)$ is equal to $U$ and
	if there exists $e \in \frak{X}(T)$ such that
	\[
		t \rho(s, t^{-1}x) = \rho(e(t)s, x) \qquad 
		\textrm{for all $s \in \GG_a$, $t \in T$, $x \in X$} \, .
	\]
	The character  $e$ is uniquely determined by $U$ and it is called
	the \emph{weight} of $U$. Let
	\[
		D(X) \coloneqq 
		\Bigset{e \in \frak{X}(T)}{  \begin{array}{l}
													\textrm{$e$ is the weight of a root subgroup} \\
											  		\textrm{in $\Aut(X)$ with respect to $T$}
										  		\end{array} } \, .
	\]
	It is natural to ask, to what extent $D(X)$ determines $X$. In fact, 
	the set $D(X)$ together with its embedding into the character group $\frak{X}(T)$ determines $X$, 
	see~\cite[Lemma~6.11]{LiReUr2023Characterization-o}. Moreover, 
	if $X$ is \emph{non-degenerate} 
	(i.e.~$M_X$ is trivial), then one has the following formula
	\begin{equation}
		\label{Eq.Formula_Weight_monoid}
		\Lambda(X) = \Conv(D(X)_{\infty}) \cap \frak{X}(T) \, ,
	\end{equation}
	where $D(X)_{\infty}$ denotes the so-called \emph{asymptotic cone} in 
	$\frak{X}(T) \otimes_{\ZZ} \RR$
	(see Sec.~\ref{subsec.Affine_toric_varieties} for the definition)
	and $\Conv$ stands for the convex hull, see~\cite[Corollary~8.4]{ReSa2021Characterizing-smo}.
	In this article, we show that in fact the (abstract) set $D(X)$
	together with some additional combinatorial data of $D(X)$ already determines $X$.

	\smallskip

	For the formulation of our main result we introduce some further notation. 
	The map that sends a root subgroup
	in $\Aut(X)$ with respect to $T$ onto its weight in $D(X)$ is a bijection. Moreover, 
	$D(X)$ is invariant under adding elements from $M_X$ and $D(X)$ can be naturally
	written as a disjoint union of subsets $S_{\rho_1}, \ldots, S_{\rho_r}$, 
	see Subsec.~\ref{subsec.Weights_root_subgroups}.
	In fact, the subsets $S_{\rho_1}, \ldots, S_{\rho_r}$ 
	of $D(X)$ can be characterized as those subsets of $D(X)$ that 
	are maximal with respect to the property that
	for any two elements of it the corresponding root subgroups commute, 
	see Lemma~\ref{Lem.Interpreting_Srho}.

		After choosing a basis of $\frak{X}(T)$ we may define
		for $(e_1, \ldots, e_n) \in \frak{X}(T)^n$ the absolute value of
		its determinant
		\[
			|\det(e_1, \ldots, e_{n})| \in \NN_0 \, .
		\]
	This quantity is independent of the choice of a basis of $\frak{X}(T)$.
	In fact, if $e_1, \ldots, e_{n}$ are linearly independent, then 
	$|\det(e_1, \ldots, e_{n})|$ is equal to the order of the finite subgroup
	$\bigcap_{i=1}^n \ker(e_i)$ of $T$, see 
	Lemma~\ref{Lem.Interpreting_Det}.									

	Furthermore, let $X'$ be an affine $T'$-toric variety for an $n$-dimensional algebraic
	torus $T'$. We may define the subgroup
	$M_{X'}$ and the subset $D(X')$ of $\frak{X}(T')$ analogously as for $X$, 
	and we have again a natural decomposition of $D(X')$ into disjoint subsets 
	$S_{\rho'_1}, \ldots, S_{\rho'_{r'}}$ for some integer $r'$. 
	
	\begin{maintheorem*}
			Assume there is a bijection
			$\Upsilon \colon D(X) \to D(X')$ such that
			\begin{enumerate}[label=\roman*)]
				\item \label{thm.properties_bijection_Theta_1} $r = r'$ and 
				there is a permutation $\xi$ of 
				$\{1, \ldots, r\}$ such that for all $i$
				\[\Upsilon(S_{\rho_i}) = S_{\rho'_{\xi(i)}} \, ; \]
				\item \label{thm.properties_bijection_Theta_2} for all $n$-tuples $(e_1, \ldots, e_n) \in D(X)^n$
						we have
						\[
							|\det(e_1, \ldots, e_n)| = 
							|\det(\Upsilon(e_1), \ldots, \Upsilon(e_n))| \, ;
						\]
				\item \label{thm.properties_bijection_Theta_3} 
						$\rank M_X = \rank M_{X'}$ and for all $e \in D(X)$ we have
						\[
							\Upsilon(e + M_X) = \Upsilon(e) + M_{X'} \, .
						\]
			\end{enumerate}
			Then $X$, $X'$ are isomorphic as toric varieties, 
			i.e.~there exists an isomorphism $\varphi \colon X \to X'$ of varieties
			and an isomorphism of algebraic groups $\varepsilon \colon T \to T'$ with
			\[
					\varphi(tx) = \varepsilon(t) \varphi(x) \quad 
					\textrm{for all $t \in T$, $x \in X$} \, .
			\]
	\end{maintheorem*}
	
	The proof of the main theorem is given at the end of 
	Subsec.~\ref{Subsec.The_strongly_convex_case}. 
	We use~property~\ref{thm.properties_bijection_Theta_3} in order to reduce
	to the case, when $X$ and $X'$ are both non-degenerate in 
	Subsec.~\ref{Subsec.Reduction_to_strongly_convex} and then tackle this case in 
	Subsec.~\ref{Subsec.The_strongly_convex_case}. 

	\begin{remark}
		If $\Upsilon \colon D(X) \to D(X')$ is a bijection that extends to a group isomorphism 
		$\Psi \colon M \to M'$, then using formula~\eqref{Eq.Formula_Weight_monoid} 
		(in fact a slightly more general form 
		of it) one can show that $\Psi$ maps $\Lambda(X)$ onto $\Lambda(X')$ and hence 
		$\Upsilon$ satisfies the 
		properties~\ref{thm.properties_bijection_Theta_1},~\ref{thm.properties_bijection_Theta_2}
		and~\ref{thm.properties_bijection_Theta_3} of the main theorem 
		(see Proposition~\ref{Prop.Reconstruction} for the details).
	\end{remark}
	
	\addsubsection*{Acknowledgements}
	The author would like to thank \name{Andriy Regeta}, \name{Jérémy Blanc} 
	and \name{Peter Feller} for many fruitful discussions.

	\section{Notation and setup concerning affine toric varieties}
	\label{sec.Notations}
	
	%
	%
	We refer the reader to~\cite{Fu1993Introduction-to-to}
	for the basic notions of affine toric varieties.
	Throughout this article, $T$ denotes an algebraic torus of dimension
	$n \geq 1$ and $X$ denotes an affine $T$-toric variety. Thus, we may consider $T$ as a subgroup of $\Aut(X)$. 
	We fix $x_0$ in the open $T$-orbit of $X$ and denote by
	$\eta_{x_0} \colon T \to X$, $t \mapsto t x_0$ the orbit map with respect to $x_0$.
	
	\subsection{General notions}
	
	Let $M = \frak{X}(T)$ 
	be the character group of $T$ and let $N \coloneqq \Hom_{\ZZ}(M, \ZZ)$ the group
	of one-parameter subgroups of $T$. Moreover, denote by $M_{\RR} \coloneqq M \otimes_{\ZZ} \RR$, 
	$N_{\RR} \coloneqq N \otimes_{\ZZ} \RR = \Hom_{\ZZ}(N, \RR)$ the extension by scalars with $\RR$ 
	and denote by
	\[
		M_{\RR} \times N_{\RR} \to \RR \, , \quad (u, v) \mapsto \sprod{u}{v} \coloneqq v(u)
	\]
	the natural perfect pairing between $M_{\RR}$ and $N_{\RR}$. 
	There is a unique
	strongly convex rational polyhedral cone $\sigma \subseteq N_{\RR}$ such that 
	its dual $\sigma^\vee \subseteq M_{\RR}$ satisfies 
	\[
		\kk[X] = \kk[\sigma^\vee \cap M] \subseteq \kk[M] = \kk[T] \, ,
	\]
	where the inclusion $\kk[X] \subseteq \kk[M]$ is induced by 
	the open embedding $\eta_{x_0} \colon T \to X$. Thus, 
	$\kk[X]$ is the subalgebra of those functions in $\kk[M]$ that extend to $X$
	via $\eta_{x_0}$ and therefore $\sigma^{\vee} \cap M = \Lambda(X)$.
	
	\subsection{The Weyl group}	
	\label{subsec.Weylgroup}
	The \emph{Weyl group} of $X$ is defined by
	\[
	W_X \coloneqq N_{\Aut(X)}(T)/T \, ,
	\]
	where $N_{\Aut(X)}(T)$ denotes the normalizer of $T$ in $\Aut(X)$.
	We consider the following action of the Weyl-group on $M$
	\[
		W_X \times M \to M \, , \quad 
		(\gamma, m) \mapsto \gamma \cdot m \coloneqq
		\left(t \mapsto m(\gamma^{-1} \circ t \circ \gamma) \right) \, .
	\]
	This $W_X$-action is linear on $M$, and it is faithful, since the centralizer
	of $T$ in $\Aut(X)$ is equal to $T$ (see e.g.~\cite[Lemma~2.10]{KrReSa2021Is-the-affine-spac}). 
	For every element in $W_X$ there exists a unique
	representative $\gamma \in N_{\Aut(X)}(T)$ with $\gamma(x_0) = x_0$.
	Since for all $t \in T$ and all $\gamma \in N_{\Aut(X)}(T)$ with $\gamma(x_0) = x_0$ we have
	\begin{equation}
		\label{Eq.W_X_action}
		\eta_{x_0}(\gamma^{-1} \circ t \circ \gamma) 
		= (\gamma^{-1} \circ t \circ \gamma)(x_0)
		= \gamma^{-1}(\eta_{x_0}(t)) \, ,
	\end{equation}
	it follows that the $W_X$-action on $M$ leaves the subsemi-group $\sigma^\vee \cap M$
	invariant and that this action is faithful. 
	Hence, via this action on $M$, the Weyl-group $W_X$ corresponds to those algebraic group 
	automorphisms in $\Aut_{\grp}(T)$ that extend to $X$ via $\eta_{x_0}$. 
	
	We consider now the case $X = T$, when $x_0$ denotes the neutral element in $T$.
	We choose an identification $T = \GG_m^n$, and thus we may identify 
	$W_X$ with $\Aut_{\grp}(T)$ as above. The latter group identifies with $\GL_n(\ZZ)$
	via the group isomorphism
	\[
		\begin{array}{rcl}
			\vartheta \colon \GL_n(\ZZ) \xrightarrow{B \mapsto (B^{-1})^t} 
			\GL_n(\ZZ) &\to& \Aut_{\grp}(T) \, , \\
			(a_{ij})_{ij} &\mapsto& 
			\left( (t_1, \ldots, t_n) \mapsto
			\left( \prod_{i=1}^n t_i^{a_{1i}}, \ldots, \prod_{i=1}^n t_i^{a_{ni}} \right) \right) \, ,
		\end{array}
	\]
	where $C^t$ denotes the transpose of a matrix $C$,
	and $M$ identifies with $\ZZ^n$ via the group isomorphism
	\[
		\phi \colon \ZZ^n \to M \, , \quad  (m_1, \ldots, m_n) \mapsto 
		\left( (t_1,\ldots, t_n) \mapsto t_1^{m_1} \cdots t_n^{m_n}  \right) \, .
	\]
	Due to~\eqref{Eq.W_X_action} and by the choice of $\vartheta$, the $W_X$-action on 
	$M$ satisfies
	\[
		\gamma \cdot m = \varphi(\vartheta^{-1}(\gamma) \cdot \phi^{-1}(m)) \, .
	\]

	%
	%
	
	\subsection{Decomposition of the affine toric variety}
	\label{subsec.Decomposition_affine_toric_variety}
	Let $N_0 = \Span_{\ZZ}(\sigma \cap N) \subseteq N$ and denote by $\sigma_0$ 
	the cone in $(N_0)_{\RR}$ that is equal to $\sigma$. Then
	$N_X \coloneqq N / N_0$ is a free abelian group.
	Dualizing the natural maps
	$N_0 \to N \to N_X$, $\sigma_0 \to \sigma \to \{0\}$ 
	gives 
	\[
		M_X \to M \stackrel{\tau}{\to} M_0 \, , \quad
		\sigma^\bot = \{0\}^\vee \to \sigma^\vee \to \sigma_0^\vee  \, ,
	\]	
	where $M_0 \coloneqq \Hom_{\ZZ}(N_0, \ZZ)$ and $M_X = \Hom_{\ZZ}(N_X, \ZZ)$.
	Taking the spectra gives thus a sequence of toric morphisms
	\[
	X_0 \coloneqq \Spec(\kk[\sigma^\vee_0 \cap M_0]) \to \Spec(\kk[\sigma^\vee \cap M])
	\to \Spec(\kk[M_X]) \eqqcolon T_{N_X} \, ,
	\]
	where $X_0$ is $T_0$-toric under the natural action coming from the $\rank(M_0)$-dimensional torus
	\[
		T_0 \coloneqq \Spec(\kk[M_0]) \, .
	\]
	Choosing a (non-natural) splitting of the initial exact sequence
	$N_0 \to N \to N_X$ allows us to write
	\[
	X = X_0 \times T_{N_X} \, , \
	M = M_0 \oplus M_X \quad
	\textrm{and} \quad
	\sigma^\vee = \sigma_0^\vee \oplus \sigma^\bot \, .
	\]
	The $T_0$-toric variety $X_0$ is then non-degenerate, i.e.~$\sigma_0^\vee$ is strongly convex
	or in other words $M_{X_0} = 0$.
	In particular, $W_{X_0}$ permutes the extremal rays of $\sigma_0^\vee$. 
	Since $\sigma_0^\vee$ generates $(M_0)_{\RR}$ (as $\sigma_0$ is strongly convex), 
	we get that $W_{X_0}$ is finite. Note that 
	\[
		M_X = M \cap \sigma^\bot \, 
	\]
	and that $\sigma^\bot$ is the largest linear subspace of $M_{\RR}$ that is contained in $\sigma^\vee$.
	Hence, $W_X$ leaves $M_X$ invariant and thus the projection
	$\tau \colon M \to M_0$ induces a surjection of the Weyl groups
	\[
		\pi_X \colon W_X \to  W_{X_0} \, .
	\]
	In geometric terms, $\pi_X$ maps a $\gamma \in \Aut_{\grp}(T)$
	that extends to an automorphism of $X$ via $\eta_{x_0}$ onto its restriction of the fibre of
	$\Spec(\kk[\sigma^\vee \cap M]) \to T_{N_X}$ over the neutral element of $T_{N_X}$.
	
	The elements from the kernel of $\pi_X$ may be identified with 
	those linear isomorphisms $M \to M$ that map $m + M_X$
	onto itself for all $m \in M$.	If one considers a splitting $M = M_0 \oplus M_X$
	and chooses a basis for $M_0$ and one for $M_X$, the kernel of $\pi_X$
	identifies under $\vartheta \colon \GL_n(\ZZ) \to \Aut_{\grp}(T)$
	(see Subsec.~\ref{subsec.Weylgroup}) with the following matrix group
	\begin{eqnarray*}
		\ker(\pi_X) &=& \Bigset{
			\begin{pmatrix}
				I & 0 \\
				B & A
			\end{pmatrix} \in \GL_n(\ZZ)
		}{A \in \GL_s(\ZZ) \, , \ B \in \Mat_{s \times (n-s)}(\ZZ)} \\
		&=& \Mat_{s \times (n-s)}(\ZZ) \rtimes \GL_s(\ZZ)	\, ,
	\end{eqnarray*}
	where $s = \rank M_X$ and $I \in \GL_{n-s}(\ZZ)$
	denotes the identity matrix.
	
		\subsection{Weights of root subgroups}
		\label{subsec.Weights_root_subgroups}
	%
	Let $\rho \subseteq \sigma$ be an extremal ray and 
	define
	\[
	S_\rho \coloneqq \set{m \in (\sigma_\rho)^\vee \cap M}{\sprod{m}{v_\rho} = -1} \subseteq M \, ,
	\]
	where $\sigma_\rho$ is the rational convex polyhedral cone in $M_{\RR}$ spanned by
	all the extremal rays of $\sigma$ except $\rho$ and $v_\rho$ is the unique primitive
	vector inside $\rho \cap N$. 
	If $\rho_1, \ldots, \rho_r$ are the extremal rays 
	of $\sigma$, then $S_{\rho_1}, \ldots, S_{\rho_r}$ are pairwise disjoint, non-empty 
	(see~\cite[Remark 2.5]{Li2010Affine-Bbb-T-varie}), the union is equal to 
	$D(X)$ and if we denote for an extremal ray $\rho$ of $\sigma$ and $e \in S_\rho$
	\[
	\partial_{\rho, e} \colon \kk[\sigma^\vee \cap M] \to \kk[\sigma^\vee \cap M] \, , \quad
	m \mapsto \sprod{m}{v_\rho} (e + m) \, ,
	\]
	we get a bijection:
	\[
	\begin{array}{rlcc}
	D(X) = \dot\bigcup_{i=1}^r & S_{\rho_i} & \to & 
	\left\{ \,
	\begin{array}{l}
	\textrm{locally nilpotent derivations of $\kk[\sigma^\vee \cap M]$ } \\
	\textrm{corresponding to root subgroups of $\Aut(X)$} \, 
	\end{array}
	\right\} \\
	& S_{\rho_i} \ni e & \mapsto & \partial_{\rho_i, e}
	\end{array}
	\]
	(see \cite[Lemma~2.6, Theorem~2.7]{Li2010Affine-Bbb-T-varie}). The right hand set 
	corresponds in a natural way bijectively to the set of root subgroups in $\Aut(X)$ with respect to $T$; see e.g.~\cite[\S1.5]{Fr2017Algebraic-theory-o}. 
	By~\cite[Lemma~10.3]{Ro2014Sums-and-commutato},
	the root subgroups corresponding to weights in $S_\rho$ do commute in $\Aut(X)$.
	
	Note that for an extremal ray $\rho \subseteq \sigma$ the set 
	$S_{\rho}$ is invariant under adding elements from $\sigma^\vee \cap \rho^\bot \cap M$
	and in particular also under adding elements from $M_X = M \cap \sigma^\bot$. The same
	holds thus for elements in $D(X)$ and therefore
	\[
		D(X) = \tau^{-1}(D(X_0)) \, .
	\]
	
	\subsection{The subsets $H_{E, \rho, d}, H^{\pm}_{E, \rho}$ and $B_{E, \rho, D}$ of $S_{\rho}$}
	We fix an extremal ray $\rho \subseteq \sigma$. Here, we introduce subsets of $S_\rho$ that
	turn out to be very useful in the study of the bijection $\Upsilon \colon D(X) \to D(X')$ from the
	introduction. Roughly speaking, with the aid of these sets we may introduce ``coordinates'' on $S_\rho$.
	
	In order to define them, we will fix once and for all a basis of $M$. In particular, we can compute
	the determinant of an $n$-tuple of elements in $M$ with respect to that basis
	(changing the basis of $M$ leads to a possible switch of the sign of the determinant).
	
	If $E = (e_1, \ldots, e_{n-1}) \in M^{n-1}$, then we denote
	\[
	\Span_{\RR}(E) \coloneqq \Span_{\RR} \{e_1, \ldots, e_{n-1} \}
	\]
	and for any integer $d \geq 0$ we write
	\[
	H_{E, \rho, d} \coloneqq \set{e \in S_{\rho}}{ \det(e_1, \ldots, e_{n-1}, e) =d} \, ,
	\]
	\[
	H_{E, \rho}^{+} \coloneqq \bigcup_{d \geq 0} H_{E, \rho, d}
	\, , \quad
	H_{E, \rho}^{-} \coloneqq \bigcup_{d \leq 0} H_{E, \rho, d} \, .
	\]
	
	Note that if  $\{e_1, \ldots,e_{n-1}\}$ is linearly independent in $M_\RR$, then 
	$H_{E, \rho, d}$ is the intersection of $S_{\rho}$ 
	with an affine hyperplane that is parallel to the hyperplane $\Span_{\RR}(E)$
	inside $M_{\RR}$.
	In the picture below, we illustrate the sets $\sigma^\vee \cap M$, $S_\rho$, $H_{E, \rho, d}$ and 
	$H_{E, \rho, 0}$. All sets in the picture should be thought to be intersected with the lattice $M$
	inside $M_\RR$.
	
	\begin{center}
		\begin{tikzpicture}[scale=1]
		\fill[color=lightlightgray] (-1, 2) -- (0, 0) -- (2,  1) -- (1, 3);
		\draw[color=black, thick, dotted] (-1, 2) -- (2,  1) -- (1, 3) -- (-1, 2);
		\draw[color=black, thick] (0, 0) -- (-1, 2);
		\draw[color=black, thick] (0, 0) -- (2, 1);
		
		\fill[color=lightgray, opacity=0.9] (-1.6, 1.7) -- (-0.3, -0.9) -- (2.3, 0.4);
		
		\begin{scope}
			\path[clip] (-1.6, 1.7) -- (-0.3, -0.9) -- (2.3, 0.4)--cycle;
			\draw[color=blue, thick] (-1.6, 1) -- (2.3, -0.3);
			\draw[color=red, thick] (-1.6, 0.4) -- (2.3, -0.9);
		\end{scope}
		
		\draw[color=black, thick] (-1.6, 1.7) -- (-0.3, -0.9) -- (2.3, 0.4);
		\draw[color=black, thick, dotted] (-1.6, 1.7) -- (2.3, 0.4);	
		
		\draw[color = blue] (-1.9, 0.8) node {$H_{E, \rho, d}$};
		\draw[color = red] (-3, 0.2) node {$\Span_{\RR}(E) \cap S_\rho = H_{E, \rho, 0}$};
		
		\draw[] (0.6, 2.1) node {$\sigma^{\vee}$};
		\draw[] (-0.9, 1.1) node {$S_\rho$};
		\end{tikzpicture}
	\end{center}
	
	Moreover, if $E = (E_1, \ldots, E_{n-1})$, where  $E_i = (e_{i1}, \ldots, e_{i(n-1)}) \in D(X)^{n-1}$
	for $i=1, \ldots, n-1$ and if $D \geq 0$ is an integer, we denote
	\[
	B_{E, \rho, D} \coloneqq \Bigset{e \in S_\rho}{
							\begin{array}{l}
								\textrm{$|\det(e_{i1}, \ldots, e_{i(n-1)}, e)| \geq D$} \\
								\textrm{for all $i = 1, \ldots, n-1$}
							\end{array}}
	\subseteq S_\rho \, .
	\]	
	
	\section{General results on affine toric varieties}
	\label{subsec.Affine_toric_varieties}
	
	In the following section, we gather general results on affine toric varieties.
	We endow once and for all $M_{\RR}$ with a norm that we denote by $\norm{\cdot}$. 
	For a subset $S \subseteq M_{\RR}$, we denote by $S_{\infty} \subseteq M_{\RR}$
	its \emph{asymptotic cone}, i.e.
	\[
		S_\infty \coloneqq \left\{ x \in M_{\RR} \setminus \{0\} \ \Big| \  
		\begin{array}{l}
		\textrm{there exists a sequence $(x_i)_i$ in $S$} \\
		\textrm{with $\norm{x_i} \to \infty$ such that $x_i/\norm{x_i} \to x/\norm{x}$} 
		\end{array}
		\right\} \cup \{0\} \, .
	\]
	We refer the reader to~\cite[Chap. 2]{AuTe2003Asymptotic-cones-a} or \cite[\S2]{ReSa2021Characterizing-smo} for basic properties of asymptotic cones.
	Moreover, recall that the \emph{relative interior} of a subset $S \subseteq M_{\RR}$
	is the interior of $S$ inside $\Span_{\RR}(S)$. We will denote it by $\inter(S)$.
	
	\medskip
	
%
%
	
	We start with three easy observations:
	
	\begin{lemma}[{see \cite[Lemma~3.1]{ReSa2023Maximal-commutativ}}]
		\label{lem.span_facet}
		Let $\rho$ be an extremal ray of $\sigma$. Then the group generated by
		$\sigma^\vee \cap \rho^\bot \cap M$ is equal to $\rho^\bot \cap M$.
	\end{lemma}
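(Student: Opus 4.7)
The approach I would take is purely convex-geometric. First I would identify $F := \sigma^\vee \cap \rho^\bot$ as the face of $\sigma^\vee$ dual to the extremal ray $\rho$ of $\sigma$. The standard dimension formula for dual faces of a strongly convex rational polyhedral cone gives $\dim F = n - \dim \rho = n-1$; since $\rho^\bot \subseteq M_\RR$ is itself an $(n-1)$-dimensional linear subspace, this forces $\Span_\RR(F) = \rho^\bot$.

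Next, since $F$ is a rational polyhedral cone of dimension $n-1$, its relative interior (inside $\rho^\bot$) is non-empty and contains a rational point; after scaling by a suitable positive integer I obtain some $m_0 \in F \cap M$ lying in $\inter(F)$.

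The key step is then the following: for an arbitrary $m \in \rho^\bot \cap M$, I have $m \in \Span_\RR(F)$ and $m_0 \in \inter(F)$, so for all sufficiently large integers $k$ the sum $m + k m_0$ lies in $F$. Being also a lattice point, it belongs to $\sigma^\vee \cap \rho^\bot \cap M$, and the identity
\[
    m = (m + k m_0) - k m_0
\]
expresses $m$ as a difference of two elements of $\sigma^\vee \cap \rho^\bot \cap M$. This yields the inclusion $\rho^\bot \cap M \subseteq \ZZ \cdot (\sigma^\vee \cap \rho^\bot \cap M)$; the reverse inclusion is obvious.

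I do not expect any serious obstacle: the only non-trivial ingredient is the existence of an interior lattice point $m_0$, which is a standard elementary fact for rational polyhedral cones. The dimension identity $\dim F = n-1$ requires a small check if $\sigma$ is not full-dimensional, but the lineality space $\sigma^\bot$ of $\sigma^\vee$ is contained in every face, so the formula remains valid.
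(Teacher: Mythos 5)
Your proof is correct. The paper itself gives no argument for this lemma --- it simply cites \cite[Lemma~3.1]{ReSa2023Maximal-commutativ} --- but your reasoning (face duality giving $\dim(\sigma^\vee \cap \rho^\bot) = n-1$, hence $\Span_{\RR}(\sigma^\vee \cap \rho^\bot) = \rho^\bot$; an interior lattice point $m_0$; and the identity $m = (m + k m_0) - k m_0$ with $m + k m_0 \in \sigma^\vee \cap \rho^\bot \cap M$ for $k \gg 0$) is the standard and essentially the only natural proof of this fact, and every step checks out, including your remark that the dimension formula for the dual face is unaffected when $\sigma$ fails to be full-dimensional.
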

	

	\begin{lemma}[{see \cite[Lemma~3.2]{ReSa2023Maximal-commutativ}}]
		\label{lem.Simultaneous_kernels}
		Let $e_1, \ldots, e_k$ in $M = \frak{X}(T)$, where $k \geq 1$. Then 
		\[
		K \coloneqq  \bigcap_{i=1}^k \ker(e_i) \simeq \GG_m^{n-r} \times F
		\quad \textrm{and} \quad
		\frak{X}(K) \simeq M / \Span_{\ZZ} \{e_1, \ldots, e_k \}
		\]
		for some finite commutative group $F$, where $r = \rank \Span_{\ZZ} \{e_1, \ldots, e_k \}$.
	\end{lemma}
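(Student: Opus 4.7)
The plan is to use the standard duality between diagonalizable algebraic groups and finitely generated abelian groups, which sends $T$ to $M = \frak{X}(T)$ and a closed subgroup $H \subseteq T$ to the quotient of $M$ by the subgroup of characters vanishing on $H$. Concretely, I would apply the functor $\Hom_{\ZZ}(-, \kk^*)$ to the short exact sequence
\[
	0 \to \Span_{\ZZ}\{e_1, \ldots, e_k\} \to M \to M / \Span_{\ZZ}\{e_1, \ldots, e_k\} \to 0 \, .
\]
Because $\kk^*$ is divisible (as $\kk$ is algebraically closed of characteristic zero), it is an injective $\ZZ$-module, so this functor is exact and yields
\[
	0 \to \Hom_\ZZ(M/\Span_{\ZZ}\{e_1, \ldots, e_k\}, \kk^*) \to \Hom_\ZZ(M, \kk^*) \to \Hom_\ZZ(\Span_{\ZZ}\{e_1, \ldots, e_k\}, \kk^*) \to 0 \, .
\]

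The next step is to identify the pieces of this sequence geometrically. The standard duality gives $\Hom_\ZZ(M, \kk^*) = T(\kk)$, and under this identification the right-hand map sends $t \in T$ to the tuple $(e_1(t), \ldots, e_k(t))$ (viewed as a homomorphism of the span). Hence its kernel is exactly $K = \bigcap_{i=1}^k \ker(e_i)$, and we obtain an isomorphism
\[
	K \simeq \Hom_\ZZ(M/\Span_{\ZZ}\{e_1, \ldots, e_k\}, \kk^*) \, ,
\]
which also promotes to an isomorphism of algebraic groups since both sides are the diagonalizable group associated to the same finitely generated abelian group. Reading off the character group from the same identification gives the second claim: $\frak{X}(K) \simeq M/\Span_{\ZZ}\{e_1, \ldots, e_k\}$.

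Finally, to extract the shape $\GG_m^{n-r} \times F$, I invoke the structure theorem for finitely generated abelian groups on $M/\Span_{\ZZ}\{e_1, \ldots, e_k\}$: its free rank is $n - r$ where $r = \rank \Span_{\ZZ}\{e_1, \ldots, e_k\}$, and its torsion subgroup $F'$ is finite. Since $\Hom_\ZZ(\ZZ, \kk^*) \simeq \GG_m$ and $\Hom_\ZZ(F', \kk^*)$ is a finite abelian group $F$, one gets $K \simeq \GG_m^{n-r} \times F$ as algebraic groups. There is no serious obstacle here beyond being careful with the identifications; the only subtlety worth flagging is the use of injectivity of $\kk^*$, which is where the characteristic zero and algebraic closure assumptions enter.
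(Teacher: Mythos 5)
Your proof is correct and complete; it is the standard anti-equivalence argument between diagonalizable groups and finitely generated abelian groups, using the injectivity of $\kk^*$ as a $\ZZ$-module. The paper itself gives no proof of this lemma (it only cites Lemma~3.2 of the reference [RvS23]), so there is nothing internal to compare against, but your argument is exactly the expected one.
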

	
	
	\begin{lemma}
		\label{lem.Finiteness_condition}
		Let $\rho \subseteq \sigma$ be an extremal ray and  
		let $C \subseteq M_{\RR}$ be a closed cone. If 
		$C \cap \sigma^\vee \cap \rho^\bot = \{0\}$, then $(u + C) \cap S_\rho$ is finite
		for all $u \in M_\RR$.
	\end{lemma}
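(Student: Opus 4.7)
The plan is to argue by contradiction via an asymptotic-cone computation. Suppose $(u+C)\cap S_\rho$ is infinite; since $M$ is discrete in $M_\RR$, this means the set is unbounded, so I can extract a sequence $(m_i)$ in $(u+C)\cap S_\rho$ with $\norm{m_i}\to\infty$ and (by compactness of the unit sphere) with $m_i/\norm{m_i}$ convergent to some unit vector $y\in M_\RR$. Then $y$ lies in the asymptotic cone of $(u+C)\cap S_\rho$, and the goal is to show that this asymptotic cone is contained in $C\cap\sigma^\vee\cap\rho^\bot$, which is $\{0\}$ by hypothesis, contradicting $\norm{y}=1$.

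The bookkeeping for the asymptotic cone splits into two observations. First, writing $m_i=u+c_i$ with $c_i\in C$, one has $c_i/\norm{c_i}\to y/\norm{y}$ as well (because $u/\norm{m_i}\to 0$), so $y\in C$ since $C$ is a closed cone. Second, each $m_i$ belongs to the slice $\sigma_\rho^\vee \cap\{\sprod{\cdot}{v_\rho}=-1\}$; the closedness of $\sigma_\rho^\vee$ forces $y\in\sigma_\rho^\vee$, and $\sprod{m_i/\norm{m_i}}{v_\rho}=-1/\norm{m_i}\to 0$ forces $y\in\rho^\bot$. Hence
\[
y\in C\cap \sigma_\rho^\vee\cap \rho^\bot.
\]
A direct check (already used implicitly in the definition of $S_\rho$) shows $\sigma_\rho^\vee\cap\rho^\bot=\sigma^\vee\cap\rho^\bot$: the inclusion $\supseteq$ is clear, and for $\subseteq$ any $m$ in the left side satisfies $\sprod{m}{v}\geq 0$ on all extremal rays of $\sigma$ other than $\rho$ and vanishes on $\rho$, hence $m\in\sigma^\vee$. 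So $y\in C\cap\sigma^\vee\cap\rho^\bot=\{0\}$, giving the desired contradiction.

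I do not anticipate a serious obstacle: all three ingredients (closedness of $C$, closedness of $\sigma_\rho^\vee$, and vanishing of $\sprod{\cdot}{v_\rho}$ in the limit) are elementary, and the identification $\sigma_\rho^\vee\cap\rho^\bot=\sigma^\vee\cap\rho^\bot$ is a one-line polyhedral computation. The only mildly subtle point is ensuring that the limiting direction $y$ is genuinely nonzero, which is handled automatically by normalising the sequence on the unit sphere before passing to the limit.
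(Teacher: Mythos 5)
Your argument is correct and is essentially the paper's own proof: the paper also argues by contradiction, extracts a normalised unbounded sequence, and shows that the asymptotic cone of $(u+C)\cap S_\rho$ sits inside $C\cap\rho^\bot\cap(\sigma_\rho)^\vee=C\cap\rho^\bot\cap\sigma^\vee=\{0\}$, exactly the three containments you verify by hand. The only difference is presentational — the paper invokes the general calculus of asymptotic cones where you unpack the limits directly.
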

	
	\begin{proof}[Proof of Lemma~\ref{lem.Finiteness_condition}]
		Assume that $(u + C) \cap S_\rho$ is non-finite. Hence, 
		there exists a sequence $(m_i)_{i \in \NN}$ inside $(u + C) \cap S_\rho$ such that
		$m_i \to \infty$ for $i \to \infty$ and such that 
		$\lim_{i \to \infty} \frac{m_i}{\norm{m_i}}$ exists in the unit sphere of $M_\RR$. Hence, 
		the asymptotic cone 
		$((u + C) \cap S_\rho)_{\infty}$ 
		contains non-zero elements. For fixed $e_0 \in S_\rho$ we
		get now a contradiction to the fact that $((u + C) \cap S_\rho)_{\infty}$
		is contained in
		\[
			(u + C)_{\infty} \cap (e_0 + \rho^\bot)_{\infty} 
			\cap ((\sigma_\rho)^\vee)_\infty
			= C \cap  \rho^{\bot} \cap (\sigma_\rho)^\vee
			= C \cap \rho^\bot \cap \sigma^\vee =  \{0\} \, . 
			\hfill \qedhere
		\]
	\end{proof}
	
	Roughly speaking, the next lemma says that any two elements from the topological
	boundary $\partial \sigma^\vee$ of $\sigma^\vee$ inside $M_\RR$ can be connected 
	in a ``nice way'' in case $\sigma^\vee$ is strongly convex and $n \geq 3$. 
	It will be used in the proof of Proposition~\ref{prop.psi}
	(which is the key ingredient in the proof of the main theorem).
	Recall that the topological boundary of $\sigma^\vee$ inside $M_{\RR}$  is given by
	\[
	\partial \sigma^\vee = 	\bigcup_{i=1}^r \sigma^\vee \cap \rho_i^\bot \, ,
	\]
	where $\rho_1, \ldots, \rho_r$ denote the extremal rays of $\sigma$.

	\begin{lemma}
		\label{lem.Schoenflies_application}
		Assume that $\sigma^\vee$ is strongly convex.
		Let $n \geq 3$, let $j_1, \ldots, j_s \in \{1, \ldots, r\}$ and 
		let $x, y \in \partial \sigma^\vee$ such that
		\begin{equation}
		\label{Eq.Assumption_star}
		\Span_{\RR}\left(\{x\} \cup \bigcup_{i=1}^s \sigma^\vee \cap \rho_{j_i}^\bot \right) = M_{\RR}
		= \Span_{\RR}\left(\{y\} \cup \bigcup_{i=1}^s \sigma^\vee \cap \rho_{j_i}^\bot \right) \, .
		\end{equation}
		Then there exist
		non-zero $z_0, \ldots, z_k \in \partial \sigma^\vee$ such that 
		\begin{enumerate}[label=\alph*)]
			\item \label{lem.Schoenflies_application_1} 
			$x = z_0$, $y = z_k$;
			\item \label{lem.Schoenflies_application_2} 
			for all $0 \leq l < k$ there exists a facet of $\sigma^\vee$
			that contains $z_l$ and $z_{l+1}$;
			\item \label{lem.Schoenflies_application_3} 
			$\Span_{\RR}(\{z_l\} \cup \bigcup_{i=1}^s \sigma^\vee \cap \rho_{j_i}^\bot) = M_{\RR}$
			for all $0 \leq l \leq k$.
		\end{enumerate}
	\end{lemma}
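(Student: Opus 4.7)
The plan is to reduce the existence of the broken facet-path to a topological connectivity statement on the boundary, and then discretize a continuous path. Set $V := \sum_{i=1}^s \rho_{j_i}^\bot = \Span_\RR\bigl(\bigcup_{i=1}^s \sigma^\vee \cap \rho_{j_i}^\bot\bigr)$. If $s \geq 2$, then $V$ contains two distinct hyperplanes $\rho_{j_1}^\bot \neq \rho_{j_2}^\bot$ and hence equals $M_\RR$; condition~\ref{lem.Schoenflies_application_3} becomes vacuous, so it suffices to connect $x$ and $y$ by a facet-path in $\partial\sigma^\vee \setminus \{0\}$. This latter space retracts onto the link $\partial\sigma^\vee \cap S^{n-1} \cong S^{n-2}$, which is path-connected because $n \geq 3$.

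The interesting case is $s = 1$, where $V = \rho_{j_1}^\bot$. The first step is the identity
\[
	V \cap \partial\sigma^\vee \;=\; F_{j_1} \,\cup\, \bigcup_{j \neq j_1}\bigl(F_j \cap F_{j_1}\bigr) \;=\; F_{j_1} \, ,
\]
valid since $F_j \cap \rho_{j_1}^\bot = F_j \cap F_{j_1} \subseteq F_{j_1}$ for $j \neq j_1$. This immediately turns hypothesis~\eqref{Eq.Assumption_star} into $x, y \notin F_{j_1}$ (in particular $x, y \neq 0$), and turns condition~\ref{lem.Schoenflies_application_3} for a candidate $z_l$ into $z_l \notin F_{j_1}$.

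The key topological claim is then that $\partial\sigma^\vee \setminus F_{j_1}$ is path-connected. In the link picture, $F_{j_1}$ becomes a closed top-dimensional PL cell $\hat F_{j_1}$ inside the PL $(n-2)$-sphere $\hat\partial := \partial\sigma^\vee \cap S^{n-1}$, whose relative boundary is a PL $(n-3)$-sphere. The PL Jordan--Brouwer / Schoenflies theorem -- trivial for $n = 3$, where one simply removes a closed arc from $S^1$ -- identifies $\hat\partial \setminus \hat F_{j_1}$ with an open $(n-2)$-ball, which is path-connected precisely because $n - 2 \geq 1$. This is the main obstacle of the proof; everything that follows is routine.

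Given this, I choose a continuous path $\gamma \colon [0,1] \to \partial\sigma^\vee \setminus V$ (respectively $\gamma \colon [0,1] \to \partial\sigma^\vee \setminus \{0\}$ when $s \geq 2$) with $\gamma(0) = x$ and $\gamma(1) = y$. Since $\gamma([0,1])$ is compact and covered by the finitely many closed facets of $\sigma^\vee$, a standard compactness argument yields a partition $0 = t_0 < t_1 < \cdots < t_k = 1$ together with facets $F^{(1)}, \ldots, F^{(k)}$ such that $\gamma([t_{l-1}, t_l]) \subseteq F^{(l)}$ for every $l$. Setting $z_l := \gamma(t_l)$, properties~\ref{lem.Schoenflies_application_1} and~\ref{lem.Schoenflies_application_2} are immediate, while~\ref{lem.Schoenflies_application_3} and the non-vanishing of each $z_l$ both follow from the fact that $\gamma$ avoids $V \cap \partial\sigma^\vee$ (respectively $\{0\}$) by construction.
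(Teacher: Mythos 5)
Your proof is correct in substance and follows essentially the same route as the paper: reduce to the connectivity of $\partial\sigma^\vee$ with a single facet removed, establish that connectivity by applying the generalized Schoenflies theorem to the $(n-2)$-sphere obtained as the link (the paper uses an affine cross-section $(u+H)\cap\partial\sigma^\vee$ instead of the unit-sphere link, which is cosmetic), and then discretize a path into a chain of facets; the observation that condition~\ref{lem.Schoenflies_application_3} is vacuous unless all the chosen rays coincide is also the paper's.

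Two small slips are worth fixing. First, your case split should be on whether the $j_1,\ldots,j_s$ are \emph{all equal}, not on whether $s\geq 2$: the indices need not be distinct, so for $s\geq 2$ with $j_1=\cdots=j_s$ the claim that $V$ contains two distinct hyperplanes is false; your ``$s=1$'' argument applies verbatim in that case, so this is only a matter of phrasing the dichotomy correctly. Second, the ``standard compactness argument'' producing a partition $0=t_0<\cdots<t_k=1$ with $\gamma([t_{l-1},t_l])$ contained in a single facet is not valid for an arbitrary continuous path: the sets $\gamma^{-1}(F_j)$ form a finite \emph{closed} cover of $[0,1]$, and such covers need not admit a subordinate partition (a path can oscillate between two adjacent facets infinitely often while approaching a point of their common face). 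This is easily repaired either by choosing $\gamma$ piecewise linear (possible since the relevant set is a connected open subset of a polyhedron) or, more cheaply, by noting that the relation ``joinable by a facet chain inside the set'' has finitely many equivalence classes, each closed, so connectedness alone forces a single class. The paper is equally terse at this point, so this is a gap you share with the source rather than one you introduced.
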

	
	\begin{proof}
		First we prove the following claim:
		\begin{claim}
			\label{Claim.Sphere}
			Let $C \subseteq M_{\RR}$ be a strongly convex rational polyhedral cone. 
			If $H \subseteq M_{\RR}$ is a hyperplane with $H \cap C = \{0\}$ and if
			$c \in \inter(C)$, then 
			\begin{enumerate}[label=\roman*)]
				\item \label{Claim.Sphere_i} $(c + H) \cap \partial C$ is homeomorphic to a sphere 
				of dimension $\dim \Span_{\RR}(C) -2$;
				\item \label{Claim.Sphere_ii} $\#(\RR_{> 0} c' \cap (c+H)) = 1$ 
				for all non-zero $c' \in C$,
			\end{enumerate}
			where $\partial C$ is the \emph{relative boundary} of $C$, i.e.~the boundary of
			$C$ inside $\Span_{\RR}(C)$.
		\end{claim}
		
		Indeed, for every non-zero $v \in H \cap \Span_{\RR}(C)$, the ray $c + \RR_{\geq 0}v$
		intersects $\partial C$ in exactly one point: as $C$ is convex, it intersects
		$\partial C$ in at most one point and if it doesn't intersect $\partial C$, then it would be contained
		in $C$ and thus we arrive at the following contradiction:
		\[
		\RR_{\geq 0} v
		= (c + \RR_{\geq 0}v)_{\infty} \subseteq ((c+ H) \cap C)_{\infty} \subseteq H \cap C = \{0\} \, .
		\]
		Note that the closed set $(c + H) \cap \partial C$ of $M_{\RR}$ 
		is bounded (and hence compact),
		since otherwise  $\{0\} \neq ((c + H) \cap \partial C)_{\infty}$, but this last asymptotic cone
		is contained in $H \cap C = \{ 0\}$. Hence, the map
		\[
		(c + H) \cap \partial C \to H \cap \Span_{\RR}(C) \cap S^{n-1} \, , \quad x \mapsto \frac{x-c}{\norm{x-c}}
		\]
		is a homeomorphism, where $S^{n-1} \subseteq M_{\RR}$ denotes the unit sphere with  centre at the origin. This proves Claim~\ref{Claim.Sphere}\ref{Claim.Sphere_i}. As $\RR_{\geq 0} c' \cap H = \{0\}$
		(as $C \cap H = \{0\}$), there exists a unique $t \in \RR$ with $tc' \in c + H$, i.e.~$c-tc' \in H$.
		If $t \leq 0$, then $c-tc' \in \inter(C)$, which contradicts $\inter(C) \cap H = \varnothing$.
		Hence, $t > 0$ and thus Claim~\ref{Claim.Sphere}\ref{Claim.Sphere_ii} follows.
		
		\medskip 
		
		Take $u \in \inter(\sigma^\vee)$. 
		As $\sigma^\vee$ is strongly convex, there exists a hyperplane
		$H$ in $M_{\RR}$ such that $\sigma^\vee \cap H = \{0\}$. 
		By Claim~\ref{Claim.Sphere}\ref{Claim.Sphere_i} 
		applied to $C = \sigma^\vee$ and $c = u$,
		the closed subset $(u + H) \cap \partial \sigma^\vee$ of $M_{\RR}$ is homeomorphic to 
		a sphere of dimension $n-2 \geq 1$. In particular $(u + H) \cap \partial \sigma^\vee$ is path connected,
		and thus we find non-zero $z_0, \ldots, z_k \in \partial \sigma^\vee$ that satisfy~\ref{lem.Schoenflies_application_1} and~\ref{lem.Schoenflies_application_2}.
		In case, not all $j_1, \ldots, j_s$ are equal, we have that 
		$\bigcup_{i=1}^s \sigma^\vee \cap \rho_{j_i}^\bot$ spans $M_{\RR}$
		(see Lemma~\ref{lem.span_facet})
		and thus~\ref{lem.Schoenflies_application_3} is automatically satisfied. Hence,
		we may assume that $\rho \coloneqq \rho_{j_1} = \ldots = \rho_{j_s}$.
		Then there exists 
		\[
			u_0 \in \inter(\sigma^\vee \cap \rho^\bot) \cap (u+H)
		\] 
		by Claim~\ref{Claim.Sphere}\ref{Claim.Sphere_ii} applied to $C = \sigma^\vee$, $c = u$
		and an element $c'$ from $\inter(\sigma^\vee \cap \rho^\bot)$.
		Hence, $u+ H= u_0 +H$ and by Claim~\ref{Claim.Sphere}\ref{Claim.Sphere_i}
		applied to $C = \sigma^\vee \cap \rho^\bot$ and $c = u_0$,
		the closed set $(u + H) \cap \partial (\sigma^\vee \cap \rho^\bot)$ is homeomorphic to a sphere
		of dimension $n-3$. By the generalized Schoenflies theorem 
		(see~ e.g.~\cite[Theorem~19.11]{Br1997Topology-and-geome}), 
		it follows that $(u + H) \cap ((\partial \sigma^\vee) \setminus \rho^\bot)$ is an $(n-2)$-dimensional 
		disc in the $(n-2)$-dimensional sphere
		$(u + H) \cap \partial \sigma^\vee$. The following picture illustrates the situation:
		\begin{center}
			\begin{tikzpicture}[scale=0.6]
			\fill[color=lightlightgray] (-2, 3.5) -- (0, 0)-- (3, 3) -- (2, 4) -- (-0.5, 4);
			\draw[color=black, thick] (0, 0) -- (-2, 3.5);	
			\draw[color=black, thick] (0, 0) -- (0, 2);
			\draw[color=black, thick] (0, 0) -- (3, 3);
			\fill[color=lightgray, opacity=0.9]  (-2, 5) -- (7, 5) -- (3, 1.5) -- (-6, 1.5) -- (-2, 5);
			\draw[color=black, thick, dotted]  (-2, 5) -- (7, 5) -- (3, 1.5) -- (-6, 1.5) -- (-2, 5);
			\fill[color=lightlightgray, opacity=0.5] (-3, 5.25) -- (-2, 3.5) -- (0, 2) -- (3, 3) -- (4.5, 4.5) -- 
									(3, 6) -- (-0.75, 6) -- (-3, 5.25);
			\draw[color=black, thick] (-3, 5.25) -- (-2, 3.5) -- (0, 2) -- (3, 3);
			\draw[color=blue, thick] (-2, 3.5) -- (0, 2) -- (3, 3);
			\draw[color=blue, thick, dotted] (3, 3) -- (2, 4) -- (-0.5, 4) -- (-2, 3.5);
			\fill[color=red, opacity=0.2] (0, 0) -- (0, 3) -- (4.5, 4.5);
			\draw[color=black, thick] (-2, 3.5) -- (-3, 5.25);	
			\draw[color=black, thick] (0, 2) -- (0, 3);
			\draw[color=black, thick] (3, 3) -- (4.5, 4.5);
			\draw[color=black, thick, dotted] (2, 4) -- (3, 6);
			\draw[color=black, thick, dotted] (-0.5, 4) -- (-0.75, 6);
			\draw[color=black, thick, dotted] (-3, 5.25) -- (0, 3)-- (4.5, 4.5) -- (3, 6) -- (-0.75, 6) -- (-3, 5.25);
			\draw[] (-1, 0.25) node {$\sigma^\vee$};
			\draw[] (-4, 2) node {$u + H$};
			\draw[color=blue] (7.5, 3) node {$(u + H) \cap \partial \sigma^\vee$};
			\draw[color=red] (5.4, 2) node {$\sigma^\vee \cap \rho^\bot$};
			\draw[color=black, thick] (1.5, 2.5) circle (1.6pt);
			\draw[] (1.5, 2) node {$u_0$};
			\end{tikzpicture}
		\end{center} 
		
		Note that $x, y \in \partial \sigma^\vee \setminus \rho^\bot$ by~\eqref{Eq.Assumption_star}.
		Thus, we may find non-zero $z_0, \ldots, z_k \in (\partial \sigma^\vee) \setminus \rho^\bot$ 
		that satisfy~\ref{lem.Schoenflies_application_1} and~\ref{lem.Schoenflies_application_2}.
		As $z_i \not\in \rho^\bot$ for all $i=0, \ldots, k$,~\ref{lem.Schoenflies_application_3} is 
		satisfied as well.
	\end{proof}

	We finish this section by interpreting the assumptions~\ref{thm.properties_bijection_Theta_1}
	and~\ref{thm.properties_bijection_Theta_2} of the main theorem in group theoretic terms.

	\begin{lemma}
		\label{Lem.Interpreting_Srho}
		Each of the disjoint sets $S_{\rho_1}, \ldots, S_{\rho_r}$ of $D(X)$ is maximal
		with respect to the property that for any two elements the corresponding
		root subgroups commute.
	\end{lemma}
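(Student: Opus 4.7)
The plan is to combine the fact (cited in the excerpt from~\cite{Ro2014Sums-and-commutato}) that all root subgroups whose weights lie in a single $S_{\rho_i}$ commute with a direct calculation showing that two root subgroups with weights in different $S_{\rho_i}$'s cannot all commute. Concretely, for any $j \neq i$ and any $e' \in S_{\rho_j}$, I will exhibit some $e \in S_{\rho_i}$ for which $\partial_{\rho_i, e}$ and $\partial_{\rho_j, e'}$ do not commute. Maximality then follows: given $S \supsetneq S_{\rho_i}$ with all pairs in $S$ commuting, any $e' \in S \setminus S_{\rho_i}$ lies in some $S_{\rho_j}$ with $j \neq i$, and the $e \in S_{\rho_i} \subseteq S$ produced above gives a contradiction.

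A direct computation on the generators $\chi^m$ of $\kk[\sigma^\vee \cap M]$, using the definition $\partial_{\rho,e}(\chi^m)=\sprod{m}{v_\rho}\chi^{m+e}$, yields
\[
	[\partial_{\rho_i, e}, \partial_{\rho_j, e'}](\chi^m) = \bigl(\sprod{m}{v_{\rho_j}}\sprod{e'}{v_{\rho_i}} - \sprod{m}{v_{\rho_i}}\sprod{e}{v_{\rho_j}}\bigr)\chi^{m + e + e'}
\]
for all $m \in \sigma^\vee \cap M$; the quantities $\sprod{e'}{v_{\rho_i}}$ and $\sprod{e}{v_{\rho_j}}$ are both nonnegative since $e \in \sigma_{\rho_i}^\vee$ and $e' \in \sigma_{\rho_j}^\vee$. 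I would then split into two cases depending on whether $\sprod{e'}{v_{\rho_i}}$ is positive or zero, in each case producing $m$ (and, in the second case, a suitable $e$) making the bracket nonzero.

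If $\sprod{e'}{v_{\rho_i}} > 0$, I pick any $e \in S_{\rho_i}$ and look for $m \in \sigma^\vee \cap \rho_i^\bot \cap M$ with $\sprod{m}{v_{\rho_j}} > 0$; such $m$ exists because by Lemma~\ref{lem.span_facet} the facet $\sigma^\vee \cap \rho_i^\bot$ spans the hyperplane $\rho_i^\bot$, which is distinct from $\rho_j^\bot$ as $v_{\rho_i}$ and $v_{\rho_j}$ are distinct primitive generators of rays in the strongly convex cone $\sigma$. The bracket then reduces to $\sprod{m}{v_{\rho_j}}\sprod{e'}{v_{\rho_i}} > 0$. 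If instead $\sprod{e'}{v_{\rho_i}} = 0$, I first exhibit $e \in S_{\rho_i}$ with $\sprod{e}{v_{\rho_j}} > 0$: were every $e \in S_{\rho_i}$ in $\rho_j^\bot$, then using the invariance $S_{\rho_i} + (\sigma^\vee \cap \rho_i^\bot \cap M) \subseteq S_{\rho_i}$ noted in Subsec.~\ref{subsec.Weights_root_subgroups} I would deduce $\sigma^\vee \cap \rho_i^\bot \cap M \subseteq \rho_j^\bot$, hence $\rho_i^\bot = \rho_j^\bot$ by Lemma~\ref{lem.span_facet}, contradicting the distinctness of $v_{\rho_i}$ and $v_{\rho_j}$. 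A symmetric application of Lemma~\ref{lem.span_facet} then yields $m \in \sigma^\vee \cap \rho_j^\bot \cap M$ with $\sprod{m}{v_{\rho_i}} > 0$, and the bracket becomes $-\sprod{m}{v_{\rho_i}}\sprod{e}{v_{\rho_j}} < 0$. The main delicate point is this second case: one must notice that when $e' \in \rho_i^\bot$ no naive universal choice of $m$ succeeds, and $e$ and $m$ have to be selected on \emph{opposite} facets $\rho_i^\bot$ and $\rho_j^\bot$.
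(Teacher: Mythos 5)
Your proof is correct and follows essentially the same route as the paper: both arguments reduce maximality to showing that a weight in $S_{\rho_j}$ cannot commute with all of $S_{\rho_i}$ for $j \neq i$, and both hinge on perturbing by an element of a facet (via Lemma~\ref{lem.span_facet} and the invariance $S_{\rho} + (\sigma^\vee \cap \rho^\bot \cap M) \subseteq S_\rho$) to break orthogonality against the other ray. The only real difference is that the paper invokes the commutation criterion of \cite[Proposition~1]{Ro2014Sums-and-commutato} as a black box, whereas you re-derive the needed portion of it by computing $[\partial_{\rho_i,e},\partial_{\rho_j,e'}]$ on monomials; your bracket formula and the two-case analysis are correct, so this just makes the argument self-contained.
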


	\begin{proof}
		For $i=1,2$, let $\rho_i$ be an extremal ray in $\sigma$ and let $e_i \in S_{\rho_i}$.
		Then, by \cite[Proposition~1]{Ro2014Sums-and-commutato} we have the following:
		\[	
			\textrm{$\partial_{\rho_1, e_1}$, $\partial_{\rho_2, e_2}$ commute}
			\qquad \iff \qquad 
			\textrm{$\rho_1 = \rho_2$ \quad or \quad $e_1 \in \rho_2^{\perp} $ and $e_2 \in \rho_1^{\perp}$}
			\, .
		\]
		Assume $\partial_{\rho_1, e_1} \in S_{\rho_1}$ commutes with $\partial_{\rho_2, e}$
		for all $e \in S_{\rho_2}$. If $\rho_1 \neq \rho_2$, then there exists 
		$m_2 \in \sigma^\vee \cap \rho_2^{\bot} \cap M$ that doesn't belong to $\rho_1^\bot$.
		Fix $e_2 \in S_{\rho_2}$. Then $e_2 + m_2 \in S_{\rho_2}$ and $\sprod{e_2}{v_{\rho_1}} = 0$,
		and therefore $\sprod{e_2 + m_2}{v_{\rho_1}} \neq 0$, contradiction.
		This shows that $\rho_1 = \rho_2$ and thus the statement follows.
	\end{proof}

	\begin{lemma}
		\label{Lem.Interpreting_Det}
		If $e_1, \ldots, e_n \in M$ are linearly independent elements, then the integer
		$|\det(e_1, \ldots, e_n)|$ is equal to the order
		of the finite subgroup $\bigcap_{i=1}^n \ker(e_1)$ of $T$. 
	\end{lemma}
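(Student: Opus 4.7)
The plan is to combine Lemma~\ref{lem.Simultaneous_kernels} with the standard relationship between a finite abelian group and its character group, and with the Smith-normal-form computation of the index of a finite-index sublattice.

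First I would apply Lemma~\ref{lem.Simultaneous_kernels} to $e_1, \ldots, e_n$. Since these elements are linearly independent in $M_\RR$, the rank $r$ from that lemma equals $n$, so the torus factor $\GG_m^{n-r}$ is trivial and we get
\[
    K \coloneqq \bigcap_{i=1}^n \ker(e_i) \simeq F \quad \textrm{and} \quad \frak{X}(K) \simeq M / \Span_{\ZZ}\{e_1, \ldots, e_n\} \, ,
\]
with $F$ finite. In particular $K$ is finite, as asserted.

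Next I would use the fact that the character group functor $\Hom_{\grp}(-, \GG_m)$ restricts to a duality on the category of finite abelian groups (over the algebraically closed field $\kk$ of characteristic zero); in particular $|K| = |\frak{X}(K)|$, and therefore
\[
    |K| = [M : \Span_{\ZZ}\{e_1, \ldots, e_n\}] \, .
\]
Finally, it remains to identify this index with $|\det(e_1, \ldots, e_n)|$. Fixing the chosen basis of $M$, the elements $e_1, \ldots, e_n$ may be written as the columns of an $n \times n$ integer matrix $A$. Since the $e_i$ are linearly independent, $\det(A) \neq 0$, and the Smith normal form of $A$ writes it as $U \cdot \mathrm{diag}(d_1, \ldots, d_n) \cdot V$ with $U, V \in \GL_n(\ZZ)$ and positive integers $d_i$. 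The change of basis by $U$ preserves $M$, and the change of generators by $V$ preserves the sublattice; hence $[M : \Span_{\ZZ}\{e_1, \ldots, e_n\}] = \prod_i d_i = |\det A| = |\det(e_1, \ldots, e_n)|$, which is independent of the chosen basis of $M$.

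This argument is essentially a chain of well-known identifications, so no single step constitutes a serious obstacle; the only subtlety is checking that $F$ really reduces to the index as above, which follows since the linear independence forces the torus part of $K$ in Lemma~\ref{lem.Simultaneous_kernels} to vanish.
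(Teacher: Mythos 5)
Your proposal is correct and follows essentially the same route as the paper: apply Lemma~\ref{lem.Simultaneous_kernels} to identify $K$ with $M/\Span_{\ZZ}\{e_1,\ldots,e_n\}$ (using duality of finite abelian groups in characteristic zero), then equate the index of that sublattice with $|\det(e_1,\ldots,e_n)|$. The only cosmetic difference is that you justify the last step by Smith normal form, whereas the paper counts lattice points in the half-open fundamental parallelepiped via a citation to Beck--Robins.
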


	\begin{proof}
		Assume that $\{e_1, \ldots, e_n \} \subseteq M$ is
		linearly independent. Moreover, denote by $K$ the intersection in $T$
		of all the kernels $\ker(e_1), \ldots, \ker(e_n)$. By Lemma~\ref{lem.Simultaneous_kernels}
		 \begin{equation}
		 		\label{Eq.M_mod_S}
		 		M/\Span_{\ZZ} \{e_1, \ldots, e_n\} \simeq \mathfrak{X}(K) \simeq K  \, . 
		 \end{equation}
		 Let $P \subseteq M_{\RR}$ be  the \emph{half-open parallelepiped spanned} by $e_1, \ldots, e_n$ 
		 in $M_{\RR}$, i.e.
		 \[
		 	P \coloneqq \set{\sum_{i = 1}^n \lambda_i e_i \in M_{\RR}}
		 		{\textrm{$0 \leq \lambda_i < 1$ for all $i = 1, \ldots, n$}} \subseteq M_{\RR} \, .
		 \]
		 By \cite[Lemma~10.3]{BeRo2007Computing-the-cont} we have
		 $|\det(e_1, \ldots, e_n)| = \#(M \cap P)$.
		 Moreover, $\#(M \cap P)$ is equal to $\#(M / \Span_{\ZZ} \{e_1, \ldots, e_n\})$.
		 This implies the statement.
	\end{proof}

	\section{Properties of the subsets 
	\texorpdfstring{$H_{E, \rho, d}, H^{\pm}_{E, \rho}$ and $B_{E, \rho, D}$ of $S_\rho$}
	{HErd, H+-Er, BErD of S_r}}
	Throughout this section, $\rho$ denotes an extremal ray of $\sigma$.
	We gather important properties of the subsets
	$H_{E, \rho, d}, H^{\pm}_{E, \rho}$ and $B_{E, \rho, D}$ of $S_\rho$.
	These properties will be used in the proof of the main theorem.
	
	\begin{lemma}
		\label{lem.Chara_all_H_e_rho_d_finite}
		Let $E = (e_1, \ldots, e_{n-1}) \in D(X)^{n-1}$ such that
		$H \coloneqq \Span_{\RR}(E)$ is a hyperplane in $M_{\RR}$.
		Then the following statements are equivalent
		\begin{enumerate}
			\item \label{lem.Chara_all_H_e_rho_d_finite_0} 
			for all $d \geq 0$ the set $H_{E, \rho, d} \cup H_{E, \rho, -d}$ is finite;			
			\item \label{lem.Chara_all_H_e_rho_d_finite_1} 
			for all $d \in \ZZ$ the set $H_{E, \rho, d}$ is finite;
			\item \label{lem.Chara_all_H_e_rho_d_finite_3}
			there exists $d \in \ZZ$ such that $H_{E, \rho, d}$ is finite and non-empty;
			\item \label{lem.Chara_all_H_e_rho_d_finite_4}
			we have $H \cap \rho^\bot \cap \sigma^\vee = \{0\}$.
		\end{enumerate}
		If one of the above equivalent statements is satisfied
		and $\sigma^\vee$ is strongly convex, then:
		\begin{enumerate}
			\setcounter{enumi}{4}
			\item \label{lem.Chara_all_H_e_rho_d_finite_2}
			$H_{E, \rho}^{+}$ or $H_{E, \rho}^{-}$ is finite.
		\end{enumerate}
	\end{lemma}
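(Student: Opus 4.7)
The key observation is that the map $\ell \colon M_\RR \to \RR$ given by $\ell(e) = \det(e_1, \ldots, e_{n-1}, e)$ is a linear functional with $\ker \ell = H$, so that $H_{E, \rho, d}$ is the slice of $S_\rho$ by the affine hyperplane $\ell^{-1}(d)$. I plan to establish the cyclic chain of implications $(1) \iff (2) \Rightarrow (3) \Rightarrow (4) \Rightarrow (1)$ and then deduce (5) separately under the strong convexity hypothesis.

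The implications $(1) \iff (2)$ and $(2) \Rightarrow (3)$ are immediate: the latter uses only that $S_\rho \neq \varnothing$, so at least one $H_{E, \rho, d}$ is non-empty, and then (2) gives that it is finite. For $(4) \Rightarrow (1)$ I apply Lemma~\ref{lem.Finiteness_condition} with $C = H$ (which is a closed cone, being a linear subspace): the lemma yields that $(u + H) \cap S_\rho$ is finite for every $u \in M_\RR$, and since each non-empty $H_{E, \rho, d}$ equals $(m + H) \cap S_\rho$ for any chosen $m \in H_{E, \rho, d}$, statement~(1) follows.

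The substantive step is $(3) \Rightarrow (4)$, which I prove by contraposition. If $H \cap \rho^\bot \cap \sigma^\vee \neq \{0\}$, then, since all three factors are rationally defined, their intersection is a non-trivial rational polyhedral cone and hence contains a non-zero lattice vector $v \in M$. For any $e \in H_{E, \rho, d}$ and any $k \in \NN$, the translate $e + kv$ still belongs to $H_{E, \rho, d}$: the determinant is unchanged because $v \in H$, the condition $\sprod{e}{v_\rho} = -1$ is preserved because $v \in \rho^\bot$, and $e + kv \in (\sigma_\rho)^\vee \cap M$ because $v \in \sigma^\vee \subseteq (\sigma_\rho)^\vee$. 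Hence any non-empty $H_{E, \rho, d}$ is infinite, contradicting (3).

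For~(5) I exploit that, under strong convexity of $\sigma^\vee$, the face $\sigma^\vee \cap \rho^\bot$ is a pointed closed convex cone, so $(\sigma^\vee \cap \rho^\bot) \setminus \{0\}$ is path-connected: the segment between any two non-zero points of a pointed convex cone avoids the origin, as otherwise the cone would contain an opposite pair of rays. Condition~(4) forces $\ell$ to be non-vanishing on this punctured set, so by continuity $\ell$ has constant sign there; after relabelling I may assume $\ell \geq 0$ on $\sigma^\vee \cap \rho^\bot$. The closed half-space $C \coloneqq \{v \in M_\RR \mid \ell(v) \leq 0\}$ is then a closed cone with $C \cap \sigma^\vee \cap \rho^\bot = \{0\}$, so Lemma~\ref{lem.Finiteness_condition} implies that $C \cap S_\rho$ is finite, whence $H_{E, \rho}^- \subseteq C \cap S_\rho$ is finite. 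The main obstacle I anticipate is precisely this sign-constancy step: strong convexity is essential, since otherwise $\sigma^\vee \cap \rho^\bot$ could contain an entire line on which $\ell$ takes both signs, and the finiteness conclusion would break down.
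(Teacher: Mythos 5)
Your proof is correct and follows essentially the same route as the paper: $(3)\Rightarrow(4)$ by translating along a non-zero lattice vector of $H\cap\rho^\bot\cap\sigma^\vee$, $(4)\Rightarrow(1)$ by applying Lemma~\ref{lem.Finiteness_condition} to the closed cone $H$, and $(5)$ by showing the sign of $\det(e_1,\ldots,e_{n-1},\cdot)$ is constant on $(\sigma^\vee\cap\rho^\bot)\setminus\{0\}$ and applying Lemma~\ref{lem.Finiteness_condition} to the resulting half-space. Your path-connectedness argument for the sign dichotomy is just a repackaging of the paper's segment argument (both rest on the fact that a segment between two non-zero points of a pointed cone avoids the origin), so there is nothing substantively different to compare.
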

	
	\begin{proof}
		``\eqref{lem.Chara_all_H_e_rho_d_finite_0} $\Leftrightarrow$ \eqref{lem.Chara_all_H_e_rho_d_finite_1}''
		and
		``\eqref{lem.Chara_all_H_e_rho_d_finite_1} $\Rightarrow$ \eqref{lem.Chara_all_H_e_rho_d_finite_3}'':
		These statements are 
		clear, since $S_\rho = \bigcup_{d \in \ZZ} H_{E, \rho, d}$ 
		is non-empty.
		
		``\eqref{lem.Chara_all_H_e_rho_d_finite_3} $\Rightarrow$ \eqref{lem.Chara_all_H_e_rho_d_finite_4}'':
		Assume that
		$C \coloneqq H \cap \sigma^\vee \cap \rho^\bot \neq \{0\}$.
		Note that $C$ is a convex rational polyhedral cone in $M_{\RR}$. As $C$ is non-zero, by \cite[Lemma~2.5(1)]{ReSa2021Characterizing-smo} there exists a non-zero $u \in M \cap C$.
		Assume that $H_{E, \rho, d}$ is finite and non-empty. Then
		$e + t u \in H_{E, \rho, d}$ for all $t \in \ZZ_{\geq 0}$, $e \in H_{E, \rho, d}$ 
		and thus we get a contradiction to the finiteness of $H_{E, \rho, d}$.
		
		``\eqref{lem.Chara_all_H_e_rho_d_finite_4} $\Rightarrow$ \eqref{lem.Chara_all_H_e_rho_d_finite_1}'':
		Fix some $e_0 \in H_{E, \rho, d}$.  Then $H_{E, \rho, d} = (e_0 + H) \cap S_\rho$,
		since $H$ is a hyperplane in $M_{\RR}$. 
		Since $H \cap \sigma^\vee \cap \rho^\bot = \{0\}$,
		we get~\eqref{lem.Chara_all_H_e_rho_d_finite_1} from Lemma~\ref{lem.Finiteness_condition} applied
		to the closed cone $H$.
		
		``\eqref{lem.Chara_all_H_e_rho_d_finite_4} $\Rightarrow$ \eqref{lem.Chara_all_H_e_rho_d_finite_2}'':
		We assume from now on that $\sigma^\vee$ is strongly convex. Let 
		\[
		H^+ \coloneqq \set{w \in M_{\RR}}{\det(e_1, \ldots, e_{n-1}, w) \geq 0}
		\]
		and
		\[
		H^- \coloneqq \set{w \in M_{\RR}}{\det(e_1, \ldots, e_{n-1}, w) \leq 0} \, .
		\]
		As $\{e_1, \ldots, e_{n-1}\}$ is linearly independent in $M_\RR$,
		$H^+$ and $H^-$ are closed half spaces of $M_{\RR}$ such that 
		$H^+ \cap H^- = H$ and we have $H_{E, \rho}^{+} = H^+ \cap S_\rho$, 
		$H_{E, \rho}^{-} = H^- \cap S_\rho$.
		
		\begin{claim}
			\label{Claim.Dychotomy}
			$H^+ \cap \sigma^\vee \cap \rho^\bot = \{0\}$ or $H^- \cap \sigma^\vee \cap \rho^\bot = \{0\}$.
		\end{claim}
		Indeed, otherwise, there exist 
		non-zero $a^+ \in H^+ \cap \sigma^\vee \cap \rho^\bot$ and
		$a^- \in H^- \cap \sigma^\vee \cap \rho^\bot$. Hence, by~\eqref{lem.Chara_all_H_e_rho_d_finite_4}
		we have $a^+, a^- \not\in H$. Let $I \subseteq M_{\RR}$ be the affine line segment
		with end points $a^+$ and $a^-$. The convexity of $\sigma^\vee \cap \rho^\bot$ gives 
		$I \subseteq \sigma^\vee \cap \rho^\bot$.
		As $H \cap I$ is non-empty and as it is contained in $H \cap \sigma^\vee \cap \rho^\bot = \{0\}$, we
		get $I \cap H = \{0\}$. Hence, $\sigma^\vee \cap \rho^\bot$ contains the line 
		$L \coloneqq \set{\lambda u}{u \in I, \lambda \in \RR}$ in $M_\RR$.
		This contradicts the assumption that $\sigma^\vee$ is strongly convex and proves
		Claim~\ref{Claim.Dychotomy}.
	
		\medskip

		We get~\eqref{lem.Chara_all_H_e_rho_d_finite_2} from Claim~\ref{Claim.Dychotomy} and
		Lemma~\ref{lem.Finiteness_condition} applied to the closed cone $H^+$ or $H^-$ and $u = 0$.
	\end{proof}
	
	\begin{lemma}
		\label{lem.Chara_intersection_H_e_rho_d_i}
		Let  $E_i  \in D(X)^{n-1}$
		for $i = 1, \ldots, n-1$, such that  $H_i \coloneqq \Span_{\RR}(E_i)$ is a 
		hyperplane in $M_{\RR}$ and $H_i \cap \rho^\bot \cap \sigma^\vee = \{0\}$ for all
		$i$. Then the following statements are equivalent:
		\begin{enumerate}
			\item \label{lem.Chara_intersection_H_e_rho_d_i_1} 
			$\# \bigcap_{i=1}^{n-1} H_{E_i, \rho, d_i} \in \{0, 1\}$ for all $(d_1, \ldots, d_{n-1}) \in \ZZ^{n-1}$;
			\item \label{lem.Chara_intersection_H_e_rho_d_i_1.5}
			there exists $D \geq 0$ such that
			$\# \bigcap_{i=1}^{n-1} H_{E_i, \rho, d_i} \in \{0, 1\}$ for all $(d_1, \ldots, d_{n-1}) \in \ZZ^{n-1}$
			with $|d_1|, \ldots, |d_{n-1}| \geq D$;
			\item \label{lem.Chara_intersection_H_e_rho_d_i_2}
			$\bigcap_{i=1}^{n-1} H_i \cap \rho^\bot = \{0\}$.
		\end{enumerate}
	\end{lemma}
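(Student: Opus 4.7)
The plan is to prove the cycle \ref{lem.Chara_intersection_H_e_rho_d_i_1} $\Rightarrow$ \ref{lem.Chara_intersection_H_e_rho_d_i_1.5} $\Rightarrow$ \ref{lem.Chara_intersection_H_e_rho_d_i_2} $\Rightarrow$ \ref{lem.Chara_intersection_H_e_rho_d_i_1}. The first implication is immediate (take $D = 0$), so the content lies in the other two.

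For \ref{lem.Chara_intersection_H_e_rho_d_i_2} $\Rightarrow$ \ref{lem.Chara_intersection_H_e_rho_d_i_1} I would run a short linear-algebra argument: two elements $e, e'$ of a common intersection $\bigcap_i H_{E_i, \rho, d_i}$ satisfy $\sprod{e - e'}{v_\rho} = 0$ (both pair with $v_\rho$ to $-1$), so $e - e' \in \rho^\bot$, while multilinearity gives $\det(e_{i1}, \ldots, e_{i(n-1)}, e - e') = 0$; since $\dim H_i = n - 1$ forces the tuple $E_i$ to be linearly independent, this means $e - e' \in H_i$ for every $i$, and thus $e - e' \in \bigcap_i H_i \cap \rho^\bot = \{0\}$.

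The heart of the lemma is the reverse implication \ref{lem.Chara_intersection_H_e_rho_d_i_1.5} $\Rightarrow$ \ref{lem.Chara_intersection_H_e_rho_d_i_2}, which I would handle contrapositively. Assuming a nonzero element in $\bigcap_i H_i \cap \rho^\bot$ exists---necessarily rational, so I may take $v \in M$---I would construct, for any prescribed $D \geq 0$, two distinct elements of $S_\rho$ sharing all determinants $d_i$ with $|d_i| \geq D$. By Lemma~\ref{lem.span_facet} I write $v = w_1 - w_2$ with $w_1, w_2 \in \sigma^\vee \cap \rho^\bot \cap M$, so that for any $e_0 \in S_\rho$ the translates $e_0 + w_1$ and $e_0 + w_2$ both remain in $S_\rho$ (invariance of $S_\rho$ under adding $\sigma^\vee \cap \rho^\bot \cap M$) and differ by the nonzero $v$. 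To boost the determinants, I would also pick a nonzero $w \in \sigma^\vee \cap \rho^\bot \cap M$ (which exists when $n \geq 2$, since $\sigma^\vee \cap \rho^\bot$ is a facet of dimension $n - 1$; the case $n = 1$ is vacuous). The hypothesis $H_i \cap \rho^\bot \cap \sigma^\vee = \{0\}$ then forces $w \notin H_i$, hence $\det(e_{i1}, \ldots, e_{i(n-1)}, w) \neq 0$ for every $i$. Consequently the common determinants of $e_0 + Mw + w_1$ and $e_0 + Mw + w_2$ are linear in $M$ with nonzero leading coefficients and so exceed $D$ in absolute value for $M \gg 0$, contradicting \ref{lem.Chara_intersection_H_e_rho_d_i_1.5}.

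The main obstacle is the ``large $|d_i|$'' clause of \ref{lem.Chara_intersection_H_e_rho_d_i_1.5}: producing \emph{any} two distinct elements in a single intersection is essentially forced by $v \neq 0$, but simultaneously driving every one of the $n - 1$ determinants past $D$ requires a single translation direction $w$ not killed by any $\det(E_i, \cdot)$. The individual-hyperplane assumption $H_i \cap \rho^\bot \cap \sigma^\vee = \{0\}$ is therefore invoked twice---once via Lemma~\ref{lem.span_facet} to realize $v$ as a difference of $S_\rho$-admissible translations, and once to supply such a generic $w$.
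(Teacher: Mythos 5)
Your proposal is correct. The implications \eqref{lem.Chara_intersection_H_e_rho_d_i_1} $\Rightarrow$ \eqref{lem.Chara_intersection_H_e_rho_d_i_1.5} and \eqref{lem.Chara_intersection_H_e_rho_d_i_2} $\Rightarrow$ \eqref{lem.Chara_intersection_H_e_rho_d_i_1} are handled exactly as in the paper (the paper phrases the latter via cosets, $\bigcap_i H_{E_i,\rho,d_i} \subseteq (e + \bigcap_i H_i) \cap (e+\rho^\bot) = \{e\}$, which is your difference argument). For the key implication \eqref{lem.Chara_intersection_H_e_rho_d_i_1.5} $\Rightarrow$ \eqref{lem.Chara_intersection_H_e_rho_d_i_2} you take a genuinely different, and arguably cleaner, route. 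The paper also argues by contradiction with a nonzero $m_0 \in M \cap \bigcap_i H_i \cap \rho^\bot$, but it invokes Lemma~\ref{lem.Chara_all_H_e_rho_d_finite} to deduce that the set $H_D$ of points of $S_\rho$ with all $|d_i| \geq D$ is cofinite, picks $m_1 \in M$ in the \emph{relative interior} of $\sigma^\vee \cap \rho^\bot$, and chooses $t$ so that $e + tm_1 \in H_D$ while $\sprod{e+tm_1+m_0}{v_\mu} \geq 0$ for all $\mu \neq \rho$ — the relative-interior condition is what guarantees the second point $e+tm_1+m_0$ stays in $S_\rho$. You sidestep both of these devices: writing $m_0 = w_1 - w_2$ with $w_1, w_2 \in \sigma^\vee \cap \rho^\bot \cap M$ via Lemma~\ref{lem.span_facet} makes membership of both points $e_0 + Mw + w_1$ and $e_0 + Mw + w_2$ in $S_\rho$ automatic, and the observation that \emph{any} nonzero $w \in \sigma^\vee \cap \rho^\bot \cap M$ satisfies $w \notin H_i$ (by the standing hypothesis $H_i \cap \rho^\bot \cap \sigma^\vee = \{0\}$) lets you force all determinants past $D$ by explicit linear growth rather than by cofiniteness. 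The trade-off is minimal — both proofs rest on the same geometric idea of translating far into the facet and then perturbing by the nonzero vector of $\bigcap_i H_i \cap \rho^\bot$ — but your version avoids a dependency on Lemma~\ref{lem.Chara_all_H_e_rho_d_finite} and needs no relative-interior choice.
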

	
	\begin{proof}
		``\eqref{lem.Chara_intersection_H_e_rho_d_i_1} $\Rightarrow$ \eqref{lem.Chara_intersection_H_e_rho_d_i_1.5}'':
		This is clear.
		
		``\eqref{lem.Chara_intersection_H_e_rho_d_i_1.5} $\Rightarrow$ \eqref{lem.Chara_intersection_H_e_rho_d_i_2}'':
		If $n = 1$, then $\rho^\bot = \{0\}$ and hence we may assume that $n \geq 2$.
		Assume towards a contradiction that 
		the rational linear space $\bigcap_{i=1}^{n-1} H_i \cap \rho^\bot$ is non-zero. 
		Hence, there exists a non-zero $m_0 \in M \cap \bigcap_{i=1}^{n-1} H_i \cap \rho^\bot$.
		Let $e \in S_\rho$.
		
		Since $n \geq 2$, the set $S_{\rho}$ is infinite. 
		By Lemma~\ref{lem.Chara_all_H_e_rho_d_finite}, 
		for $1 \leq i \leq n-1$, 
		the sets $\bigcup_{d \colon |d| < D} H_{E_i, \rho, d}$ are finite and thus the subset
		\[
			H_D \coloneqq \bigcap_{i=1}^{n-1} 
			\bigcup_{d_i \colon |d_i| \geq D} H_{E_i, \rho, d_i}
		\]
		of $S_\rho$ is cofinite.
		Let $m_1 \in M$ be in the relative interior of $\sigma^\vee \cap \rho^\bot$. 
		Then $\sprod{m_1}{v_\mu} > 0$ for all extremal rays $\mu$ of $\sigma$ that are different from $\rho$.
		Hence, we may choose $t \in \ZZ_{\geq 0}$ such that 
		\begin{equation}
			\label{Eq.Inside_sigma_rho}
			\sprod{e + tm_1 + m_0}{v_{\mu}} \geq 0 \quad
			\textrm{for all extremal rays $\mu$ of $\sigma$ with $\mu \neq \rho$}
		\end{equation}
		and such that $e + tm_1 \in H_D$. 
		By~\eqref{lem.Chara_intersection_H_e_rho_d_i_1.5}, there exist unique 
		$d_1, \ldots, d_{n-1} \in \ZZ$ with $|d_1|, \ldots, |d_{n-1}| \geq D$
		such that 
		\begin{equation}
		\label{Eq.Singelton}
		\{e + tm_1\} = \bigcap_{i=1}^{n-1} H_{E_i, \rho, d_i} \, .
		\end{equation}
		Since $m_0 \in M \cap \bigcap_{i=1}^{n-1} H_i \cap \rho^\bot$ and using~\eqref{Eq.Inside_sigma_rho}, 
		it follows that 
		$e + tm_1 + m_0 \in \bigcap_{i=1}^{n-1} H_{E_i, \rho, d_i}$. Using that $m_0$ is non-zero,
		we arrive at a contradiction to~\eqref{Eq.Singelton}.

		``\eqref{lem.Chara_intersection_H_e_rho_d_i_2} $\Rightarrow$ \eqref{lem.Chara_intersection_H_e_rho_d_i_1}'':
		Assume that $e \in \bigcap_{i=1}^{n-1} H_{E_i, \rho, d_i}$ for some $(d_1, \ldots, d_{n-1}) \in \ZZ^{n-1}$.
		Since $H_i$ is a hyperplane, we have $H_{E_i, \rho, d_i} = (e + H_i) \cap S_{\rho}$ for all $i$. Hence,
		\[
		\{e\} \subseteq \bigcap_{i=1}^{n-1} H_{E_i, \rho, d_i} = \bigcap_{i=1}^{n-1} (e + H_i) \cap S_\rho \subseteq 
		\left( e + \bigcap_{i=1}^{n-1} H_i \right) \cap (e + \rho^\bot) 
		\xlongequal{\eqref{lem.Chara_intersection_H_e_rho_d_i_2}} \{e\} \, .
		\]
		This proves~\eqref{lem.Chara_intersection_H_e_rho_d_i_1}.
	\end{proof}

	In the next proposition we prove that there exit $E_1, \ldots, E_{n-1} \in D(X)^{n-1}$
	that satisfy the assumptions of Lemma~\ref{lem.Chara_intersection_H_e_rho_d_i}
	in case $\sigma^\vee$ is strongly convex.
	Before stating the proposition, we introduce a new term:
	
	\begin{definition}
		We call $E = (E_1, \ldots, E_{n-1}) \in (D(X)^{n-1})^{n-1}$
		an \emph{admissible system for $\rho$} if the linear spaces
		$H_i \coloneqq \Span_{\RR}(E_i)$, $i=1, \ldots, n-1$
		satisfy
		\begin{enumerate}
			\item \label{def.Admissible_System_1} 
			$H_i$ is a hyperplane in $M_{\RR}$ for all $i=1, \ldots, n-1$;
			\item \label{def.Admissible_System_2} $H_i \cap \sigma^\vee \cap \rho^\bot = \{0\}$ 
			for all $i=1, \ldots, n-1$;
			\item \label{def.Admissible_System_3} 
			$\bigcap_{i=1}^{n-1} H_i \cap \rho^\bot = \{0\}$.
		\end{enumerate}
	\end{definition}

	\begin{proposition}
		\label{prop.Existence_of_admissible_sys}
		Assume that $\sigma^\vee$ is strongly convex. Then there
		exists an admissible system for $\rho$.
	\end{proposition}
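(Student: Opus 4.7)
The plan is to build each $E_i$ entirely out of $S_\rho \subseteq D(X)$, sharing a common anchor. For any $e_0 \in S_\rho$ and any $b \in \rho^\bot \cap M$ satisfying $\langle b, v_\mu\rangle \geq -\langle e_0, v_\mu\rangle$ for every extremal ray $\mu$ of $\sigma$ with $\mu \neq \rho$, one checks directly that $e_0 + b \in S_\rho$. For each $i = 1, \ldots, n-1$ I will choose elements $b_{i,1}, \ldots, b_{i,n-2} \in \rho^\bot \cap M$ that are linearly independent in $\rho^\bot$ and obey the above bound, and set
\[
E_i := (e_0, e_0 + b_{i,1}, \ldots, e_0 + b_{i,n-2}) \in S_\rho^{n-1} \subseteq D(X)^{n-1} \, .
\]
Then $H_i := \Span_{\RR}(E_i) = \RR e_0 \oplus L_i$ with $L_i := \Span_{\RR}(b_{i,1}, \ldots, b_{i,n-2}) \subseteq \rho^\bot$; since $e_0 \notin \rho^\bot$ the sum is direct, so $H_i$ is a hyperplane of $M_\RR$ with $H_i \cap \rho^\bot = L_i$. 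The admissibility conditions~\ref{def.Admissible_System_2} and~\ref{def.Admissible_System_3} therefore reduce to
\[
L_i \cap F = \{0\} \text{ for each } i \qquad \text{and} \qquad \bigcap_{i=1}^{n-1} L_i = \{0\} \, ,
\]
where $F := \sigma^\vee \cap \rho^\bot$ is the facet of $\sigma^\vee$ dual to $\rho$.

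To produce suitable $L_i$'s, note that strong convexity of $\sigma^\vee$ is equivalent to $\sigma$ being full-dimensional in $N_\RR$, so $\inter(\sigma) \cap N$ is non-empty. I will pick $\psi \in \inter(\sigma) \cap N$ together with $n_1, \ldots, n_{n-1} \in N$ whose classes in the free $\ZZ$-module $N/\ZZ v_\rho$ of rank $n-1$ form a basis, and set $\psi_i := M\psi + n_i$ for $M \in \NN$ sufficiently large. Openness of $\inter(\sigma)$ in $N_\RR$ and the scaling invariance of the cone give $\psi_i \in \inter(\sigma) \cap N$, while the matrix-determinant lemma shows that the restrictions $\psi_i|_{\rho^\bot}$ remain linearly independent in the dual $(\rho^\bot)^*$ for all but at most one value of $M$. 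Defining $L_i := \psi_i^\bot \cap \rho^\bot$, each $L_i$ is a rational hyperplane of $\rho^\bot$. The condition $\psi_i \in \inter(\sigma)$, combined with strong convexity of $\sigma^\vee$, yields $\langle m, \psi_i\rangle > 0$ for every nonzero $m \in \sigma^\vee$, whence $L_i \cap F = \{0\}$; and the fact that $\psi_1|_{\rho^\bot}, \ldots, \psi_{n-1}|_{\rho^\bot}$ span $(\rho^\bot)^*$ forces $\bigcap_i L_i = \{0\}$.

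Finally, for each $i$ pick a $\ZZ$-basis $b_{i,1}, \ldots, b_{i,n-2}$ of the sublattice $L_i \cap M$ of rank $n-2$. Fix any $e_\ast \in S_\rho$ together with any $c \in \inter(F) \cap M$; the latter exists since $F$ is strongly convex of full dimension $n-1$ in $\rho^\bot$, and it satisfies $\langle c, v_\mu\rangle > 0$ for every extremal ray $\mu \neq \rho$. For $N \in \NN$ large enough that $N \langle c, v_\mu\rangle \geq \max_{i,j} |\langle b_{i,j}, v_\mu\rangle|$ for every such $\mu$, the element $e_0 := e_\ast + Nc$ lies in $S_\rho$ and satisfies the bound needed so that $e_0 + b_{i,j} \in S_\rho$ for all $i, j$. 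The tuple $(E_1, \ldots, E_{n-1})$ is then the desired admissible system. The most delicate point is the joint production of $\psi_1, \ldots, \psi_{n-1} \in \inter(\sigma) \cap N$ with linearly independent restrictions modulo $v_\rho$, which balances the openness of $\inter(\sigma)$ in $N_\RR$ against the requirement that the $\psi_i$ themselves lie in the lattice $N$; the scaling-plus-displacement trick $\psi_i = M\psi + n_i$ resolves this cleanly.
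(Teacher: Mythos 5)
Your proof is correct and follows essentially the same route as the paper: both arguments produce $n-1$ rational hyperplanes of $\rho^\bot$ that meet the facet $\sigma^\vee\cap\rho^\bot$ only in the origin and have trivial common intersection, pick lattice vectors spanning each of them, and push these into $S_\rho$ by adding a sufficiently deep lattice point of the relative interior of the facet to a fixed element of $S_\rho$. The only (cosmetic) difference is how the hyperplanes are obtained: the paper takes orthogonal complements of $n-1$ linearly independent integral points in the relative interior of the dual cone of the facet inside $\Hom_{\RR}(\rho^\bot,\RR)$, whereas you realize the same kind of functionals as restrictions to $\rho^\bot$ of lattice points $M\psi+n_i$ of $\inter(\sigma)$.
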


	%
	%
	%

	\begin{proof}[Proof of Proposition~\ref{prop.Existence_of_admissible_sys}]
		The dual cone of $\sigma^\vee \cap \rho^\bot$
		inside $\Hom_{\RR}(\rho^\bot, \RR)$ is of dimension $n-1$, as
		$\sigma^\vee$ is strongly convex. 
		Hence, we may choose $n-1$ linearly independent
		integral elements inside the relative interior of that dual cone
		and we denote by
		$A_1, \ldots, A_{n-1} \subseteq \rho^\bot$ the orthogonal rational hyperplanes to them
		inside $\rho^\bot$. Hence,
		we have
		\begin{enumerate}[label=\alph*)]
			\item \label{Eq.A_i_intersection_with-face} $A_i \cap \sigma^\vee = \{0\}$ for all
				$i=1, \ldots, n-1$;
			\item \label{Eq.Intersection_of_all_A_i} $\bigcap_{i=1}^{n-1} A_i =\{0\}$.
		\end{enumerate}
		Let $m_0 \in M$ be inside the relative interior of $\sigma^\vee \cap \rho^\bot$.
		For each $i=1, \ldots, n-1$ we choose linearly independent 
		$a_{i2}, \ldots, a_{i(n-1)} \in A_i \cap M$. By taking $t \in \ZZ_{\geq 0}$ large enough,
		we may assume that $tm_0 + a_{i2}, \ldots, tm_0 + a_{i(n-1)}$
		lie inside $M \cap \sigma^\vee \cap \rho^\bot$ for all $i=1, \ldots, n-1$. Now, 
		let $e_0 \in S_\rho$. Then the following elements from $S_\rho$ 
		\[
			e_{i1} \coloneqq e_0 + tm_0 \, , \ e_{i2} \coloneqq e_0 + t m_0 + a_{i2} \, , \ \ldots \, , \ 
			e_{i(n-1)} \coloneqq e_0 + t m_0 + a_{i(n-1)}
		\]
		are linearly independent in $M$.
		%
		Let $E_i \coloneqq (e_{i1}, \ldots, e_{i(n-1)}) \in D(X)^{n-1}$ for $i=1, \ldots, n-1$.
		In particular~\eqref{def.Admissible_System_1} is satisfied. Moreover, note that
		\[
			H_i \cap \rho^\bot = \Span_{\RR} \{ a_{i2}, \ldots, a_{i(n-1)}\} = A_i \, .
		\]
		Hence, \ref{Eq.A_i_intersection_with-face} and~\ref{Eq.Intersection_of_all_A_i}
		imply that~\eqref{def.Admissible_System_2} and~\eqref{def.Admissible_System_3} are satisfied.
	\end{proof}

	\section{Combinatorics of $D(X)$ and the proof of the main theorem}
	\label{sec.Combinatorics}
	
	Throughout this section, $X'$ denotes a $T'$-toric variety for some $n$-di\-men\-sional algebraic torus $T'$.
	We use the notation from the introduction 
	for $X$ and $X'$. Moreover, throughout this section,
	\[
		\Upsilon \colon D(X) \to D(X')
	\]
	denotes a bijection that satisfies 
	properties~\ref{thm.properties_bijection_Theta_1},~\ref{thm.properties_bijection_Theta_2}
	and~\ref{thm.properties_bijection_Theta_3} of 
	the main theorem. Thus after reordering $\rho'_1, \ldots, \rho'_{r'}$, $\Upsilon$ satisfies:
	\begin{enumerate}[label=\Roman*)]				
		\item \label{property.Upsilon_1} We have $r = r'$ and 
		$\Upsilon(S_{\rho_i}) = S_{\rho'_{i}}$ for all $i=1, \ldots, r$;
		\item \label{property.Upsilon_2} For all $n$-tuples $(e_1, \ldots, e_n) \in D(X)^n$
				we have
				\[
					|\det(e_1, \ldots, e_n)| = |\det(\Upsilon(e_1), \ldots, \Upsilon(e_n))| \, ;
				\]
		\item \label{property.Upsilon_3} We have
				$\rank M_X = \rank M_{X'}$ and
				for all $e \in D(X)$ we have
				\[
					\Upsilon(e + M_X) = \Upsilon(e) + M_{X'} \, .
				\]
	\end{enumerate}

	The goal of this section is to prove that under these assumptions the toric varieties 
	$X$ and $X'$ are isomorphic, i.e. the main theorem holds.
	
	\medskip
	
	We will use the following notation: If $E = (e_1, \ldots, e_{n-1}) \in D(X)^{n-1}$, then
	\[
		\Upsilon(E) \coloneqq (\Upsilon(e_1), \ldots, \Upsilon(e_{n-1})) \in D(X')^{n-1} \, .
	\]
	
	The following easy fact will be very useful in the sequel.
	\begin{lemma}
		\label{lem.H_E_rho_pmd}
		Let $E = (e_1, \ldots, e_{n-1}) \in D(X)^{n-1}$ and let $\rho$ be an extremal ray in $\sigma$.
		Assume that $\Upsilon(S_{\rho}) = S_{\rho'}$ for some extremal ray $\rho'$ of $\sigma'$.
		Then 
		\[
			\Upsilon(H_{E, \rho, d} \cup H_{E, \rho, -d}) = H_{\Upsilon(E), \rho', d} \cup H_{\Upsilon(E), \rho', -d}
			\quad \textrm{for all $d \in \ZZ$} \, .
		\]
	\end{lemma}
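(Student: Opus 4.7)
The proof is essentially a direct unpacking of the definitions combined with properties~\ref{property.Upsilon_1} and~\ref{property.Upsilon_2}. My plan is as follows.

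First, I would rewrite the union $H_{E,\rho,d} \cup H_{E,\rho,-d}$ in a way that makes the role of the absolute value of the determinant explicit. Namely, by the very definition of $H_{E,\rho,d}$, an element $e \in S_\rho$ lies in $H_{E,\rho,d} \cup H_{E,\rho,-d}$ if and only if $|\det(e_1, \ldots, e_{n-1}, e)| = |d|$. The analogous statement holds on the $X'$-side for $H_{\Upsilon(E), \rho', d} \cup H_{\Upsilon(E), \rho', -d}$.

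Next, I would verify the forward inclusion. Pick $e \in H_{E,\rho,d} \cup H_{E,\rho,-d}$. Since $e \in S_\rho$, property~\ref{property.Upsilon_1} yields $\Upsilon(e) \in S_{\rho'}$. Property~\ref{property.Upsilon_2} applied to the $n$-tuple $(e_1, \ldots, e_{n-1}, e) \in D(X)^n$ then gives
\[
|\det(\Upsilon(e_1), \ldots, \Upsilon(e_{n-1}), \Upsilon(e))| = |\det(e_1, \ldots, e_{n-1}, e)| = |d|,
\]
so $\Upsilon(e) \in H_{\Upsilon(E), \rho', d} \cup H_{\Upsilon(E), \rho', -d}$ by the reformulation above.

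Finally, the reverse inclusion follows by the same argument applied to the bijection $\Upsilon^{-1} \colon D(X') \to D(X)$, which satisfies the analogues of properties~\ref{property.Upsilon_1} and~\ref{property.Upsilon_2} by symmetry (with $\rho$ and $\rho'$ exchanged, still matched via the same permutation). Since no subtle combinatorial or topological input is needed here and no step is expected to be an obstacle, the whole proof amounts to these two short inclusions.
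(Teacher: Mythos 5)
Your proof is correct and is exactly the argument the paper intends: the paper's proof is the one-line remark that the lemma is a direct consequence of property~\ref{property.Upsilon_2}, and your unpacking (membership in $H_{E,\rho,d}\cup H_{E,\rho,-d}$ is the condition $|\det(e_1,\ldots,e_{n-1},e)|=|d|$ on $S_\rho$, preserved by $\Upsilon$, with the reverse inclusion via $\Upsilon^{-1}$) is precisely what that remark elides. The only cosmetic point is that $\Upsilon(e)\in S_{\rho'}$ comes from the lemma's own hypothesis $\Upsilon(S_\rho)=S_{\rho'}$ rather than from property~\ref{property.Upsilon_1}.
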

	
	\begin{proof}
		This is a direct consequence of~\ref{property.Upsilon_2}.
	\end{proof}
	
	\subsection{Reduction to the case when $\sigma^\vee$ is strongly convex}
	\label{Subsec.Reduction_to_strongly_convex}
	Throughout this subsection we use the notation from 
	Subsec.~\ref{subsec.Decomposition_affine_toric_variety}.
	Recall that we may write $X = X_0 \times \GG_m^s$, where $X_0$ is a  
	non-degenerate $T_0$-toric variety
	and consider the natural exact sequence $0 \to M_X \to M \stackrel{\tau}{\to} M_0 \to 0$
	of character groups. Note that
	\[
		s = \rank M_X \, .
	\]
	Note also that the extremal rays of the strongly convex cone 
	$\sigma \subseteq N$ that describes $X$ coincide  with the extremal rays of the strongly convex cone
	$\sigma_0 \subseteq N_0$ that describes $X_0$. Using that for all extremal rays $\rho \subseteq \sigma$
	the set  $S_\rho(X)$ is invariant under adding elements from $M_X$ yields thus
	\begin{equation}
		\label{Eq.S_rho(X)_S_rho(X_0)}
		S_\rho(X) = \tau^{-1}(S_{\rho}(X_0)) \, ,
	\end{equation}
	where $S_{\rho}(X)$ ($S_\rho(X_0)$) denotes the corresponding subset of $D(X)$
	($D(X_0)$), see Subsec.~\ref{subsec.Weights_root_subgroups}.
	Analogously we write $X' = X_0' \times \GG_m^{s'}$ and use analogous notations.
	
	\medskip
	
	The goal of this small subsection is to prove the following reduction result.
	Explicitly it will enable us to reduce to the case when $X$ and $X'$ are non-degenerate. 
	
	\begin{proposition}
		\label{prop.reduction_strongly_convex}
		We have $s = s'$ and
		there exists a bijection $\Upsilon_0 \colon D(X_0) \to D(X_0')$ that satisfies 
		properties~\ref{property.Upsilon_1}, \ref{property.Upsilon_2} and~\ref{property.Upsilon_3}.
	\end{proposition}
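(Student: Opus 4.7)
The plan is to define $\Upsilon_0$ by descending $\Upsilon$ through the quotient $\tau \colon M \to M_0$, and to verify properties~\ref{property.Upsilon_1}--\ref{property.Upsilon_3} one by one. The equality $s=s'$ is immediate from property~\ref{property.Upsilon_3}. Since $\Upsilon$ respects $M_X$-cosets, for any lift $\tilde e \in D(X) = \tau^{-1}(D(X_0))$ of $e \in D(X_0)$ the element $\tau'(\Upsilon(\tilde e)) \in D(X_0')$ is independent of the lift, and I would set $\Upsilon_0(e) \coloneqq \tau'(\Upsilon(\tilde e))$. The bijectivity of $\Upsilon$ together with the coset compatibility makes $\Upsilon_0$ a bijection. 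Property~\ref{property.Upsilon_1} for $\Upsilon_0$ follows directly from $\Upsilon(S_{\rho_i}(X)) = S_{\rho'_i}(X')$ combined with~\eqref{Eq.S_rho(X)_S_rho(X_0)} applied on both sides, while property~\ref{property.Upsilon_3} for $\Upsilon_0$ is vacuous since $M_{X_0} = M_{X_0'} = 0$ by non-degeneracy.

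The core of the argument is property~\ref{property.Upsilon_2} for $\Upsilon_0$. I would fix a splitting $M = M_X \oplus M_0$, a $\ZZ$-basis $f_1, \ldots, f_s$ of $M_X$, and set $n_0 \coloneqq n - s$. For each $\tilde e_1 \in D(X)$, invariance of $D(X)$ under addition of $M_X$ gives $\tilde e_1 + f_j \in D(X)$, and property~\ref{property.Upsilon_3} produces unique $m'_j \in M_{X'}$ with $\Upsilon(\tilde e_1 + f_j) = \Upsilon(\tilde e_1) + m'_j$. Given any $(e_1, \ldots, e_{n_0}) \in D(X_0)^{n_0}$ with arbitrary lifts $\tilde e_i \in D(X)$, I would apply property~\ref{property.Upsilon_2} of $\Upsilon$ to the $n$-tuple $(\tilde e_1, \ldots, \tilde e_{n_0}, \tilde e_1 + f_1, \ldots, \tilde e_1 + f_s) \in D(X)^n$. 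On the $X$-side, subtracting the first column from the last $s$ replaces them by $f_1, \ldots, f_s$, so in the splitting basis the matrix becomes, after a column-block permutation, block upper triangular with diagonal blocks $I_s$ and $(e_1, \ldots, e_{n_0})$, hence with absolute determinant $|\det(e_1, \ldots, e_{n_0})|$. The identical manipulation on the $X'$-side replaces those columns by $m'_1, \ldots, m'_s$ and yields $|\det(m'_1, \ldots, m'_s)| \cdot |\det(\Upsilon_0(e_1), \ldots, \Upsilon_0(e_{n_0}))|$. Equating the two sides via property~\ref{property.Upsilon_2} for $\Upsilon$ produces the key identity
\begin{equation*}
|\det(e_1, \ldots, e_{n_0})| = c_{\tilde e_1} \cdot |\det(\Upsilon_0(e_1), \ldots, \Upsilon_0(e_{n_0}))| \, , \qquad c_{\tilde e_1} \coloneqq |\det(m'_1, \ldots, m'_s)| \, .
\end{equation*}

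The main obstacle is then to show $c_{\tilde e_1} = 1$ for every $\tilde e_1 \in D(X)$. The case $\sigma_0 = 0$ is trivial since then $D(X_0) = \varnothing$; otherwise, set $e_1 \coloneqq \tau(\tilde e_1)$ and pick an extremal ray $\rho$ of $\sigma_0$ with $e_1 \in S_\rho(X_0)$. By Lemma~\ref{lem.span_facet} applied inside $M_0$, I can choose $a_1, \ldots, a_{n_0 - 1} \in \sigma_0^\vee \cap \rho^\bot \cap M_0$ forming a $\ZZ$-basis of $\rho^\bot \cap M_0$; the $\rho^\bot$-invariance of $S_\rho$ puts the whole tuple $(e_1, e_1 + a_1, \ldots, e_1 + a_{n_0 - 1})$ inside $S_\rho(X_0)$. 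Its $\ZZ$-span is $\ZZ e_1 + (\rho^\bot \cap M_0)$, which equals $M_0$ because $\sprod{e_1}{v_\rho} = -1$, so the absolute determinant of this tuple is $1$. Substituting into the key identity yields $1 = c_{\tilde e_1} \cdot |\det(\Upsilon_0(e_1), \ldots, \Upsilon_0(e_{n_0}))|$, and since both factors are non-negative integers they both equal $1$. Plugging $c_{\tilde e_1} = 1$ back into the key identity for an arbitrary $(e_1, \ldots, e_{n_0}) \in D(X_0)^{n_0}$ then gives property~\ref{property.Upsilon_2} for $\Upsilon_0$ and completes the proof.
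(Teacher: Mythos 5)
Your proof is correct, and while the framework (defining $\Upsilon_0$ via the commutative square with $\tau, \tau'$, and disposing of $s=s'$, property~\ref{property.Upsilon_1} and property~\ref{property.Upsilon_3} exactly as the paper does) is identical, your argument for property~\ref{property.Upsilon_2} genuinely differs from the paper's. The paper proves Lemma~\ref{lem.determinant_in_X_0}, which characterizes $|\det(e_1^0,\ldots,e_{n-s}^0)|$ intrinsically as the \emph{minimum} of $|\det(e_1,\ldots,e_n)|$ over all linearly independent lifts; since the commutative diagram and property~\ref{property.Upsilon_2} for $\Upsilon$ identify the two sets over which the minima are taken, invariance is immediate and no normalization issue ever arises. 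You instead work with one specific augmented tuple $(\tilde e_1,\ldots,\tilde e_{n_0},\tilde e_1+f_1,\ldots,\tilde e_1+f_s)$, which produces the relation $|\det(e_1,\ldots,e_{n_0})| = c_{\tilde e_1}\,|\det(\Upsilon_0(e_1),\ldots,\Upsilon_0(e_{n_0}))|$ and forces you to prove $c_{\tilde e_1}=1$; you do this correctly by exhibiting a unimodular tuple $(e_1, e_1+a_1,\ldots,e_1+a_{n_0-1})$ inside $S_\rho(X_0)$ and using that both factors are non-negative integers. Your route is more hands-on and makes the role of the block structure of the determinant explicit, at the cost of needing extra geometric input about $S_\rho$ (the unimodular tuple); the paper's minimum trick is slicker because it is manifestly symmetric in $\Upsilon$ and $\Upsilon^{-1}$ and needs no normalization. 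One small point to tighten: Lemma~\ref{lem.span_facet} only says that $\sigma_0^\vee\cap\rho^\bot\cap M_0$ \emph{generates} $\rho^\bot\cap M_0$ as a group, not that it contains a $\ZZ$-basis of it. The stronger statement is true (the cone $\sigma_0^\vee\cap\rho^\bot$ is full-dimensional in $\rho^\bot$, so one can take a primitive interior lattice point $u_1$ and lifts $w_i+t_iu_1$ of a basis of the quotient lattice for $t_i\gg 0$), or you can avoid it entirely by noting that $c_{\tilde e_1}$ divides the gcd of all the determinants $|\det(e_1,e_1+a_1,\ldots,e_1+a_{n_0-1})|$ with $a_i\in\sigma_0^\vee\cap\rho^\bot\cap M_0$, and this gcd is $1$ because these elements generate $M_0$; either way the gap is minor and fillable.
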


	For the proof we will need the following lemma:
	
	\begin{lemma}
		\label{lem.determinant_in_X_0}
		If $e_1^0, \ldots, e_{n-s}^0 \in D(X_0)$
		are linearly independent in $M_0$, then
		\[
			 |\det(e_1^0, \ldots, e_{n-s}^0)| 
			= \min \Bigset{|\det(e_1, \ldots, e_n)|}{
				\begin{array}{l}
					\textrm{$e_1, \ldots, e_n \in D(X)$ are} \\
					\textrm{linearly independent in $M$} \\
					\textrm{and $\{\tau(e_1), \ldots, \tau(e_n) \}$} \\
					\textrm{$=\{e^0_1, \ldots, e^0_{n-s} \}$}
				\end{array}
			} \, .
		\]
	\end{lemma}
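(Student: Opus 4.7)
The plan is to fix a splitting $M = M_0 \oplus M_X$ compatible with the exact sequence of Subsection~\ref{subsec.Decomposition_affine_toric_variety} and to choose a basis of $M$ adapted to it (a basis of $M_0$ followed by a basis of $M_X$). Every $e \in M$ then takes the form $e = (\tau(e), \pi(e))$ with $\pi(e) \in M_X$, and the key input is the identity $D(X) = \tau^{-1}(D(X_0))$ recalled in Subsection~\ref{subsec.Weights_root_subgroups}, which leaves the $M_X$-component completely free. The two inequalities will then follow from the same block-triangular determinant manipulation.

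For the upper bound I would exhibit an explicit $n$-tuple realising the value $|\det(e_1^0, \ldots, e_{n-s}^0)|$. Fix a basis $f_1, \ldots, f_s$ of $M_X$ and set $e_i \coloneqq (e_i^0, 0)$ for $i = 1, \ldots, n-s$ and $e_{n-s+j} \coloneqq (e_1^0, f_j)$ for $j = 1, \ldots, s$; all of these lie in $D(X)$ thanks to $D(X) = \tau^{-1}(D(X_0))$. Subtracting the first column of the associated $n \times n$ matrix from each of the last $s$ columns produces a block upper-triangular matrix, so
\[
|\det(e_1, \ldots, e_n)| = |\det(e_1^0, \ldots, e_{n-s}^0)| \cdot |\det(f_1, \ldots, f_s)| = |\det(e_1^0, \ldots, e_{n-s}^0)| \, .
\]

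For the lower bound, take any linearly independent $n$-tuple $e_1, \ldots, e_n \in D(X)$ with $\{\tau(e_1), \ldots, \tau(e_n)\} = \{e_1^0, \ldots, e_{n-s}^0\}$. By the set-equality hypothesis, after reordering I may assume $\tau(e_i) = e_i^0$ for $i = 1, \ldots, n-s$ and $\tau(e_{n-s+j}) = e_{k_j}^0$ for some $k_j \in \{1, \ldots, n-s\}$ and $j = 1, \ldots, s$. Writing $e_i = (\tau(e_i), b_i)$ with $b_i \in M_X$ and subtracting column $k_j$ from column $n-s+j$ for each $j$ brings the matrix into block upper-triangular form, whence
\[
|\det(e_1, \ldots, e_n)| = |\det(e_1^0, \ldots, e_{n-s}^0)| \cdot |\det(b_{n-s+1} - b_{k_1}, \ldots, b_n - b_{k_s})| \, .
\]
The second factor is a non-negative integer, and it is non-zero because otherwise the $n$-tuple $(e_1, \ldots, e_n)$ would be linearly dependent in $M$; hence it is at least $1$, proving the inequality. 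The argument is essentially a linear-algebra manipulation, so there is no substantive obstacle; the only point requiring a moment's care is verifying that the set-equality hypothesis genuinely allows the reordering into a ``repeat first $n-s$ entries, then duplicates'' pattern and that $D(X) = \tau^{-1}(D(X_0))$ really imposes no restriction on the $M_X$-component in the explicit construction.
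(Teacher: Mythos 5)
Your proposal is correct and follows essentially the same route as the paper: fix a splitting $M = M_0 \oplus M_X$, exhibit an explicit tuple achieving the value via a block-triangular determinant, and for the reverse inequality reorder a general tuple and perform column operations so that the determinant factors as $|\det(e_1^0,\ldots,e_{n-s}^0)|$ times a nonzero integer. The only differences are cosmetic (order of the blocks, and phrasing the reordering via a surjection $\eta$ and a permutation as the paper does versus directly).
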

	
	\begin{proof}
		We choose a splitting $M = M_X \oplus M_0$
		and identify $M_X = \ZZ^s$, $M_0 = \ZZ^{n-s}$. Then we get
		\[
		|\det(e_1^0, \ldots, e_{n-s}^0)| =
		\left|\det
		\begin{pmatrix}
		I_s & 0 \\
		\ast & e_1^0, \ldots, e_{n-s}^0
		\end{pmatrix} \right| \, ,
		\]
		where $I_s \in \GL_s(\ZZ)$ denotes the identity matrix. 
		This shows ``$\geq$‘‘ in the statement.
		
		Now, let $e_1, \ldots, e_n \in D(X)$ be linearly independent in $M$ such that
		\[
		\{\tau(e_1), \ldots, \tau(e_n) \} = \{e^0_1, \ldots, e^0_{n-s} \} \, .
		\] 
		Then there exist $a_1, \ldots, a_n \in M_X$ and a surjection
		$\eta \colon \{ 1, \ldots, n \} \to \{ 1, \ldots, n-s \}$ such that
		$e_i = a_i + e^0_{\eta(i)}$ for all $1 \leq i \leq n$.
		Applying a permutation $\varepsilon$ of $\{1, \ldots, n\}$ such that
		$\eta(\varepsilon(s+1)) =1, \ldots, \eta(\varepsilon(n)) = n-s$ gives us
		$b_1, \ldots, b_s \in M_X$ with
		\begin{eqnarray*}
			|\det(e_1, \ldots, e_n)| &=&
			\left|\det
			\begin{pmatrix}
				a_{\varepsilon(1)} & \ldots & a_{\varepsilon(s)} & a_{\varepsilon(s+1)} & \ldots & a_{\varepsilon(n)} \\
				e_{(\eta\circ \varepsilon)(1)}^0 & \ldots & e_{(\eta \circ \varepsilon)(s)}^0 &
				e_1^0 & \ldots & e_{n-s}^0
			\end{pmatrix} \right| \\
			&=& \left|\det
			\begin{pmatrix}
				b_1 & \ldots & b_s & a_{\varepsilon(s+1)} & \ldots & a_{\varepsilon(n)} \\
				0 & \ldots & 0 & e_1^0 & \ldots & e_{n-s}^0
			\end{pmatrix} \right| \\
			&\geq& |\det(e_1^0, \ldots, e_{n-s}^0)| \, ,
		\end{eqnarray*}
		where we used for the last inequality that $e_1, \ldots, e_n$ are linearly independent in $M$.
		Hence, ``$\leq$‘‘ holds as well in the statement.
	\end{proof}

	\begin{proof}[Proof of Proposition~\ref{prop.reduction_strongly_convex}]
		The first statement follows from property~\ref{property.Upsilon_3} for $\Upsilon$, as
		$s = \rank M_X = \rank M_{X'} = s'$.
		
		Again by property~\ref{property.Upsilon_3} for $\Upsilon$ and using that $D(X) = \tau^{-1}(D(X_0))$
		we get a well-defined bijection
		$\Upsilon_0 \colon D(X_0) \to D(X'_0)$ such that the following diagram commutes:
		\begin{equation}
			\begin{gathered}
			\label{Eq.Commutative_diagram_Upsilon}
			\xymatrix@=15pt{
				D(X) \ar[d]_-{\Upsilon} \ar[r]^-{\tau} & D(X_0) \ar[d]^-{\Upsilon_0} \\
				D(X') \ar[r]^-{\tau'} & D(X'_0) \, . \\
			}
			\end{gathered}
		\end{equation}
		Using~\eqref{Eq.S_rho(X)_S_rho(X_0)} and the commutativity of~\eqref{Eq.Commutative_diagram_Upsilon} 
		we get now property~\ref{property.Upsilon_1} for $\Upsilon_0$.
		
		Let $e^0_1, \ldots, e^0_{n-s} \in D(X_0)$.
		Again using the commutativity of~\eqref{Eq.Commutative_diagram_Upsilon} 
		gives us for all $(e_1, \ldots, e_n) \in D(X)^n$
		that
		\[
					\begin{array}{c}
							\textrm{$\{\tau(e_1), \ldots, \tau(e_n) \}$} \\
							\textrm{$=\{e^0_1, \ldots, e^0_{n-s} \}$}
					\end{array}
					\quad
					\iff
					\quad
					\begin{array}{c}
						 	\textrm{$\{\tau'(\Upsilon(e_1)), \ldots, \tau'(\Upsilon(e_n)) \}$} \\
							\textrm{$=\{\Upsilon_0(e^0_1), \ldots, \Upsilon_0(e^0_{n-s}) \}$}
					\end{array}					
		\]
		and by property~\ref{property.Upsilon_2} for $\Upsilon$ we have that
		$e_1, \ldots, e_n$ are linearly independent if and only if 
		$\Upsilon(e_1), \ldots, \Upsilon(e_n)$ are linearly independent.
		If $e^0_1, \ldots, e^0_{n-s}$ are linearly independent in $M_0$,
		then by Lemma~\ref{lem.determinant_in_X_0} 
		\[
			|\det(e_1^0, \ldots, e_{n-s}^0)| =
			|\det(\Upsilon_0(e_1^0), \ldots, \Upsilon_0(e_{n-s}^0))| 
		\]
		and in particular, $\Upsilon_0(e_1^0), \ldots, \Upsilon_0(e_{n-s}^0)$ are linearly
		independent in $M'_0$. Using the same argument for $\Upsilon^{-1}$
		gives us property~\ref{property.Upsilon_2} for $\Upsilon_0$.
		Property~\ref{property.Upsilon_3} is trivially satisfied for $\Upsilon_0$, as $M_{X_0}$ and $M_{X'_0}$
		are both trivial.
	\end{proof}
	
	\subsection{The case when $\sigma^\vee$ is strongly convex}
	\label{Subsec.The_strongly_convex_case}

	In order to prove the main theorem, by using
 	Proposition~\ref{prop.reduction_strongly_convex},
	we have only to consider the case when
	$X$ and $X'$ are non-degenerate, i.e.~$\sigma^\vee$ and $(\sigma')^\vee$ are 
	strongly convex. Note that $\sigma^\vee$ is strongly convex if and only if 
	$(\sigma')^\vee$ is strongly convex by property~\ref{property.Upsilon_3}.
	
	\medskip
	
	We start with a consequence of Lemma~\ref{lem.H_E_rho_pmd} that says that the
	subsets $H_{E, \rho, d}$ are preserved up to the sign of $d$ for large $d$, when 
	$\Span_{\RR}(E)$ intersects $\sigma^\vee \cap \rho^\bot$ only in the origin. This can be seen as
	an ``improvement of Lemma~\ref{lem.H_E_rho_pmd} up to a finite error''.
	
	\begin{corollary}
		\label{cor.H_E_rho_D_preserved}
		Assume that $\sigma^\vee$ is strongly convex.
		Let $\rho \subseteq \sigma$
		be an extremal ray and let $E = (e_1, \ldots, e_{n-1}) \in D(X)^{n-1}$
		such that $H \coloneqq \Span_{\RR}(E)$ 
		is a hyperplane in $M_{\RR}$ that
		satisfies 
		\begin{equation}
			\label{Eq.Assumption_for_the_hyperplane}
			H \cap \sigma^{\vee} \cap \rho^{\bot} = \{0\} \, .
		\end{equation}
		Assume that $\Upsilon(S_{\rho}) = S_{\rho'}$ for some extremal ray $\rho'$ of $\sigma'$.
		Then there exists $\varepsilon \in \{\pm1\}$ and $D \geq 0$ such that
		\[
			\Upsilon(H_{E, \rho, d}) = H_{\Upsilon(E), \rho', \varepsilon d} \quad
			\textrm{for all $d \in \ZZ$ with $|d| \geq D$} \, .
		\]
	\end{corollary}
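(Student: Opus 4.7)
The plan is to bootstrap from Lemma~\ref{lem.H_E_rho_pmd}, which already furnishes the weak pair identity $\Upsilon(H_{E,\rho,d}\cup H_{E,\rho,-d})=H_{\Upsilon(E),\rho',d}\cup H_{\Upsilon(E),\rho',-d}$, by invoking the dichotomy~\eqref{lem.Chara_all_H_e_rho_d_finite_2} of Lemma~\ref{lem.Chara_all_H_e_rho_d_finite}. Morally, once $|d|$ is large exactly one member of each of the two pairs $\{H_{E,\rho,\pm d}\}$ and $\{H_{\Upsilon(E),\rho',\pm d}\}$ is empty, so the pair identity collapses to an equality between the two ``non-empty sides''; the global sign $\varepsilon\in\{\pm1\}$ then just records whether these non-empty sides correspond via $\Upsilon$ with aligned or reversed signs.

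Before invoking~\eqref{lem.Chara_all_H_e_rho_d_finite_2} on the primed side, I must check that $\Upsilon(E)$ again satisfies the hypotheses of Lemma~\ref{lem.Chara_all_H_e_rho_d_finite} at $\rho'$. That $H':=\Span_{\RR}(\Upsilon(E))$ is a hyperplane in $M'_{\RR}$ is obtained from property~\ref{property.Upsilon_2}: strong convexity of $\sigma^\vee$ combined with Lemma~\ref{lem.span_facet} (applied after fixing some $e_0\in S_\rho$) gives $\Span_{\RR}(S_\rho)=M_{\RR}$, so $E$ can be completed to a basis of $M_{\RR}$ by some $e_n\in D(X)$, and $|\det|$-preservation propagates this linear independence to $\Upsilon(E)\cup\{\Upsilon(e_n)\}$. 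For the transversality $H'\cap(\sigma')^\vee\cap(\rho')^\bot=\{0\}$, I would fix any $e\in S_\rho$ and set $d:=\det(e_1,\ldots,e_{n-1},e)$: then $H_{E,\rho,d}$ is non-empty and, by the implication \eqref{lem.Chara_all_H_e_rho_d_finite_4}$\Rightarrow$\eqref{lem.Chara_all_H_e_rho_d_finite_1} of Lemma~\ref{lem.Chara_all_H_e_rho_d_finite}, finite; Lemma~\ref{lem.H_E_rho_pmd} pushes this to $H_{\Upsilon(E),\rho',d}\cup H_{\Upsilon(E),\rho',-d}$, and the converse \eqref{lem.Chara_all_H_e_rho_d_finite_3}$\Rightarrow$\eqref{lem.Chara_all_H_e_rho_d_finite_4} concludes.

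With~\eqref{lem.Chara_all_H_e_rho_d_finite_2} now licensed on both sides, let $\varepsilon_E,\varepsilon'\in\{+,-\}$ record the signs for which $H_{E,\rho}^{\varepsilon_E}$ and $H_{\Upsilon(E),\rho'}^{\varepsilon'}$ are finite, and choose $D\geq 0$ so that $H_{E,\rho,d}=\varnothing$ whenever $d$ has sign $\varepsilon_E$ with $|d|\geq D$, and similarly on the primed side. For every $d$ with $|d|\geq D$ the pair identity of Lemma~\ref{lem.H_E_rho_pmd} then reduces one-sidedly to the desired equality $\Upsilon(H_{E,\rho,d})=H_{\Upsilon(E),\rho',\varepsilon d}$ with $\varepsilon:=+1$ if $\varepsilon_E=\varepsilon'$ and $\varepsilon:=-1$ otherwise, handled uniformly over the four sign cases. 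The main obstacle I anticipate is precisely the preliminary transfer step in the second paragraph, i.e.\ propagating hyperplane-ness and transversality through $\Upsilon$ so that Lemma~\ref{lem.Chara_all_H_e_rho_d_finite}\eqref{lem.Chara_all_H_e_rho_d_finite_2} applies to $\Upsilon(E)$; once that is secured the sign bookkeeping is essentially automatic.
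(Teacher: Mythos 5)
Your proposal is correct and follows essentially the same route as the paper: verify via property~\ref{property.Upsilon_2}, Lemma~\ref{lem.H_E_rho_pmd} and the equivalences of Lemma~\ref{lem.Chara_all_H_e_rho_d_finite} that $\Upsilon(E)$ spans a hyperplane meeting $(\sigma')^\vee\cap(\rho')^\bot$ only in the origin, then use the dichotomy~\eqref{lem.Chara_all_H_e_rho_d_finite_2} on both sides to pick signs $\delta,\delta'$ and a threshold $D$ beyond which one member of each pair is empty, so the pair identity collapses to the signed equality with $\varepsilon=\delta'/\delta$. Your extra detail on completing $E$ to a basis inside $D(X)$ to transfer hyperplane-ness is a correct elaboration of what the paper asserts directly from property~\ref{property.Upsilon_2}.
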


	\begin{proof}
		Let $H' \coloneqq \Span_{\RR}( \Upsilon(E) )$. Since $H$ is a hyperplane,
		it follows from property~\ref{property.Upsilon_2} that $H'$ is a hyperplane as well.
		Moreover,
		\begin{eqnarray*}
			H \cap \sigma^{\vee} \cap \rho^{\bot} = \{0\} &\xLeftrightarrow{\textrm{Lem.~\ref{lem.Chara_all_H_e_rho_d_finite}}}&
			\textrm{$H_{E, \rho, d} \cup H_{E, \rho, -d}$ is finite $\forall \, d \geq 0$} \\
			&\xLeftrightarrow{\textrm{Lem.~\ref{lem.H_E_rho_pmd}}}& \textrm{$H_{\Upsilon(E), \rho', d} \cup H_{\Upsilon(E), \rho', -d}$ 
			is finite $\forall \, d \geq 0$} \\
			&\xLeftrightarrow{\textrm{Lem.~\ref{lem.Chara_all_H_e_rho_d_finite}}}&
			H' \cap (\sigma')^{\vee} \cap (\rho')^{\bot} = \{0\} \, .
		\end{eqnarray*}
	 	As $\sigma^\vee$ is strongly convex, by Lemma~\ref{lem.Chara_all_H_e_rho_d_finite} there exist $\delta, \delta' \in \{\pm1\}$ such that
	 	\[
	 		\# H_{E, \rho}^{\textrm{sign}(\delta)} < \infty \quad \textrm{and} \quad 
			\# H_{\Upsilon(E), \rho'}^{\textrm{sign}(\delta')} < \infty  \, . 
	 	\] 
	 	Hence, there exists $D \geq 0$ such that $H_{E, \rho, \delta d} = H_{\Upsilon(E), \rho, \delta' d} = \varnothing$
	 	for all $d \geq D$. Moreover, by using Lemma~\ref{lem.H_E_rho_pmd}, we get now
	 	for all $d \geq D$ that
	 	\begin{eqnarray*}
	 		\Upsilon(H_{E, \rho, -\delta d}) &=& \Upsilon(H_{E, \rho, -\delta d} \cup H_{E, \rho, \delta d}) \\
	 		&=& H_{\Upsilon(E), \rho', -\delta d} \cup H_{\Upsilon(E), \rho', \delta d} 
	 		= H_{\Upsilon(E), \rho', -\delta' d} \, .
	 	\end{eqnarray*}
	 	This gives the statement of the corollary by letting $\varepsilon \coloneqq \delta'/\delta \in \{\pm1\}$.
 	\end{proof}

 	\begin{proposition}
 	\label{prop.Theta_and_the_facets}
 	Assume that $\sigma^\vee$ is strongly convex.
 	Let $\rho$ be an extremal ray of $\sigma$ and let
 	$E = (E_1, \ldots, E_{n-1}) \in (D(X)^{n-1})^{n-1}$
 	be an admissible system for $\rho$. 
 	If $\Upsilon(S_{\rho}) = S_{\rho'}$ for some extremal ray $\rho'$ of $\sigma'$, then
 	there exists $D \geq 0$ such that the following holds:
 	\begin{enumerate}
 		\item \label{prop.Theta_and_the_facets_0}
 			$B_{E, \rho, D} + (M \cap \sigma^\vee \cap \rho^\bot) \subseteq B_{E, \rho, D}$;
  		\item \label{prop.Theta_and_the_facets_i} 
  			  for all $e, f \in B_{E, \rho, D}$ and all $m \in M \cap \sigma^\vee \cap \rho^\bot$ we have:
  			  \[
  					\Upsilon(e + m) - \Upsilon(e) = \Upsilon(f + m) - \Upsilon(f) \, ;
  			  \] 
 		\item \label{prop.Theta_and_the_facets_ii}
 			  for all $e \in B_{E, \rho, D}$ we have
 			  \[
 			  		\Upsilon(e + M \cap \sigma^\vee \cap \rho^\bot) = 
 			  		\Upsilon(e) + M' \cap (\sigma')^\vee \cap (\rho')^\bot \, .
 			  \]	  
 	\end{enumerate}
 	\end{proposition}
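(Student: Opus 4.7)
The plan is to use Corollary~\ref{cor.H_E_rho_D_preserved} to pin down the values of $\Upsilon$ on cosets $e + (M \cap \sigma^\vee \cap \rho^\bot)$ whenever $e$ lies deep inside $S_\rho$. As a preliminary step I would verify that $\Upsilon(E) = (\Upsilon(E_1), \ldots, \Upsilon(E_{n-1}))$ is itself an admissible system for $\rho'$: applying Corollary~\ref{cor.H_E_rho_D_preserved} to each $E_i$ gives signs $\varepsilon_i \in \{\pm 1\}$ and integers $D_i \geq 0$ with $\Upsilon(H_{E_i, \rho, d}) = H_{\Upsilon(E_i), \rho', \varepsilon_i d}$ for $|d| \geq D_i$; the three defining conditions for admissibility of $\Upsilon(E)$ then follow respectively from property~\ref{property.Upsilon_2}, from Lemma~\ref{lem.H_E_rho_pmd} together with Lemma~\ref{lem.Chara_all_H_e_rho_d_finite}, and from Corollary~\ref{cor.H_E_rho_D_preserved} together with Lemma~\ref{lem.Chara_intersection_H_e_rho_d_i}.

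Next, for each $i$ I introduce the linear form $\delta_i(v) = \det(e_{i1}, \ldots, e_{i(n-1)}, v)$ on $M_{\RR}$, with kernel $H_i = \Span_{\RR}(E_i)$. By admissibility $\delta_i$ does not vanish on the pointed convex cone $(\sigma^\vee \cap \rho^\bot) \setminus \{0\}$ and therefore has a constant sign $\epsilon_i \in \{\pm 1\}$ there. By Lemma~\ref{lem.Chara_all_H_e_rho_d_finite}\ref{lem.Chara_all_H_e_rho_d_finite_2} at least one of $H_{E_i, \rho}^{\pm}$ is finite, and translating any $e \in S_\rho$ by large integer multiples of a nonzero $m \in M \cap \sigma^\vee \cap \rho^\bot$ shows that it must be $H_{E_i, \rho}^{-\epsilon_i}$. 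I take $D \geq \max_i D_i$ large enough that $|\delta_i(e)| < D$ for every $e$ in this finite piece and every $i$; then $e \in B_{E, \rho, D}$ forces $\delta_i(e) = \epsilon_i |\delta_i(e)|$, and $\delta_i(e+m) = \delta_i(e) + \delta_i(m)$ remains of sign $\epsilon_i$ with absolute value at least $|\delta_i(e)| \geq D$ for every $m \in M \cap \sigma^\vee \cap \rho^\bot$, proving~\ref{prop.Theta_and_the_facets_0}. Moreover, Corollary~\ref{cor.H_E_rho_D_preserved} yields $\delta'_i(\Upsilon(e)) = \varepsilon_i \delta_i(e)$ on $B_{E, \rho, D}$, where $\delta'_i$ is the analogous form attached to $\Upsilon(E_i)$; hence
\[
\delta'_i\bigl(\Upsilon(e+m) - \Upsilon(e)\bigr) = \varepsilon_i \delta_i(m) \, ,
\]
a quantity independent of $e$. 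Since $\Upsilon(e+m), \Upsilon(e) \in S_{\rho'}$ both pair to $-1$ with $v_{\rho'}$, their difference lies in $(\rho')^\bot$, and admissibility of $\Upsilon(E)$ makes $v \mapsto (\delta'_1(v), \ldots, \delta'_{n-1}(v))$ injective on $(\rho')^\bot$, so $\Upsilon(e+m) - \Upsilon(e)$ itself depends only on $m$, which is~\ref{prop.Theta_and_the_facets_i}.

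For~\ref{prop.Theta_and_the_facets_ii}, set $\psi(m) \coloneqq \Upsilon(e+m) - \Upsilon(e)$ for $m \in M \cap \sigma^\vee \cap \rho^\bot$ and any $e \in B_{E, \rho, D}$, which is well-defined and additive by~\ref{prop.Theta_and_the_facets_i}. Iterating, $\Upsilon(e + km) = \Upsilon(e) + k\psi(m)$ for every $k \in \ZZ_{\geq 0}$, and since the left-hand side belongs to $S_{\rho'} \subseteq (\sigma'_{\rho'})^\vee$, the pairing $\sprod{\Upsilon(e) + k\psi(m)}{v_\mu}$ stays non-negative for every extremal ray $\mu \neq \rho'$ of $\sigma'$ and every $k \geq 0$; letting $k \to \infty$ forces $\sprod{\psi(m)}{v_\mu} \geq 0$, and together with $\psi(m) \in (\rho')^\bot$ this gives $\psi(m) \in M' \cap (\sigma')^\vee \cap (\rho')^\bot$, hence the inclusion ``$\subseteq$'' in~\ref{prop.Theta_and_the_facets_ii}. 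For the reverse inclusion I would rerun the whole plan with $(\Upsilon^{-1}, \Upsilon(E), \rho')$ in place of $(\Upsilon, E, \rho)$ to produce an additive $\psi^* \colon M' \cap (\sigma')^\vee \cap (\rho')^\bot \to M \cap \sigma^\vee \cap \rho^\bot$; choosing $D$ also large enough so that $\Upsilon(B_{E, \rho, D}) \subseteq B_{\Upsilon(E), \rho', D}$ (which is automatic from $|\delta'_i(\Upsilon(e))| = |\delta_i(e)|$), a direct unfolding of the definitions then verifies $\psi^* \circ \psi = \id$ and $\psi \circ \psi^* = \id$, making $\psi$ a bijection between the two semi-groups. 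The main obstacle I expect is precisely this upgrade from $(\sigma'_{\rho'})^\vee$ to $(\sigma')^\vee$ in the cone condition on $\psi(m)$: the containment $\Upsilon(e+m) \in S_{\rho'}$ alone only gives the weaker membership, and the unbounded-scaling identity $\Upsilon(e+km) = \Upsilon(e) + k\psi(m)$, itself dependent on the admissibility of $E$, is what makes the argument go through.
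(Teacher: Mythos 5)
Your proposal is correct and takes essentially the same route as the paper: establish that $\Upsilon(E)$ is admissible for $\rho'$, use Corollary~\ref{cor.H_E_rho_D_preserved} to get the coordinate identity $\delta_i'(\Upsilon(e))=\varepsilon_i\delta_i(e)$ on $B_{E,\rho,D}$, deduce~\eqref{prop.Theta_and_the_facets_i} from the injectivity of $v\mapsto(\delta_1'(v),\ldots,\delta_{n-1}'(v))$ on $(\rho')^\bot$, and obtain~\eqref{prop.Theta_and_the_facets_ii} by telescoping plus symmetry in $\Upsilon^{-1}$. The only cosmetic differences are that you prove~\eqref{prop.Theta_and_the_facets_0} via the constant sign of $\delta_i$ on the pointed cone where the paper invokes Remark~\ref{rem.Big_es}, and you re-derive Lemma~\ref{lem.spcecial_cone_in_S_rho} inline via the pairings $\sprod{\cdot}{v_\mu}$ instead of citing it.
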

 	
 	For the proof of this proposition, the following remark will be essential:
 	
 	\begin{remark}
	\label{rem.Big_es}
	Let $\rho \subseteq \sigma$ be an extremal ray and let $E = (e_1, \ldots, e_{n-1}) \in D(X)^{n-1}$
	such that $H \coloneqq \Span_{\RR}(E)$ is a hyperplane in $M_{\RR}$
	and assume that there exist $\delta \in \{\pm1\}$ and $D \geq 0$ such that 
	$H_{E, \rho, -\delta d} = \varnothing$ for all $d \geq D$. Then we have for all $e \in S_\rho$
	and all $m \in M \cap \sigma^\vee \cap \rho^\bot$:
	\[
		|\det(e_{1}, \ldots, e_{n-1}, e)| \geq D \quad \implies \quad
		|\det(e_{1}, \ldots, e_{n-1}, e+m)| \geq D
	\]
	Indeed, we may assume without loss of generality that $\delta = +1$. Now,
	if $e \in S_\rho$ and $|\det(e_{1}, \ldots, e_{n-1}, e)| \geq D$, then 
	$\det(e_{1}, \ldots, e_{n-1}, e) \geq D$. If $\det(e_{1}, \ldots, e_{n-1}, m) < 0$,
	then $\det(e_{1}, \ldots, e_{n-1}, e+tm) \to -\infty$ for $t \to \infty$.
	However, $e + tm \in S_\rho$ for all $t \in \ZZ_{\geq 0}$ and thus we get a contradiction to
	$H_{E, \rho, -d} = \varnothing$ for all $d \geq D$.
	Hence, $\det(e_{1}, \ldots, e_{n-1}, m) \geq 0$ and thus $\det(e_{1}, \ldots, e_{n-1}, e + m) \geq D$.
 	\end{remark}
 
 	Moreover, the following description of $\sigma^\vee \cap \rho^\bot \cap M$ in terms of
 	$S_\rho$ will be used for the proof of Proposition~\ref{prop.Theta_and_the_facets}.
 	Recall that for a subset $S \subseteq M_{\RR}$, the set 
 	$S_\infty$ denotes the asymptotic cone in $M_{\RR}$, see 
	Sec.~\ref{subsec.Affine_toric_varieties}.
 	
 	\begin{lemma}
 		\label{lem.spcecial_cone_in_S_rho}
 		Let $\rho$ be an extremal ray in $\sigma$. Then, for all $e \in S_\rho$ we have
 		\[
 		M \cap \sigma^\vee \cap \rho^\bot = 
 		\set{m \in M}{\textrm{$e + tm \in S_\rho$ for all $t \in \ZZ_{\geq 0}$} } \, .
 		\]
 	\end{lemma}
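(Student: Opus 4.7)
The plan is to prove the two inclusions separately; both are direct verifications using the definition $S_\rho = \{m \in (\sigma_\rho)^\vee \cap M : \langle m, v_\rho\rangle = -1\}$ and the fact that $\sigma^\vee = \{m \in M_\RR : \langle m, v_\mu \rangle \geq 0 \text{ for every extremal ray } \mu \text{ of } \sigma\}$.

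For the inclusion $\subseteq$, I would fix $m \in M \cap \sigma^\vee \cap \rho^\bot$ and $e \in S_\rho$, and show $e + tm \in S_\rho$ for all $t \in \ZZ_{\geq 0}$. The pairing with $v_\rho$ gives $\langle e + tm, v_\rho \rangle = -1 + t \cdot 0 = -1$, and for any extremal ray $\mu \neq \rho$ of $\sigma$ both $\langle e, v_\mu \rangle \geq 0$ (since $e \in (\sigma_\rho)^\vee$) and $\langle m, v_\mu \rangle \geq 0$ (since $m \in \sigma^\vee$), so $\langle e + tm, v_\mu \rangle \geq 0$. Finally $e + tm \in M$ since $M$ is a group. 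This shows $e + tm \in (\sigma_\rho)^\vee \cap M$ with the correct pairing against $v_\rho$, hence lies in $S_\rho$.

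For the inclusion $\supseteq$, I would take $m \in M$ with $e + tm \in S_\rho$ for all $t \in \ZZ_{\geq 0}$ and extract the two defining conditions of $M \cap \sigma^\vee \cap \rho^\bot$. From $\langle e + tm, v_\rho\rangle = -1$ for all such $t$ (and in particular for $t = 0$ giving $\langle e, v_\rho\rangle = -1$), subtracting yields $t \langle m, v_\rho \rangle = 0$ for every $t \geq 0$, hence $m \in \rho^\bot$. For any other extremal ray $\mu$ of $\sigma$, the inequality $\langle e + tm, v_\mu \rangle \geq 0$ holds for every $t \in \ZZ_{\geq 0}$; since the left-hand side is an affine function of $t$ with slope $\langle m, v_\mu \rangle$, boundedness below along $t \to \infty$ forces $\langle m, v_\mu \rangle \geq 0$. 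As the extremal rays $\{v_\rho\} \cup \{v_\mu : \mu \neq \rho\}$ span $\sigma$, this shows $m \in \sigma^\vee$, completing the proof.

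There is no real obstacle here; the only point that requires any care is the asymptotic argument in the second direction (ruling out negative slopes via $t \to \infty$), which is essentially the observation that $S_\rho$ is cut out by linear inequalities on the affine slice $\langle \cdot, v_\rho\rangle = -1$.
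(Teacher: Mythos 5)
Your proof is correct. The forward inclusion is exactly what the paper has in mind when it calls that direction ``clear''. For the reverse inclusion you and the paper prove the same fact by the same underlying mechanism (letting $t \to \infty$), but packaged differently: the paper writes $m/\norm{m} = \lim_{t\to\infty}(e+tm)/\norm{e+tm} \in (S_\rho)_\infty \subseteq (\sigma_\rho^\vee)_\infty \cap (e+\rho^\bot)_\infty = \sigma^\vee \cap \rho^\bot$, leaning on the asymptotic-cone calculus it has already set up in Section~3 and uses repeatedly elsewhere (e.g.\ in Lemma~\ref{lem.Finiteness_condition}), whereas you unwind this into an explicit check of each defining inequality of $S_\rho$: the pairing with $v_\rho$ forces $m \in \rho^\bot$, and for each other extremal ray $\mu$ the affine function $t \mapsto \sprod{e+tm}{v_\mu}$ is bounded below on $\ZZ_{\geq 0}$, so its slope $\sprod{m}{v_\mu}$ is nonnegative. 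Your version is more elementary and self-contained (it also handles $m=0$ uniformly, which the paper excludes at the outset since $0$ trivially lies in both sets); the paper's version is a one-line computation once the asymptotic-cone formalism is available. The only implicit point in your argument worth making explicit is that $\sigma$, being strongly convex, is generated by its extremal rays, so that $\sprod{m}{v_\mu} \geq 0$ for all extremal rays $\mu$ indeed implies $m \in \sigma^\vee$ --- but this is standard and unproblematic.
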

 	
 	\begin{proof}[Proof of Lemma~\ref{lem.spcecial_cone_in_S_rho}]
 		The inclusion ``$\subseteq$'' is clear. Now, let $m \in M \setminus \{0\}$ such that 
 		$e + tm \in S_\rho$ for all $t \in \ZZ_{\geq 0}$. Hence,
 		\[
 		\frac{m}{\norm{m}} = \lim_{t \to \infty} \frac{e + tm}{\norm{e+tm}} \in (S_\rho)_\infty \subseteq
 		(\sigma_\rho^\vee)_{\infty} \cap (e + \rho^\bot)_{\infty} = 
 		\sigma_\rho^\vee \cap \rho^\bot = \sigma^\vee \cap \rho^\bot \, ,
 		\]
 		and thus $m \in M \cap \sigma^\vee \cap \rho^\bot$.
 	\end{proof}  

	\begin{proof}[Proof of Proposition~\ref{prop.Theta_and_the_facets}]
		Before starting with the proof of~\eqref{prop.Theta_and_the_facets_0}-\eqref{prop.Theta_and_the_facets_ii},
		we gather some preliminary facts. Within this proof, we will use the following notation:
		Denote $E_i = (e_{i1}, \ldots, e_{i(n-1)})$.
		For every $m \in M$, $m' \in M'$ and every $i \in \{1, \ldots, n-1\}$, we denote moreover
		\begin{equation}
			\label{Eq.definition}
			m_i \coloneqq \det(e_{i1}, \ldots, e_{i(n-1)}, m) 
			\quad \textrm{and} \quad
			m_i' \coloneqq \det(\Upsilon(e_{i1}), \ldots, \Upsilon(e_{i(n-1)}), m) \, .
		\end{equation}
	
		Let $H_i' \coloneqq \Span_{\RR} (\Upsilon(E_i))$. Due to property~\ref{property.Upsilon_2},
		Lemma~\ref{lem.Chara_all_H_e_rho_d_finite}, Lemma~\ref{lem.H_E_rho_pmd}, 
		Lemma~\ref{lem.Chara_intersection_H_e_rho_d_i} 
		and Corollary~\ref{cor.H_E_rho_D_preserved}, 
		\[
			\Upsilon(E) \coloneqq (\Upsilon(E_1), \ldots, \Upsilon(E_{n-1})) \in (D(X')^{n-1})^{n-1}
		\]
		is an admissible system for the extremal ray $\rho'$ of $\sigma'$
		by using that $E$ is an admissible system for the
		extremal ray $\rho$ of $\sigma$.
		Due to Corollary~\ref{cor.H_E_rho_D_preserved}, there exist $D \geq 0$ and 
		$\varepsilon_1, \ldots, \varepsilon_{n-1} \in \{\pm1\}$ with
		\begin{equation}
			\label{Eq.Preserving_H_Ei_rho_d}
			\Upsilon(H_{E_i, \rho, d}) = H_{\Upsilon(E_i), \rho', \varepsilon_i d} \quad
			\textrm{for all $d \in \ZZ$ with $|d| \geq D$} \, .
		\end{equation}
		Let $e \in B_{E, \rho, D}$. Then we have $|e_i| \geq D$ for all $i =1, \ldots, n-1$ and thus
		\[
			\{\Upsilon(e)\} \xlongequal{\textrm{Lem.~\ref{lem.Chara_intersection_H_e_rho_d_i}}}  
			\bigcap_{i=1}^{n-1} \Upsilon(H_{E_i, \rho, e_i}) \xlongequal{\eqref{Eq.Preserving_H_Ei_rho_d}}
			\bigcap_{i=1}^{n-1} H_{\Upsilon(E_i), \rho', \varepsilon_i e_i} \, .
		\]
		(see~\eqref{Eq.definition} for the definition of $e_i$). In other words, we have
		\begin{equation}
			\label{Eq.Image_of_point_under_Theta}
			\Upsilon(e)_i = \varepsilon_i e_i \quad \textrm{for all $i = 1, \ldots, n-1$} \, .
		\end{equation}
		
		\medskip
		
		\eqref{prop.Theta_and_the_facets_0}:
		By Lemma~\ref{lem.Chara_all_H_e_rho_d_finite}, there exist $\delta_1, \ldots, \delta_{n-1} \in \{\pm1\}$
		such that after possibly enlarging $D \geq 0$, we have
		$H_{E_i, \rho, -\delta_i d} = \varnothing$ for all $d \geq D$.
		Due to Remark~\ref{rem.Big_es}, we get thus the statement.
		
		\eqref{prop.Theta_and_the_facets_i}:
		Let $e \in B_{E, \rho, D}$ and $m \in M \cap \sigma^\vee \cap \rho^\bot$. We define $p_e \coloneqq \Upsilon(e+m)-\Upsilon(e)$.
		By~\eqref{prop.Theta_and_the_facets_0} we have $e+m \in B_{E, \rho, D}$. Thus we get
		for all $i = 1, \ldots, n-1$:
		\[
			\varepsilon_i(e_i + m_i) 
			\xlongequal{\eqref{Eq.Image_of_point_under_Theta}} 
			\Upsilon(e+m)_i
			= (\Upsilon(e) + p_e)_i = \Upsilon(e)_i + (p_e)_i 
			\xlongequal{\eqref{Eq.Image_of_point_under_Theta}} \varepsilon_i e_i + (p_e)_i \, .
		\]
		Hence, $\varepsilon_i m_i = (p_e)_i$ for all $i$. If $f \in B_{E, \rho, D}$ is another element, then 
		we get for all $i =1, \ldots, n-1$
		\[
			(\Upsilon(e+m)-\Upsilon(e))_i = (p_e)_i = \varepsilon_i m_i = (p_f)_i = (\Upsilon(f+m)-\Upsilon(f))_i \, .
		\]
		This gives for all $i=1, \ldots, n-1$
		\[
			\det(\Upsilon(e_{i1}), \ldots, \Upsilon(e_{i(n-1)}), \Upsilon(e+m)-\Upsilon(e)- \Upsilon(f+m) +\Upsilon(f)) = 0 \, .
		\]
		Since the elements $\Upsilon(e+m), \Upsilon(e), \Upsilon(f+m), \Upsilon(f)$ are lying in $S_{\rho'}$,
		it follows that 
		$\Upsilon(e+m)-\Upsilon(e) - \Upsilon(f+m) + \Upsilon(f) \in (\rho')^\bot$.
		In summary, we achieve
		\[
			\Upsilon(e+m)-\Upsilon(e)- \Upsilon(f+m) +\Upsilon(f) \in \bigcap_{i=1}^{n-1} H_i' \cap (\rho')^\bot =  \{0\} \, .
		\]
		Hence, $\Upsilon(e+m)-\Upsilon(e) = \Upsilon(f+m)-\Upsilon(f)$.
		
		\medskip
		
		\eqref{prop.Theta_and_the_facets_ii}: Let $e \in B_{E, \rho, D}$ and let 
		$m \in M \cap \sigma^\vee \cap \rho^\bot$. Hence, we have
		$e + tm \in S_\rho$ for all $t \in \ZZ_{\geq 0}$ and by~\eqref{prop.Theta_and_the_facets_0} we have
		$e + tm \in B_{E, \rho, D}$ for all $t \in \ZZ_{\geq 0}$. This gives
		\[
			\Upsilon(e+tm) - \Upsilon(e) = \sum_{i=0}^{t-1} \Upsilon(e+(t-i)m)-\Upsilon(e+(t-i-1)m)
			\stackrel{\eqref{prop.Theta_and_the_facets_i}}{=} t(\Upsilon(e+m)-\Upsilon(e))
		\]
		and thus 
		$\Upsilon(e) + t(\Upsilon(e+m)-\Upsilon(e)) = \Upsilon(e+tm) \in S_{\rho'}$
		for all $t \in \ZZ_{\geq 0}$. Lemma~\ref{lem.spcecial_cone_in_S_rho} gives us
		$\Upsilon(e+m)-\Upsilon(e) \in M' \cap (\sigma')^\vee \cap (\rho')^\bot$.
		This implies thus 
		\begin{equation}
			\label{Eq.A_Cone}
			\Upsilon(e + M \cap \sigma^\vee \cap \rho^\bot) \subseteq
			\Upsilon(e) + M' \cap (\sigma')^\vee \cap (\rho')^\bot \, .
		\end{equation}
		By using that $\Upsilon(E)$ is an admissible system for the extremal ray $\rho'$ of $\sigma'$
		and by possibly enlarging $D$,
		we may apply the above argument to $\Upsilon^{-1}$, and thus we get
		\begin{equation}
			\label{Eq.B_Cone}
			\Upsilon^{-1}(\Upsilon(e) + M' \cap (\sigma')^\vee \cap (\rho')^\bot) \subseteq
			e + M \cap \sigma^\vee \cap \rho^\bot \, .
		\end{equation}
		The inclusions~\eqref{Eq.A_Cone} and~\eqref{Eq.B_Cone} yield now~\eqref{prop.Theta_and_the_facets_ii}.
	\end{proof}

	Using Proposition~\ref{prop.Theta_and_the_facets}, we will prove in case
	$\Upsilon(S_\rho) = S_{\rho'}$ that there exists a unique $\RR$-linear map
	$\psi_\rho \colon M_{\RR} \to M'_{\RR}$ that extends $\Upsilon |_{S_\rho} \colon S_\rho \to S_{\rho'}$
	up to finitely many elements of $S_\rho$:

	\begin{corollary}
		\label{cor.facets}
		Assume that $\sigma^\vee$ is strongly convex and let $\rho$ be an extremal ray of $\sigma$.
		If $\Upsilon(S_{\rho}) = S_{\rho'}$ for some extremal ray $\rho'$ of $\sigma'$, then
		there exists a unique $\RR$-linear isomorphism  
		$\psi_{\rho} \colon M_{\RR} \to M'_{\RR}$ such that
		\[
			\psi_{\rho}(e) = \Upsilon(e) \quad  \textrm{for almost all $e \in S_\rho$
			(for all $e \in S_\rho$ if $n = 1$)} \, .
		\]
		Moreover, $\psi_\rho$ satisfies:
		\begin{enumerate}
			\item \label{cor.facets_1} $\psi_{\rho}(M) = M'$;
			\item \label{cor.facets_2} $\psi_{\rho}(\rho^\bot) = (\rho')^\bot$; 
			\item \label{cor.facets_3} $\psi_{\rho}(\sigma^\vee \cap \rho^\bot) = (\sigma')^\vee \cap (\rho')^\bot$.
		\end{enumerate}
	\end{corollary}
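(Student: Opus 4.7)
The plan is to use Proposition~\ref{prop.Theta_and_the_facets} to promote the set-theoretic bijection $\Upsilon|_{S_\rho}$ into an $\RR$-linear map. Invoke Proposition~\ref{prop.Existence_of_admissible_sys} to fix an admissible system $E$ for $\rho$, and let $D \geq 0$ be the constant produced by Proposition~\ref{prop.Theta_and_the_facets}. By Lemma~\ref{lem.Chara_all_H_e_rho_d_finite} the set $S_\rho \setminus B_{E, \rho, D}$ is finite, so $S_\rho \cap B_{E, \rho, D}$ is the cofinite subset of $S_\rho$ on which $\psi_\rho$ will need to agree with $\Upsilon$.

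Next, fix some $e_0 \in S_\rho \cap B_{E, \rho, D}$ and set
\[
	\widetilde\psi \colon M \cap \sigma^\vee \cap \rho^\bot \to M', \qquad m \mapsto \Upsilon(e_0 + m) - \Upsilon(e_0) \, .
\]
Independence of the choice of $e_0 \in B_{E, \rho, D}$ is~\eqref{prop.Theta_and_the_facets_i}, and~\eqref{prop.Theta_and_the_facets_ii} forces $\widetilde\psi$ to land in $M' \cap (\sigma')^\vee \cap (\rho')^\bot$. A one-line telescoping computation, using~\eqref{prop.Theta_and_the_facets_0} to guarantee $e_0 + m_1 \in B_{E, \rho, D}$ for $m_1 \in M \cap \sigma^\vee \cap \rho^\bot$, shows $\widetilde\psi$ is additive. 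Lemma~\ref{lem.span_facet} then lets me extend $\widetilde\psi$ uniquely to a group homomorphism on $M \cap \rho^\bot$. Since $v_\rho$ is primitive and $\sprod{e_0}{v_\rho} = -1$, one has a splitting $M = \ZZ\,e_0 \oplus (M \cap \rho^\bot)$; declaring $\psi_\rho(e_0) \coloneqq \Upsilon(e_0)$ defines a group homomorphism $M \to M'$, which I tensor with $\RR$ to obtain the required $\psi_\rho \colon M_\RR \to M'_\RR$.

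To verify $\psi_\rho(e) = \Upsilon(e)$ for every $e \in S_\rho \cap B_{E, \rho, D}$, write $e - e_0 = m_1 - m_2$ with $m_1, m_2 \in M \cap \sigma^\vee \cap \rho^\bot$ (possible by Lemma~\ref{lem.span_facet}); then $e + m_2 = e_0 + m_1$, and applying~\eqref{prop.Theta_and_the_facets_i} to the pair $(e, e_0)$ with $m_1$ and with $m_2$ collapses the difference $\widetilde\psi(m_1) - \widetilde\psi(m_2)$ to $\Upsilon(e) - \Upsilon(e_0)$. Uniqueness of $\psi_\rho$ is then immediate: the set $\{e_0\} \cup (e_0 + (M \cap \sigma^\vee \cap \rho^\bot))$ lies inside $S_\rho \cap B_{E, \rho, D}$ by~\eqref{prop.Theta_and_the_facets_0} and spans $M_\RR$, again by Lemma~\ref{lem.span_facet}. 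Property~\ref{cor.facets_3} is~\eqref{prop.Theta_and_the_facets_ii} read after $\RR$-linear extension; property~\ref{cor.facets_2} follows from it and Lemma~\ref{lem.span_facet}; and property~\ref{cor.facets_1}, together with bijectivity of $\psi_\rho$, is obtained by running the identical construction for $\Upsilon^{-1}$ (after possibly enlarging $D$) and invoking the uniqueness just established to recognise the composites as the identity. The case $n = 1$ is degenerate, since then $S_\rho$ is a single point and $\rho^\bot = \{0\}$, and can be handled directly.

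The main obstacle is essentially organizational: unpacking Proposition~\ref{prop.Theta_and_the_facets} into a clean additive map on $M \cap \sigma^\vee \cap \rho^\bot$ and checking that its $\RR$-linear extension really reproduces $\Upsilon$ on the cofinite set. All genuine geometric content is already packaged in Proposition~\ref{prop.Theta_and_the_facets}, so the remaining argument is bookkeeping.
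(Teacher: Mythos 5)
Your construction is essentially the paper's: fix an admissible system and the constant $D$ from Proposition~\ref{prop.Theta_and_the_facets}, define the additive map $m \mapsto \Upsilon(e_0+m)-\Upsilon(e_0)$ on $M \cap \sigma^\vee \cap \rho^\bot$, extend it via Lemma~\ref{lem.span_facet} and the splitting $M = \ZZ e_0 \oplus (M \cap \rho^\bot)$, and verify agreement with $\Upsilon$ on $B_{E,\rho,D}$; your variations (writing $e-e_0 = m_1-m_2$ instead of the paper's $e+m = e_0+tm_0$, and deducing \ref{cor.facets_1} by composing with the construction for $\Upsilon^{-1}$ rather than computing $\psi_\rho(M)$ directly) are all sound. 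The one under-specified step is uniqueness: a competing linear map need only agree with $\Upsilon$ off an \emph{arbitrary} finite set $A$, which may meet your spanning set $\{e_0\}\cup(e_0+(M\cap\sigma^\vee\cap\rho^\bot))$; the paper repairs exactly this by translating by $tm_0$ for $t$ large (with $m_0$ in the relative interior of $\sigma^\vee\cap\rho^\bot$) so that the resulting basis avoids $A$, and your argument needs the same one-line adjustment.
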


	\begin{proof}
		We may assume that $n \geq 2$, since otherwise $S_\rho$ and $S_{\rho'}$
		are singletons and thus the statement is clear.
		
		\textbf{Uniqueness:} Assume that $\psi_\rho, \bar{\psi}_\rho \colon M_{\RR} \to M'_{\RR}$ are $\RR$-linear
		isomorphisms such that there exists a finite set $A \subseteq S_\rho$ with
		$\psi_\rho(e) = \Upsilon(e) = \bar{\psi}_\rho(e)$ for all $e \in S_\rho \setminus A$.
		Let $m_0 \in M$ be in the relative interior of $\sigma^\vee \cap \rho^\bot$ and let $e_0 \in S_\rho$.
		As $A$ is finite, there exists $t \in \ZZ_{\geq 0}$ 
		such that 
		\[
			\sprod{e_0 + t m_0}{v_\mu} = \underbrace{\sprod{e_0}{v_\mu}}_{\geq 0} + t 
			\underbrace{\sprod{m_0}{v_\mu}}_{>0} > \sprod{a}{v_{\mu}} \quad
			\textrm{for all $a \in A$}
		\]
		and for all extremal rays $\mu$ of $\sigma$
		that are different from $\rho$. 
		Hence, 
		\[
			\sprod{e_0 + tm_0 + m}{v_\mu} >  \sprod{a}{v_{\mu}} \quad \textrm{for all $\mu \neq \rho$, 
			$m \in \sigma^\vee \cap \rho^\bot \cap M$ and $a \in A$} \, .
		\]
		This gives 
		\[
			(e_0 + tm_0) + (\sigma^\vee \cap \rho^\bot \cap M) \subseteq S_\rho \setminus A \, ,
		\]
		since $n \geq 2$ (note that there exist extremal rays $\mu \neq \rho$ in $\sigma$).
		Since  $\sigma^\vee \cap \rho^\bot \cap M$ spans $M \cap \rho^\bot$ as a group
		(see Lemma~\ref{lem.span_facet}),
		we may find $m_1, \ldots, m_{n-1} \in \sigma^\vee \cap \rho^\bot \cap M$
		that are linearly independent in $M$.
		Hence, the elements 
		\[
			e_0 + tm_0 \, , \ e_0 + tm_0 + m_1 \, , \ \ldots \, ,  e_0 + tm_0 +m_{n-1} \in S_\rho \setminus A  
		\]
		form a basis of $M_\RR$. Since $\psi_\rho(e) = \bar{\psi}_\rho(e)$ for all 
		$e \in S_\rho \setminus A$, we get $\psi_\rho = \bar{\psi}_\rho$.
		
		\textbf{Existence:} 
		Let $E = (E_1, \ldots, E_{n-1})$ be an admissible system for $\rho$ (see Proposition~\ref{prop.Existence_of_admissible_sys}) and
		let $D \geq 0$ that 
		satisfies~\eqref{prop.Theta_and_the_facets_0}-\eqref{prop.Theta_and_the_facets_ii} from Proposition~\ref{prop.Theta_and_the_facets}.
		Moreover, we fix $e_0 \in B_{E, \rho, D}$. According to~Proposition~\ref{prop.Theta_and_the_facets}\eqref{prop.Theta_and_the_facets_ii}
		the map
		\[
			\alpha_\rho \colon M \cap \sigma^\vee \cap \rho^\bot \to M' \cap (\sigma')^\vee \cap (\rho')^\bot \, , \quad
			m \mapsto \Upsilon(e_0+m)-\Upsilon(e_0)
		\]
		is well-defined and bijective. 
		
		\begin{claim}
			\label{claim.alpha_rho_semi-group_homo}
			$\alpha_\rho$ is a semi-group homomorphism.
		\end{claim}
		
		Indeed, $\alpha_\rho(0) = \Upsilon(e_0)-\Upsilon(e_0)  = 0$ and for 
		$p, m \in M \cap \sigma^\vee \cap \rho^\bot$ we have
		\begin{eqnarray*}
			\alpha_\rho(p) + \alpha_\rho(m) &=&
			\Upsilon(e_0 + p)-\Upsilon(e_0) + \Upsilon(e_0 + m)-\Upsilon(e_0) \\
			&=& \Upsilon(e_0 + p+m)-\Upsilon(e_0+m) + \Upsilon(e_0 + m)-\Upsilon(e_0) \\
			&=&\Upsilon(e_0 + p+m) - \Upsilon(e_0) \\
			&=& \alpha_\rho(p+m) \, ,
		\end{eqnarray*}
		where the second equality follows from 
		Proposition~\ref{prop.Theta_and_the_facets}\eqref{prop.Theta_and_the_facets_0},\eqref{prop.Theta_and_the_facets_i}.
		This implies Claim~\ref{claim.alpha_rho_semi-group_homo}.
		
		\medskip
		
		Note that the semi-group spanned by $M \cap \sigma^\vee \cap \rho^\bot$ inside $M$ is equal to
		$\rho^\bot \cap M$ and the semi-group spanned by $M' \cap (\sigma')^\vee \cap (\rho')^\bot$ 
		inside $M'$ is equal to $(\rho')^\bot \cap M'$ by Lemma~\ref{lem.span_facet}. Hence, $\alpha_\rho$ extends
		to a group isomorphism $\rho^\bot \cap M \to (\rho')^\bot \cap M'$ that itself extends
		to an $\RR$-linear isomorphism $\rho^\bot \to (\rho')^\bot$ that we denote again by 
		$\alpha_\rho$. We define $\psi_\rho \colon M_\RR \to M'_{\RR}$ by
		\[
			\psi_\rho(s e_0 + u) = s \Upsilon(e_0) + \alpha_\rho(u) \quad
			\textrm{for all $s \in \RR$ and all $u \in \rho^\bot$} \, .
		\] 
		Hence, $\psi_\rho$ is an $\RR$-linear isomorphism. 
		By Lemma~\ref{lem.Chara_all_H_e_rho_d_finite}, the complement of $B_{E, \rho, D}$
		in $S_\rho$ is finite. Let $e \in B_{E, \rho, D}$. Take any 
		$m_0 \in M$ inside the relative interior of $\sigma^\vee \cap \rho^\bot$. 
		Then, we may choose $t \in \ZZ_{\geq 0}$
		such that $\sprod{e_0 + tm_0 -e}{v_\mu} \geq 0$ for all extremal rays $\mu$ of $\sigma$. 
		Hence, $m \coloneqq e_0 + tm_0 -e \in M \cap \sigma^\vee \cap \rho^\bot$ and thus we get
		\begin{eqnarray*}
			\psi_\rho(e) &=& \psi_\rho(e_0 + tm_0)- \psi_\rho(m) \\
			&=& \Upsilon(e_0) + \alpha_\rho(tm_0)-\alpha_\rho(m) \\
			&=& \Upsilon(e_0) + \Upsilon(e_0 + tm_0)-\Upsilon(e_0)-(\Upsilon(e_0+m)-\Upsilon(e_0)) \\
			&=& \Upsilon(e+m) -\Upsilon(e_0+m)+\Upsilon(e_0) \\
			&=& \Upsilon(e+m) -\Upsilon(e+m)+\Upsilon(e)  \\
			&=&  \Upsilon(e) \, ,
		\end{eqnarray*}
		where the second last equality follows from Proposition~\ref{prop.Theta_and_the_facets}\eqref{prop.Theta_and_the_facets_i}.
		Hence, $\psi_\rho$ and $\Upsilon$ agree on almost every element from $S_\rho$.
		Now, we only have to check that $\psi_\rho$ satisfies~\eqref{cor.facets_1}-\eqref{cor.facets_3}.
		
		Statement~\eqref{cor.facets_1} follows from the following calculation:
		\begin{eqnarray*}
			\psi_\rho(M) = \psi_\rho(\ZZ e_0 \oplus (\rho^\bot \cap M))
							   &=& \psi_\rho(\ZZ e_0) \oplus \psi_\rho(\rho^\bot \cap M) \\
							   &=& \ZZ \Upsilon(e_0) \oplus ((\rho')^\bot \cap M')  = M' \, .
		\end{eqnarray*}
		By definition,~\eqref{cor.facets_2} is satisfied. By construction,
		we have that $\psi_\rho(M \cap \sigma^\vee \cap \rho^\bot) = M' \cap (\sigma')^\vee \cap (\rho')^\bot$. 
		Hence, statement~\eqref{cor.facets_3} is satisfied, as the convex cone
		generated by $M \cap \sigma^\vee \cap \rho^\bot$ inside 
		$M_\RR$ is equal to $\sigma^\vee \cap \rho^\bot$
		and as an analogous statement holds for $(\sigma')^\vee \cap (\rho')^\bot$.
	\end{proof}

	The next consequence of Proposition~\ref{prop.Theta_and_the_facets} says that
	for extremal rays $\mu_1$, $\mu_2$ of $\sigma$ the previously defined linear isomorphisms
	$\psi_{\mu_2}, \psi_{\mu_1} \colon M_{\RR} \to M'_{\RR}$ 
	coincide on $\sigma^\vee \cap \mu_1^\bot \cap \mu_2^\bot$.

	\begin{corollary}
		\label{cor.intersection_of_facets}
		Assume that $\sigma^\vee$ is strongly convex and 
		let $\mu_1, \mu_2$ be extremal rays of $\sigma$.
		Let $\psi_{\mu_1}, \psi_{\mu_2} \colon M_{\RR} \to M'_{\RR}$ be the $\RR$-linear isomorphisms
		defined in Corollary~\ref{cor.facets}. Then 
		\[
			\psi_{\mu_1}(u) = \psi_{\mu_2}(u) \quad 
			\textrm{for all $u \in \sigma^\vee \cap \mu_1^\bot \cap \mu_2^\bot$} \, .
		\]
	\end{corollary}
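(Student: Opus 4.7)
My plan is to reduce to integer points and then match the two values using Proposition~\ref{prop.Theta_and_the_facets}.

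First, by $\RR$-linearity of $\psi_{\mu_1}$ and $\psi_{\mu_2}$, and since the rational polyhedral cone $\sigma^\vee \cap \mu_1^\bot \cap \mu_2^\bot$ is spanned over $\RR_{\geq 0}$ by its intersection with $M$, it suffices to verify the equality on lattice points $u \in M \cap \sigma^\vee \cap \mu_1^\bot \cap \mu_2^\bot$. Fix such a $u$. Combining the construction of $\psi_{\mu_i}$ in Corollary~\ref{cor.facets} via the semi-group homomorphism $\alpha_{\mu_i}$ with Proposition~\ref{prop.Theta_and_the_facets}\eqref{prop.Theta_and_the_facets_ii}, we get, for any $e_i$ in the cofinite subset $B_{E^{(i)}, \mu_i, D_i} \subseteq S_{\mu_i}$ associated to an admissible system $E^{(i)}$ for $\mu_i$,
\[
    \psi_{\mu_i}(u) \;=\; \Upsilon(e_i + u) - \Upsilon(e_i) \;\in\; M' \cap (\sigma')^\vee \cap (\mu_i')^\bot \, .
\]

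The remaining task is to match these two expressions, which a priori live in different facets of $(\sigma')^\vee$. My plan is to chain $\mu_1$ to $\mu_2$ through the extremal rays of the face $\tau$ of $\sigma$ dual to the smallest face of $\sigma^\vee$ containing $u$: choose a sequence $\mu_1 = \nu_0, \nu_1, \ldots, \nu_k = \mu_2$ of extremal rays of $\tau$ such that each consecutive pair $\nu_j, \nu_{j+1}$ spans a common $2$-face of $\tau$. Then $u \in \sigma^\vee \cap \nu_j^\bot \cap \nu_{j+1}^\bot$ for every $j$, and iterating reduces the problem to the adjacent case $\psi_{\nu_j}(u) = \psi_{\nu_{j+1}}(u)$.

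The main obstacle is precisely this adjacent case: since $S_{\nu_j}$ and $S_{\nu_{j+1}}$ are disjoint, Proposition~\ref{prop.Theta_and_the_facets}\eqref{prop.Theta_and_the_facets_i} does not directly compare reference points across the two strata. I would handle it by selecting $e \in B_{E^{(j)}, \nu_j, D_j}$ and $f \in B_{E^{(j+1)}, \nu_{j+1}, D_{j+1}}$ whose difference is a lattice element, using the freedom to translate by arbitrarily large vectors from $M \cap \sigma^\vee \cap \nu_j^\bot \cap \nu_{j+1}^\bot$ (which preserves both strata by Lemma~\ref{lem.spcecial_cone_in_S_rho}), and then exploiting the uniqueness statement in Corollary~\ref{cor.facets} together with properties~\ref{property.Upsilon_1} and~\ref{property.Upsilon_2} of $\Upsilon$ to identify both $\Upsilon(e+u) - \Upsilon(e)$ and $\Upsilon(f+u) - \Upsilon(f)$ with the same canonical lattice translation in the codim-$2$ face $M' \cap (\sigma')^\vee \cap (\nu_j')^\bot \cap (\nu_{j+1}')^\bot$.
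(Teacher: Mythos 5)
Your reduction to lattice points $u \in M \cap \sigma^\vee \cap \mu_1^\bot \cap \mu_2^\bot$ and the identity $\psi_{\mu_i}(u) = \Upsilon(e_i + u) - \Upsilon(e_i)$ are fine, and you correctly locate the heart of the matter: comparing these two differences when $e_1$ and $e_2$ lie in the \emph{disjoint} strata $S_{\mu_1}$ and $S_{\mu_2}$. But that is exactly where your argument stops being a proof. The chaining through $2$-faces does not reduce the difficulty --- the ``adjacent'' case is the same problem all over again (two different strata, no way to apply Proposition~\ref{prop.Theta_and_the_facets}\eqref{prop.Theta_and_the_facets_i}, which only compares reference points within a single $B_{E,\rho,D}$) --- and your proposed resolution (``identify both with the same canonical lattice translation in the codim-$2$ face'') names no such canonical object and gives no mechanism for producing one. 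The uniqueness statement of Corollary~\ref{cor.facets} is uniqueness of the linear extension of $\Upsilon|_{S_\rho}$ for a \emph{fixed} $\rho$; it says nothing about agreement of $\psi_{\mu_1}$ and $\psi_{\mu_2}$. The condition that $f - e$ be a lattice element is vacuous ($e, f \in M$ always), so it cannot be doing any work either.

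The missing idea, and the one the paper actually uses, is to exploit property~\ref{property.Upsilon_2} asymptotically. Fix $m \in M \cap \sigma^\vee \cap \mu_1^\bot \cap \mu_2^\bot \setminus \{0\}$ and $e_i \in S_{\mu_i}$; then $e_i + tm \in S_{\mu_i}$ for all $t \geq 0$ and $\psi_{\mu_i}(e_i + tm) = \Upsilon(e_i + tm)$ for $t$ large. Computing $\lim_{t\to\infty} t^{-2}\,|\det(\Upsilon(e_1+tm), \Upsilon(e_2+tm), \Upsilon(e_3), \ldots, \Upsilon(e_n))|$ two ways --- once via the $\psi_{\mu_i}$ and once via~\ref{property.Upsilon_2} pulled back to $M$, where it equals $|\det(m, m, e_3, \ldots, e_n)| = 0$ --- forces $\psi_{\mu_1}(m)$ and $\psi_{\mu_2}(m)$ to be linearly \emph{dependent}. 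Note this only gives dependence, not equality: since~\ref{property.Upsilon_2} controls only absolute values of determinants, a sign ambiguity remains, and it is resolved by writing $m = s m_0$ with $m_0$ primitive, observing that $\psi_{\mu_1}(m_0)$ and $\psi_{\mu_2}(m_0)$ are primitive in $M'$ (by Corollary~\ref{cor.facets}\eqref{cor.facets_1}), hence equal up to sign, and then using that both lie in the strongly convex cone $(\sigma')^\vee$ to exclude the minus sign. Your sketch contains neither the dependence argument nor any mechanism for resolving the sign, so as it stands the proof has a genuine gap.
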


	\begin{proof}
		If $n = 1$, then $\sigma$ has exactly one extremal ray (as $\sigma^\vee$ is strongly convex) 
		and thus nothing is to prove. Hence, we may assume $n \geq 2$.
		
		Let $m \in M \cap \sigma^\vee \cap \mu_1^\bot \cap \mu_2^\bot \setminus \{0\}$. 
		\begin{claim}
			\label{claim.lin_dependent}
			$\psi_{\mu_1}(m)$, $\psi_{\mu_2}(m)$ are linearly dependent in $M'$.
		\end{claim}
	
		Indeed, assume towards a contradiction that $\psi_{\mu_1}(m)$, $\psi_{\mu_2}(m)$ are linearly
		independent in $M'$. Since $D(X')$ generates $M'$ as  a group
		(here we use that $(\sigma')^\vee$ is strongly convex, see~\cite[Corollary~6.10]{ReSa2021Characterizing-smo})
		we may choose
		$e_3, \ldots, e_n \in D(X)$ such that 
		$\psi_{\mu_1}(m), \psi_{\mu_2}(m), \Upsilon(e_3), \ldots, \Upsilon(e_n)$
		are linearly independent in $M'$. Moreover, choose $e_i \in S_{\mu_i}$ for $i=1,2$.
		For $i=1, 2$, using that $\psi_{\mu_i}$ coincides with $\Upsilon$ on a cofinite
		subset of $S_{\mu_i}$ and that $m \neq 0$, we get
		\[
			\psi_{\mu_i}(e_i) + t \psi_{\mu_i}(m) = \psi_{\mu_i}(e_i + t m) = \Upsilon(e_i + tm) \quad
			\textrm{for $t \in \ZZ_{\geq 0}$ large enough} \, .
		\] 
		We get now the following contradiction
		\begin{eqnarray*}
			0 &\neq& |\det(\psi_{\mu_1}(m), \psi_{\mu_2}(m), \Upsilon(e_3), \ldots, \Upsilon(e_n))| \\
				&=& 
				\lim_{t \to \infty} \frac{1}{t^2} \left|\det(\psi_{\mu_1}(e_1) + t \psi_{\mu_1}(m), 
				\psi_{\mu_2}(e_2) + t \psi_{\mu_2}(m), \Upsilon(e_3), \ldots, \Upsilon(e_n)) \right| \\
				&=&
				\lim_{t \to \infty} \frac{1}{t^2} \left|\det(\Upsilon(e_1 + t m), 
				\Upsilon(e_2 + t m), \Upsilon(e_3), \ldots, \Upsilon(e_n)) \right| \\
				&\stackrel{\textrm{\ref{property.Upsilon_2}}}{=}&
				\lim_{t \to \infty} \frac{1}{t^2} \left|\det(e_1 + t m, e_2 + t m, e_3, \ldots, e_n) \right|
				= \left|\det(m, m, e_3, \ldots, e_n) \right| \, .
		\end{eqnarray*}
		This proves Claim~\ref{claim.lin_dependent}.
		
		\medskip
		
		Let $m_0 \in M$ be the unique primitive vector such that
		there exists $s \in \ZZ_{> 0}$ with $m = sm_0$ (here we use $m \neq 0$).
		As $\psi_{\mu_i}(M) = M'$ for $i=1, 2$ (see Corollary~\ref{cor.facets}\eqref{cor.facets_1}),
		$\psi_{\mu_1}(m_0)$, $\psi_{\mu_2}(m_0)$ are linearly dependent 
		primitive vectors in $M'$ due to Claim~\ref{claim.lin_dependent}. 
		Hence,  $\psi_{\mu_1}(m_0)$ and $\psi_{\mu_2}(m_0)$ are the same up to a sign.
		As both elements belong to $(\sigma')^\vee$ and since $(\sigma')^\vee$ is strongly convex,
		we get $\psi_{\mu_1}(m_0) = \psi_{\mu_2}(m_0)$. Hence,
		\[
			\psi_{\mu_1}(m) = s \psi_{\mu_1}(m_0) = s \psi_{\mu_2}(m_0) = \psi_{\mu_2}(m) \, .
		\]
		As $m \in M \cap \sigma^\vee \cap \mu_1^\bot \cap \mu_2^\bot \setminus \{0\}$ was arbitrary and since
		$\sigma^\vee \cap \mu_1^\bot \cap \mu_2^\bot$ is the convex
		cone generated by $M \cap \sigma^\vee \cap \mu_1^\bot \cap \mu_2^\bot$
		inside $M_{\RR}$, the result follows.
	\end{proof}

	The next proposition will be the key in the proof of the main theorem.

	\begin{proposition}
		\label{prop.psi}
		Assume that $\sigma^\vee$ is strongly convex.
		Then there exists a bijection
		\[
			\psi \colon \partial \sigma^\vee \to \partial (\sigma')^\vee
		\] 
		such that
		\begin{enumerate}
			\item \label{prop.psi_1} $\psi(M \cap \partial \sigma^\vee) = M' \cap \partial (\sigma')^\vee$;
			\item \label{prop.psi_2} there exists $\varepsilon \in \{\pm1\}$ with
			$\det(\psi(u_1), \ldots, \psi(u_n)) =  \varepsilon \det(u_1, \ldots, u_n)$
			for all $u_1, \ldots, u_n \in \partial \sigma^\vee$.
		\end{enumerate}
	\end{proposition}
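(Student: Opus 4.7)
The plan is to glue the linear isomorphisms $\psi_{\rho}$ supplied by Corollary~\ref{cor.facets}, one for each extremal ray $\rho$ of $\sigma$, into a single piecewise $\RR$-linear bijection on $\partial \sigma^\vee$. Concretely, for each extremal ray $\rho_i$ of $\sigma$, property~\ref{property.Upsilon_1} gives (after renumbering the $\rho'_j$) an extremal ray $\rho'_i$ of $\sigma'$ with $\Upsilon(S_{\rho_i}) = S_{\rho'_i}$, so by Corollary~\ref{cor.facets} one obtains an $\RR$-linear isomorphism $\psi_{\rho_i} \colon M_{\RR} \to M'_{\RR}$ with $\psi_{\rho_i}(M) = M'$ and $\psi_{\rho_i}(\sigma^\vee \cap \rho_i^\bot) = (\sigma')^\vee \cap (\rho'_i)^\bot$. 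Corollary~\ref{cor.intersection_of_facets} ensures these agree on the overlaps $\sigma^\vee \cap \rho_i^\bot \cap \rho_j^\bot$, so setting $\psi(u) \coloneqq \psi_{\rho_i}(u)$ for $u \in \sigma^\vee \cap \rho_i^\bot$ gives a well-defined continuous map $\psi \colon \partial \sigma^\vee \to \partial (\sigma')^\vee$. Applying the same construction to $\Upsilon^{-1}$ (which also satisfies properties~\ref{property.Upsilon_1}--\ref{property.Upsilon_3}) provides a two-sided inverse, so $\psi$ is bijective; property~\eqref{prop.psi_1} is then immediate from Corollary~\ref{cor.facets}\eqref{cor.facets_1},\eqref{cor.facets_3}.

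To establish property~\eqref{prop.psi_2} I would first prove the absolute-value identity
\[
	|\det(u_1, \ldots, u_n)| = |\det(\psi(u_1), \ldots, \psi(u_n))|
\]
for all $n$-tuples $(u_1, \ldots, u_n) \in (\partial \sigma^\vee)^n$. Given nonzero $u_i \in M \cap \sigma^\vee \cap \rho_{k_i}^\bot$, choose $e_0^{(i)} \in S_{\rho_{k_i}}$. By Lemma~\ref{lem.spcecial_cone_in_S_rho}, $e_0^{(i)} + t u_i$ lies in $S_{\rho_{k_i}}$ for every $t \in \ZZ_{\geq 0}$, and Corollary~\ref{cor.facets} gives $\Upsilon(e_0^{(i)} + t u_i) = \psi_{\rho_{k_i}}(e_0^{(i)}) + t\, \psi(u_i)$ once $t$ is large enough. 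Substituting into property~\ref{property.Upsilon_2}, both sides become polynomials in $t$ with leading $t^n$-coefficients $|\det(u_1, \ldots, u_n)|$ and $|\det(\psi(u_1), \ldots, \psi(u_n))|$ respectively; dividing by $t^n$ and letting $t \to \infty$ yields the identity for integer tuples, and continuity of $\psi$ together with density of $M \cap \partial \sigma^\vee$ extends it to all real tuples (tuples containing $0$ being trivial).

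It remains to upgrade the absolute-value identity to a signed equality with a uniform $\varepsilon \in \{\pm 1\}$. Consider the continuous ratio $r(u_1, \ldots, u_n) \coloneqq \det(\psi(u_1), \ldots, \psi(u_n))/\det(u_1, \ldots, u_n)$ on the open set $U \subseteq (\partial \sigma^\vee)^n$ where the denominator is nonzero; by the absolute-value identity it takes values in $\{\pm 1\}$, hence is locally constant. Its value depends only on the combinatorial type $c = (c_1, \ldots, c_n)$ recording a facet $\sigma^\vee \cap \rho_{c_i}^\bot$ containing each $u_i$, since on such a cell both numerator and denominator are multilinear in the $u_i$. Two cells differing in a single index share a boundary on which Corollary~\ref{cor.intersection_of_facets} forces $\psi_{\rho_{c_i}}$ and $\psi_{\rho_{c_i'}}$ to coincide, so their signs $\varepsilon_c, \varepsilon_{c'}$ agree whenever that boundary carries a $\det \neq 0$ configuration, and coordinate permutations also preserve $r$. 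The cases $n \le 2$ are handled by direct inspection; for $n \ge 3$ one uses that $D(X)$ generates $M$ as a group (as in the proof of Corollary~\ref{cor.intersection_of_facets}) to show that the graph of cells meeting $U$ is connected modulo adjacency and permutation, producing a single $\varepsilon$.

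The main obstacle is precisely this last step: verifying that all combinatorial cells of $(\partial \sigma^\vee)^n$ carrying $\det$-nonzero configurations can be linked via shared boundaries and coordinate permutations so that a single sign $\varepsilon$ can be propagated. This is a combinatorial-geometric statement about the facet structure of the strongly convex cone $\sigma^\vee$, and in small cases or degenerate configurations one may have to exploit the permutation symmetry of $r$ to bridge otherwise disconnected pieces of the cell complex.
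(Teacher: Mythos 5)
Your construction of $\psi$ by gluing the $\psi_{\rho_i}$ along Corollary~\ref{cor.intersection_of_facets}, the proof of property~\eqref{prop.psi_1}, and the derivation of the absolute-value identity $|\det(u_1,\ldots,u_n)| = |\det(\psi(u_1),\ldots,\psi(u_n))|$ by the limit $t \to \infty$ all match the paper's argument (the paper packages the per-cell sign as Claim~\ref{Claim1}, concluding $\mu = \pm\eta$ from $\mu^2 = \eta^2$ in the polynomial ring, which is the clean way to justify your ``depends only on the combinatorial type'' assertion). However, the step you yourself flag as ``the main obstacle'' --- propagating a single sign $\varepsilon$ across all combinatorial cells --- is left unproved, and it is precisely the nontrivial content of the paper's proof of~\eqref{prop.psi_2}. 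That is a genuine gap: without it you only know that each tuple $(i_1,\ldots,i_n)$ of facet indices (not all equal) carries its own sign $\varepsilon_{i_1,\ldots,i_n}$, and nothing forces these to coincide.

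The paper closes this gap with Lemma~\ref{lem.Schoenflies_application}. After reducing to changing one index at a time (say $i_1 \mapsto i_1'$ with $i_2,\ldots,i_n$ fixed), one must connect a point $x \in \sigma^\vee \cap \rho_{i_1}^\bot$ to a point $y \in \sigma^\vee \cap \rho_{i_1'}^\bot$ by a chain $z_0,\ldots,z_k \in \partial\sigma^\vee$ in which consecutive points lie on a common facet \emph{and} every $z_l$ still spans $M_\RR$ together with $\bigcup_{j\geq 2}\sigma^\vee\cap\rho_{i_j}^\bot$ (so that Claim~\ref{Claim3} yields a $\det\neq 0$ configuration at each step and the sign can be transported via~\eqref{Eq.key}). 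The delicate case is when $i_2 = \cdots = i_n = \rho$: the chain must avoid $\rho^\bot$ entirely. The paper handles this by slicing $\partial\sigma^\vee$ with an affine hyperplane $u+H$ transverse to $\sigma^\vee$, observing that $(u+H)\cap\partial\sigma^\vee$ is an $(n-2)$-sphere and $(u+H)\cap\partial(\sigma^\vee\cap\rho^\bot)$ an $(n-3)$-sphere, and invoking the generalized Schoenflies theorem to conclude that the complement $(u+H)\cap(\partial\sigma^\vee\setminus\rho^\bot)$ is an $(n-2)$-disc, hence path-connected. Your suggestion to rely on permutation symmetry of the ratio $r$ and on $D(X)$ generating $M$ does not substitute for this topological input; some such connectivity argument for the facet complex away from a prescribed facet is required, and it is not a formality.
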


	\begin{proof}
		For $i=1, \ldots, r$, let $\psi_{\rho_i} \colon M_\RR \to M'_\RR$ be the $\RR$-linear isomorphism
		from Corollary~\ref{cor.facets}. Using Corollary~\ref{cor.intersection_of_facets}
		and property~\ref{property.Upsilon_1},
		there exists a bijective map $\psi \colon \partial \sigma^\vee \to \partial (\sigma')^\vee$
		such that $\psi |_{\sigma^\vee \cap \rho_i^\bot} = \psi_{\rho_i} |_{\sigma^\vee \cap \rho_i^\bot}$
		for all $i = 1, \ldots, r$.
		
		\eqref{prop.psi_1}: Since 
		$\psi_{\rho_i}(M \cap \sigma^\vee \cap \rho_i^\bot) = M' \cap (\sigma')^\vee \cap (\rho'_i)^\bot$
		for all $i=1, \ldots, r$ (see property~\ref{property.Upsilon_1} and 
		Corollary~\ref{cor.facets}\eqref{cor.facets_1}, \eqref{cor.facets_3}), the statement follows.
		
		\eqref{prop.psi_2}:
		If $n = 1$, then $\sigma$ has exactly one extremal ray $\rho$ and thus the statement 
		follows from $\psi_{\rho}(0) = 0$. 
		Hence, we assume $n \geq 2$.
		We establish first two claims.
		
			\begin{claim}
			\label{Claim3}
			Let $1 \leq i \leq n$, let 
			$j_1, \ldots, j_{i-1}, j_{i+1}, \ldots, j_n \in \{1, \ldots, r\}$ and let $u \in M_{\RR}$ be a non-zero element such that
			\begin{equation}
				\label{Eq.Generating}
				\Span_{\RR} \left( \{u\} \cup \bigcup_{k \neq i} \sigma^\vee \cap \rho_{j_k}^\bot \right) = M_{\RR} \, .
			\end{equation}
			Then, there exist $u_{k} \in \rho_{j_k}^\bot \cap \sigma^\vee$, $k = 1, \ldots, i-1, i+1, \ldots, n$ 
			such that
			\[
			\det(u_1, \ldots, u_{i-1}, u, u_{i+1}, \ldots, u_n) \neq 0 \, .
			\]
		\end{claim}
		
		Indeed, without loss of generality, we may assume $i = 1$.
		We construct inductively the elements $u_2, \ldots, u_n \in M_{\RR}$.
		For $1 \leq k < n$, assume that 
		$u_2, \ldots, u_k$ are already constructed such that $u, u_2, \ldots u_k$
		are linearly independent and 
		$u_l \in (\rho_{j_l}^\bot \cap \sigma^\vee) \setminus \bigcup_{a \neq j_l} \rho_a^\bot$
		for $l = 2, \ldots, k$. If the span of the elements $u, u_2, \ldots, u_k$ in $M_{\RR}$
		is equal to $\rho_{j_{k+1}}^\bot$, then $k = n-1$ and $j_l = j_{n}$ for $l =2, \ldots, n$. 
		However, this would then contradict the assumption~\eqref{Eq.Generating}.
		Hence, there exists an element $u_{k+1} \in \rho_{j_{k+1}}^\bot \cap \sigma^\vee$ 
		such that $u_{k+1} \not\in \Span_{\RR} \{u, u_2, \ldots, u_k\}$ and $u_{k+1} \not\in \rho_a^\bot$
		for all $a \neq j_{k+1}$. This finishes our construction of the elements $u_2, \ldots, u_{n}$
		and proves Claim~\ref{Claim3}.

		\begin{claim} 
		\label{Claim1}
		If $(i_1, \ldots, i_n) \in \{1, \ldots, r\}^n$ such that 
		$i_1, \ldots, i_n$ are not all equal, then there exists a unique
		$\varepsilon_{i_1, \ldots, i_n} \in \{\pm1\}$ with
		\[
			\det(\psi_{\rho_{i_1}}(u_1), \ldots, \psi_{\rho_{i_n}}(u_n)) = \varepsilon_{i_1, \ldots, i_n} \det(u_1, \ldots, u_n)
		\]
		for all $(u_1, \ldots, u_n) \in \rho_{i_1}^\bot \times \cdots \times \rho_{i_n}^\bot$.
		In particular, $\varepsilon_{i_1, \ldots, i_n} = \varepsilon_{\xi(i_1), \ldots, \xi(i_n)}$
		for all permutations $\xi$ of $\{1, \ldots, r\}$.
		\end{claim}
		
		Indeed, let $m_j \in \sigma^\vee \cap \rho_{i_j}^\bot \cap M$, $e_j \in S_{\rho_{i_j}}$ 
		for $j =1,\ldots, n$.
		By Corollary~\ref{cor.facets}, for $t \in \ZZ_{\geq 0}$ large enough, we get
		$\psi_{\rho_{i_j}}(e_j + tm_j) = \Upsilon(e_j + tm_j)$ for all $j=1, \ldots, n$. Hence,
		\begin{eqnarray*}
				&& |\det(\psi_{\rho_{i_1}}(m_1), \ldots, \psi_{\rho_{i_n}}(m_n))| \\
				&=& \lim_{t \to \infty} 
				\frac{1}{t^n} 
				|\det(\psi_{\rho_{i_1}}(e_1 + tm_1), \ldots, \psi_{\rho_{i_n}}(e_n + tm_n))| \\
				&=& \lim_{t \to \infty} \frac{1}{t^n}  | \det(\Upsilon(e_1 + tm_1), \ldots, \Upsilon(e_n + tm_n))| \\
				&\stackrel{\textrm{\ref{property.Upsilon_2}}}{=}& 
				\lim_{t \to \infty} \frac{1}{t^n}  | \det(e_1 + tm_1, \ldots, e_n + tm_n)| = | \det(m_1, \ldots, m_n)| \, .
		\end{eqnarray*}
		Consider the two real polynomial maps
		\begin{eqnarray*}
			\eta \colon \rho_{i_1}^\bot \times \cdots \times \rho_{i_n}^\bot \to \RR \, , &\quad&
			(u_1, \ldots, u_n) \mapsto \det(u_1, \ldots, u_n) \\
			\mu \colon \rho_{i_1}^\bot \times \cdots \times \rho_{i_n}^\bot \to \RR \, , &\quad&
			(u_1, \ldots, u_n) \mapsto \det(\psi_{\rho_{i_1}}(u_1), \ldots, \psi_{\rho_{i_n}}(u_n)) \, .
		\end{eqnarray*}
		We proved that $\eta^2$ and $\mu^2$ agree on the Zariski dense subset 
		$(\sigma^\vee \cap \rho_{i_1}^\bot \cap M) \times \cdots \times (\sigma^\vee \cap\rho_{i_n}^\bot \cap M)$.
		Hence, $\eta^2 = \mu^2$.
		Note that $\eta$ and $\mu$ are non-zero, as not all $i_1, \ldots, i_n$ are equal, see e.g.~Claim~\ref{Claim3}.
		Since $\mu^2 = \eta^2$, the prime factors of the two real polynomials 
		$\eta$ and $\mu$ are equal up to  elements from $\RR \setminus \{0\}$.
		This implies Claim~\ref{Claim1}.

		%
		
		\medskip
		
		In order to prove~\eqref{prop.psi_2}, 
		we have to show for all $i_1, \ldots, i_n, i_1', \ldots, i_n' \in \{1, \ldots, r\}$ where
		not all $i_1, \ldots, i_n$ are equal and not all $i_1', \ldots, i_n'$ are equal that 
		\begin{equation}
			\label{Eq.ToShow}
			\varepsilon_{i_1, \ldots, i_n} = \varepsilon_{i_1', \ldots, i_n'} \, .
		\end{equation}
		If $n = 2$, then $r = 2$ and~\eqref{Eq.ToShow} follows directly from Claim~\ref{Claim1}.
		Hence, we assume $n \geq 3$.
		In order to prove~\eqref{Eq.ToShow}, it is enough to consider the case where
		$i_j = i_j'$ for $j=2, \ldots, n$.
		
		Note the following: if $a, b \in \{1, \ldots, r\}$ such that $a, i_2, \ldots, i_n$
		are not all equal and $b, i_2, \ldots, i_n$ are  not all equal
		and if there exist $u \in \sigma^\vee \cap \rho_a^\bot \cap \rho_b^\bot$,
		$u_j \in \sigma^\vee \cap\rho_{i_j}^\bot$, $j=2, \ldots, n$, 
		such that $u, u_2, \ldots, u_n$ are linearly independent,
		then 
		\begin{equation}
			\label{Eq.key} 
			\varepsilon_{a, i_2, \ldots, i_n} \xlongequal{\textrm{Claim~\ref{Claim1}}}
			\frac{\det(\psi(u), \psi(u_2) \ldots, \psi(u_n))}{\det(u, u_2, \ldots, u_n)}
			\xlongequal{\textrm{Claim~\ref{Claim1}}} \varepsilon_{b, i_2, \ldots, i_n} \, .
		\end{equation}
		As not all $i_1, \ldots, i_n$ are equal and not all $i_1', i_2 \ldots, i_n$ are equal,
		there exist non-zero $x \in \sigma^\vee \cap \rho_{i_1}^\bot$ and $y \in \sigma^\vee \cap \rho_{i_1'}^\bot$
		such that 
		\[
			\Span_{\RR}\left(\{x\} \cup \bigcup_{j = 2}^n \sigma^\vee \cap \rho_{i_j}^\bot\right)
			= M_{\RR} = 
			\Span_{\RR}\left(\{y\} \cup \bigcup_{j = 2}^n \sigma^\vee \cap \rho_{i_j}^\bot\right) \, .
		\]
		By Lemma~\ref{lem.Schoenflies_application}
		there exist $k \geq 1$ and non-zero $z_0, \ldots, z_k \in \partial \sigma^\vee$ such that 
		\begin{enumerate}
			\item $x = z_0$, $y = z_k$;
			\item for all $0 \leq l < k$ there exists a facet $\sigma^\vee \cap \rho_{r_l}^\bot$
			that contains $z_l$ and $z_{l+1}$;
			\item $\Span_{\RR}(\{z_l\} \cup \bigcup_{j=2}^{n} \sigma^\vee \cap \rho_{i_j}^\bot) = M_{\RR}$
					for all $0 \leq  l \leq k$.
		\end{enumerate}
		By Claim~\ref{Claim3}, for $0 \leq l \leq k$, there exist 
		$u_{lj} \in \sigma^\vee \cap \rho_{i_j}^\bot$, $j =2, \ldots, n$ 
		such that $z_l, u_{l2}, \ldots, u_{ln}$
		are linearly independent. Since
		$z_l \in \sigma^\vee \cap \rho_{r_{r_{l-1}}}^\bot \cap \rho_{r_{l}}^\bot$
		for all $0 < l < k$ and since $z_0 \in \sigma^\vee \cap \rho_{r_{i_1}}^\bot \cap \rho_{r_{0}}^\bot$,
		$z_k \in \sigma^\vee \cap \rho_{r_{i'_1}}^\bot \cap \rho_{r_{k-1}}^\bot$, ~\eqref{Eq.key} implies 
		\[
			\varepsilon_{i_1, i_2, \ldots, i_n} = 
			\varepsilon_{r_0, i_2, \ldots, i_n} =
			\varepsilon_{r_1, i_2, \ldots, i_n} =
			\cdots = \varepsilon_{r_{k-1}, i_2, \ldots, i_n} = \varepsilon_{i_1', i_2,  \ldots, i_n} \, .
		\]
		This proves~\eqref{Eq.ToShow} and gives the second statement of the proposition.
	\end{proof}

	Using Proposition~\ref{prop.psi} we prove now that there exists an
	$\RR$-linear isomorphism $\Psi \colon M_{\RR} \to M'_{\RR}$ that sends $M$ onto $M'$
	and $\sigma^\vee$ onto $(\sigma')^\vee$. In particular this corollary says, that $X$ and $X'$
	are isomorphic as toric varieties.

	\begin{corollary}
		\label{cor.psi_linear_and_preserves_M}
		Assume that $\sigma^\vee$ is strongly convex.
		Then there exists an $\RR$-linear 
		isomorphism $\Psi \colon M_{\RR} \to M'_{\RR}$ 
		with $\Psi(M) = M'$ and $\Psi(\sigma^\vee) = \Psi((\sigma')^\vee)$.
	\end{corollary}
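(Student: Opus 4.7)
The plan is to extend the boundary map $\psi$ from Proposition~\ref{prop.psi} to an $\RR$-linear map and then identify it with the map $\psi_{\rho_1}$ supplied by Corollary~\ref{cor.facets}. If $n = 1$, the statement is trivial: take $\Psi = \psi_\rho$ for the unique extremal ray $\rho$ and invoke Corollary~\ref{cor.facets}. Assume $n \geq 2$. First pick linearly independent elements $u_1, \ldots, u_n \in \partial \sigma^\vee$---namely $u_1, \ldots, u_{n-1} \in \sigma^\vee \cap \rho_1^\bot$ spanning the hyperplane $\rho_1^\bot$ and $u_n \in \sigma^\vee \cap \rho_2^\bot$ with $u_n \notin \rho_1^\bot$---and define $\Psi \colon M_\RR \to M'_\RR$ as the unique $\RR$-linear map with $\Psi(u_i) = \psi(u_i)$. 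By Proposition~\ref{prop.psi}\eqref{prop.psi_2} the vectors $\psi(u_1), \ldots, \psi(u_n)$ are linearly independent, so $\Psi$ is a linear isomorphism. To see that $\Psi$ extends $\psi$, write any $v \in \partial \sigma^\vee$ as $v = \sum_i \lambda_i u_i$, set $w := \psi(v) - \Psi(v)$, and for each $j$ observe that multilinearity together with Proposition~\ref{prop.psi}\eqref{prop.psi_2} gives
\[
\det(\psi(u_1), \ldots, \psi(u_{j-1}), w, \psi(u_{j+1}), \ldots, \psi(u_n)) = \varepsilon \lambda_j \det(u_1, \ldots, u_n) - \lambda_j \varepsilon \det(u_1, \ldots, u_n) = 0.
\]
Hence $w$ lies in the intersection of the $n$ hyperplanes spanned by $(n-1)$-subsets of the $\psi(u_i)$, which is $\{0\}$. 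Thus $\Psi|_{\partial \sigma^\vee} = \psi$, and combined with Lemma~\ref{lem.span_facet} this forces $\Psi|_{\rho_i^\bot} = \psi_{\rho_i}|_{\rho_i^\bot}$ for every extremal ray $\rho_i$.

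The main obstacle is to upgrade these hyperplane-wise equalities to a \emph{global} identification $\psi_{\rho_1} = \psi_{\rho_j}$ for every $j \neq 1$. Fix such a $j$ and pick linearly independent $e_1 \in S_{\rho_1}$, $e_j \in S_{\rho_j}$, $e_3, \ldots, e_n \in S_{\rho_1}$ generically enough that Corollary~\ref{cor.facets} yields $\Upsilon(e_k) = \psi_{\rho_1}(e_k)$ for $k \neq j$ and $\Upsilon(e_j) = \psi_{\rho_j}(e_j)$. Using $|\det(\psi_{\rho_1})| = 1$ (from Corollary~\ref{cor.facets}\eqref{cor.facets_1}) and property~\ref{property.Upsilon_2}, we obtain
\[
|\det(\psi_{\rho_1}(e_1), \psi_{\rho_j}(e_j), \psi_{\rho_1}(e_3), \ldots, \psi_{\rho_1}(e_n))| = |\det(\psi_{\rho_1}(e_1), \psi_{\rho_1}(e_j), \psi_{\rho_1}(e_3), \ldots, \psi_{\rho_1}(e_n))|.
\]
Setting $d_j := \psi_{\rho_j}(e_j) - \psi_{\rho_1}(e_j)$, multilinearity forces either $d_j$ or $d_j + 2 \psi_{\rho_1}(e_j)$ to lie in $\Span_\RR(\psi_{\rho_1}(e_k) : k \neq j)$. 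Varying the $e_k$'s generically turns each of these inclusions into a codimension-one condition, so their union can cover every allowed $n$-tuple only if one inclusion holds identically; by linearity this forces $\psi_{\rho_j} = \psi_{\rho_1}$ or $\psi_{\rho_j} = -\psi_{\rho_1}$. The second alternative is excluded: for $n \geq 3$ it would force $\psi_{\rho_1}$ to vanish on the nonzero subspace $\rho_1^\bot \cap \rho_j^\bot$ by Corollary~\ref{cor.intersection_of_facets}, contradicting bijectivity; for $n = 2$, the identity $\sprod{\psi_{\rho_1}(m)}{v_{\rho'_1}} = \sprod{m}{v_{\rho_1}}$ (a consequence of $\Upsilon(S_{\rho_1}) = S_{\rho'_1}$ and linearity) applied to a generic $e_j \in S_{\rho_j}$ with $\sprod{e_j}{v_{\rho_1}} > 0$ yields $\sprod{\Upsilon(e_j)}{v_{\rho'_1}} = -\sprod{e_j}{v_{\rho_1}} < 0$, contradicting $\Upsilon(e_j) \in S_{\rho'_j} \subseteq \{m' \in M'_\RR : \sprod{m'}{v_{\rho'_1}} \geq 0\}$.

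With $\psi_{\rho_j} = \psi_{\rho_1}$ for all $j$, the equalities $\Psi|_{\rho_j^\bot} = \psi_{\rho_j}|_{\rho_j^\bot}$ combined with $\sum_j \rho_j^\bot = M_\RR$ (which holds since $\sigma$ is full-dimensional) yield $\Psi = \psi_{\rho_1}$. Hence $\Psi(M) = \psi_{\rho_1}(M) = M'$ by Corollary~\ref{cor.facets}\eqref{cor.facets_1}, and $\Psi(\sigma^\vee \cap \rho_i^\bot) = (\sigma')^\vee \cap (\rho'_i)^\bot$ for every $i$ by Corollary~\ref{cor.facets}\eqref{cor.facets_3}. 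In particular $\Psi$ restricts to a bijection between the facets of $\sigma^\vee$ and those of $(\sigma')^\vee$; since each cone is the convex hull of its extremal rays (which are intersections of facets), this gives $\Psi(\sigma^\vee) = (\sigma')^\vee$.
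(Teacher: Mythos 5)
Your first paragraph is correct, and it is in substance the paper's own argument: the paper likewise turns the determinant identity of Proposition~\ref{prop.psi}\eqref{prop.psi_2} into linearity (it checks that linear relations among boundary points are preserved and defines $\Psi$ on all of $M_{\RR}$, whereas you define $\Psi$ on a basis inside $\partial\sigma^\vee$ and verify agreement on $\partial\sigma^\vee$; the two mechanisms are interchangeable). The paper then finishes immediately, extracting $\Psi(M)=M'$ from Proposition~\ref{prop.psi}\eqref{prop.psi_1} together with a $\ZZ$-spanning property of $M\cap\partial\sigma^\vee$, and $\Psi(\sigma^\vee)=(\sigma')^\vee$ from the fact that the cones are generated by their boundaries. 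Your second and third paragraphs instead aim at the much stronger identification $\psi_{\rho_j}=\psi_{\rho_1}$ for all $j$ (which, if established, does yield the lattice statement cleanly via Corollary~\ref{cor.facets}\eqref{cor.facets_1}), and this is where there is a genuine gap.

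The sign exclusion for $n\geq 3$ is wrong as written. Corollary~\ref{cor.intersection_of_facets} gives $\psi_{\rho_1}(u)=\psi_{\rho_j}(u)$ only for $u$ in the \emph{cone} $\sigma^\vee\cap\rho_1^\bot\cap\rho_j^\bot$, not on the linear subspace $\rho_1^\bot\cap\rho_j^\bot$. That subspace does have dimension $n-2\geq 1$, but the cone can be $\{0\}$: if $\sigma^\vee\subseteq\RR^3$ is the cone over a square, two opposite facets meet only in the origin, so for such a pair $\rho_1,\rho_j$ your argument excludes nothing and $\psi_{\rho_j}=-\psi_{\rho_1}$ remains possible. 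In effect your exclusion only works for facets sharing a nonzero face, and passing from adjacent pairs to arbitrary pairs is precisely the connectivity problem the paper solves with Lemma~\ref{lem.Schoenflies_application} inside the proof of Proposition~\ref{prop.psi}; it is not free. (A lesser issue: the step ``varying the $e_k$'s generically \dots only if one inclusion holds identically'' is a sketch; it needs Zariski density of the admissible tuples in the irreducible variety $(e+\rho_1^\bot)^{n-1}$, and since the resulting sign may a priori depend on $e_j$, one more such density argument is needed to obtain a global sign.)

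Fortunately, your $n=2$ argument is the correct one and works verbatim for every $n\geq 2$: Corollary~\ref{cor.facets}\eqref{cor.facets_2} together with one point of $S_{\rho_1}$ where $\psi_{\rho_1}=\Upsilon$ gives $\sprod{\psi_{\rho_1}(m)}{v_{\rho'_1}}=\sprod{m}{v_{\rho_1}}$ for all $m\in M_{\RR}$, in any dimension; applying this to a generic $e_j\in S_{\rho_j}$ with $\sprod{e_j}{v_{\rho_1}}>0$ and $\Upsilon(e_j)=\psi_{\rho_j}(e_j)=-\psi_{\rho_1}(e_j)$ yields $\sprod{\Upsilon(e_j)}{v_{\rho'_1}}<0$, contradicting $\Upsilon(e_j)\in S_{\rho'_j}$. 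Replacing your $n\geq 3$ paragraph by this observation repairs the proof.
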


	\begin{proof} 
		As $\sigma^\vee$ and $(\sigma')^\vee$ are both strongly convex, the statement is trivial in case
		$n = 1$. Hence, we assume $n \geq 2$.
		Let $\psi \colon \partial \sigma^\vee \to \partial (\sigma')^\vee$
		be the  bijective map from Proposition~\ref{prop.psi}.
		
		\begin{claim}
			\label{claim.well-definedness_psi}
			If $\lambda_1, \ldots, \lambda_k \in \RR$ and $u_1, \ldots, u_k \in \partial \sigma^\vee$ such that 
			$\sum_{i=1}^k \lambda_i u_i = 0$, then $\sum_{i=1}^k \lambda_i \psi(u_i) = 0$.
		\end{claim}
	
		Indeed, assume towards a contradiction that $\sum_{i=1}^k \lambda_i \psi(u_i) \neq 0$.
		Since $\psi(\partial \sigma^\vee) = \partial (\sigma')^\vee$ spans $M'_{\RR}$
		(as $(\sigma')^\vee$ is strongly convex and $n \geq 2$), 
		there exist elements $z_1, \ldots, z_{n-1} \in \partial \sigma^\vee$ such that
		\begin{eqnarray*}
			D \coloneqq \sum_{i=1}^k \lambda_i \det(\psi(z_1), \ldots, \psi(z_{n-1}), \psi(u_i))
			= \det\left(\psi(z_1), \ldots, \psi(z_{n-1}), \sum_{i=1}^k \lambda_i \psi(u_i)\right)
		\end{eqnarray*}
		is non-zero.
		By Proposition~\ref{prop.psi}\eqref{prop.psi_2} there exists $\varepsilon \in \{\pm 1\}$ such that
		\[
			\varepsilon D = \sum_{i=1}^k \lambda_i \det(z_1, \ldots, z_{n-1}, u_i) 
			= \det\left(z_1, \ldots, z_{n-1}, \sum_{i=1}^k \lambda_i u_i \right) = 0 \, .
		\]
		This contradicts $D \neq 0$ and proves Claim~\ref{claim.well-definedness_psi}.
		
		\medskip
		
		If $u \in M_{\RR}$, take $u_1, \ldots, u_k \in \partial \sigma^\vee$ and 
		$\lambda_1, \ldots, \lambda_k \in \RR$ such that $u = \sum_{i=1}^k \lambda_i u_i$.
		By Claim~\ref{claim.well-definedness_psi}, the element 
		$\Psi(u) \coloneqq \sum_{i=1}^k \lambda_i \psi(u_i) \in M_{\RR}$ is well-defined, i.e.~independent
		of the choice of $\lambda_1, \ldots, \lambda_k \in \RR$ and $u_1, \ldots, u_k \in \partial \sigma^\vee$.
		Hence, the map
		\[
			\Psi \colon M_{\RR} \to M'_{\RR} \, , \quad u \mapsto \Psi(u)
		\]
		is well-defined. By construction, $\Psi$ is $\RR$-linear and $\Psi$ extends $\psi$. 
		Since $\sigma^\vee$, $(\sigma')^\vee$ are strongly convex and since $n \geq 2$, 
		$\sigma$ and $\sigma'$ have
		at least two distinct extremal rays. By Lemma~\ref{lem.span_facet} we get thus
		$\Span_{\ZZ}(M \cap \partial \sigma^\vee) = M$ and 
		$\Span_{\ZZ}(M' \cap \partial (\sigma')^\vee) = M'$. Proposition~\ref{prop.psi}\eqref{prop.psi_1}
		and the linearity of $\Psi$ imply thus $\Psi(M) = M'$. In particular, $\Psi$
		is an $\RR$-linear isomorphism. Moreover, the
		convex cone generated by $\partial \sigma^\vee$ ($\partial (\sigma')^\vee$)  is equal to 
		$\sigma^\vee$ ($(\sigma')^\vee$). Hence, $\Psi(\sigma^\vee) = (\sigma')^\vee$.
	\end{proof}

	\begin{proof}[Proof of the main theorem]
		By Proposition~\ref{prop.reduction_strongly_convex} we are reduced to the case
		when $\sigma^\vee$ is strongly convex. Hence, the statement follows from Corollary~\ref{cor.psi_linear_and_preserves_M}.
	\end{proof}

	\subsection{The case when a bijection $D(X) \to D(X')$ extends to a group isomorphism $M \to M'$}
	We finish this section by showing that a bijection $D(X) \to D(X')$ which 
	extends to a group isomorphism $M \to M'$ automatically satisfies the properties~\ref{thm.properties_bijection_Theta_1},~\ref{thm.properties_bijection_Theta_2}
	and~\ref{thm.properties_bijection_Theta_3} 
	of the main theorem. 

	\begin{proposition}
		\label{Prop.Reconstruction}
		If $\phi \colon D(X) \to D(X')$ is a bijection
		that extends to a group-iso\-mor\-phism $\Phi \colon M \to M'$, then $\Phi$ maps
		$\Lambda(X)$ onto $\Lambda(X')$ and hence $\phi$ satisfies the 
		properties~\ref{thm.properties_bijection_Theta_1},~\ref{thm.properties_bijection_Theta_2}
		and~\ref{thm.properties_bijection_Theta_3} 
		of the main theorem. 
	\end{proposition}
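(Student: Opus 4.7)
The plan is to first establish the key equality $\Phi(\Lambda(X)) = \Lambda(X')$, from which all three properties follow quickly. Let $\Phi_{\RR} \colon M_{\RR} \to M'_{\RR}$ denote the $\RR$-linear isomorphism extending $\Phi$, and let $\Phi^{\vee} \colon N' \to N$ be its dual lattice isomorphism, characterised by $\sprod{\Phi(m)}{v'} = \sprod{m}{\Phi^{\vee}(v')}$.

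The first step is to describe $M_X$ intrinsically in terms of $D(X)$, namely as the largest subgroup $G \subseteq M$ satisfying $D(X) + G = D(X)$. The inclusion $M_X \subseteq G$ is noted at the end of Subsection~\ref{subsec.Weights_root_subgroups}. For the converse, fix $m \in G$ and an extremal ray $\rho_i$ of $\sigma$. For any $e \in S_{\rho_i}$ and any $k \in \ZZ$, the element $e + km$ lies in some $S_{\rho_{j(k)}}$; by the very definition of the $S_\rho$, the integer $\sprod{e + km}{v_{\rho_i}} = -1 + k \sprod{m}{v_{\rho_i}}$ equals $-1$ when $j(k) = i$ and is non-negative otherwise, so it belongs to $\{-1\} \cup \ZZ_{\geq 0}$ for every $k \in \ZZ$. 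Considering both signs of $k$ forces $\sprod{m}{v_{\rho_i}} = 0$, and varying $\rho_i$ yields $m \in M \cap \sigma^{\bot} = M_X$. Since $\Phi$ is a group isomorphism $M \to M'$ mapping $D(X)$ bijectively onto $D(X')$, this intrinsic characterization is preserved, giving $\Phi(M_X) = M_{X'}$. In particular $\rank M_X = \rank M_{X'}$ and $\phi(e + M_X) = \phi(e) + M_{X'}$, which is property~\ref{thm.properties_bijection_Theta_3}.

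The second step reduces to the non-degenerate case. Using $\Phi(M_X) = M_{X'}$ and $D(X) = \tau^{-1}(D(X_0))$, the map $\Phi$ descends to a group isomorphism $\bar{\Phi} \colon M_0 \to M'_0$ restricting to a bijection $D(X_0) \to D(X'_0)$. The induced $\RR$-linear isomorphism $\bar{\Phi}_{\RR}$ is bi-Lipschitz and hence preserves asymptotic cones; it also preserves convex hulls and lattices. Applying formula~\eqref{Eq.Formula_Weight_monoid} to the non-degenerate varieties $X_0$ and $X'_0$ yields
\[
	\bar{\Phi}(\Lambda(X_0)) = \bar{\Phi}_{\RR}\bigl(\Conv(D(X_0)_{\infty})\bigr) \cap \bar{\Phi}(M_0) = \Conv(D(X'_0)_{\infty}) \cap M'_0 = \Lambda(X'_0).
\]
Using the decomposition $\sigma^{\vee} = \sigma_0^{\vee} + \sigma^{\bot}$ from Subsection~\ref{subsec.Decomposition_affine_toric_variety}, we have $\Lambda(X) = \tau^{-1}(\Lambda(X_0))$ and similarly for $X'$. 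Pulling back along $\tau$ and $\tau'$ therefore gives $\Phi(\Lambda(X)) = \Lambda(X')$, equivalently $\Phi_{\RR}(\sigma^{\vee}) = (\sigma')^{\vee}$.

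Finally, the remaining two properties follow. Property~\ref{thm.properties_bijection_Theta_2} is automatic because $\Phi$ is an isomorphism of free $\ZZ$-modules of rank $n$: its matrix in any choice of bases has determinant $\pm 1$, so $|\det|$ of an $n$-tuple is preserved. For property~\ref{thm.properties_bijection_Theta_1}, dualising $\Phi_{\RR}(\sigma^{\vee}) = (\sigma')^{\vee}$ gives $\Phi^{\vee}(\sigma') = \sigma$; since $\Phi^{\vee}$ is a lattice isomorphism it sends primitive integral generators to primitive integral generators and permutes the extremal rays, defining a permutation $\xi$ of $\{1, \ldots, r\}$ by $\Phi^{\vee}(v_{\rho'_{\xi(i)}}) = v_{\rho_i}$. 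Then for $e \in S_{\rho_i}$ and $j \in \{1, \ldots, r\}$, $\sprod{\Phi(e)}{v_{\rho'_j}} = \sprod{e}{v_{\rho_{\xi^{-1}(j)}}}$ equals $-1$ exactly when $j = \xi(i)$ and is non-negative otherwise, showing $\Phi(S_{\rho_i}) = S_{\rho'_{\xi(i)}}$. The main obstacle is the intrinsic characterization of $M_X$ in the first step, as it is precisely what unlocks the descent to $X_0$ and $X'_0$ where formula~\eqref{Eq.Formula_Weight_monoid} does the rest.
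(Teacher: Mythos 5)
Your overall strategy is sound and far more detailed than the paper's own proof, which consists essentially of a single citation: it quotes the trichotomy of \cite[Corollary~8.4]{ReSa2021Characterizing-smo} (reproduced verbatim in the proof) and observes that in each of the three cases $\Lambda(Y)$ is described purely in terms of $D(Y)$ and the linear and lattice structure of $\frak{X}(T)$, so that $\Phi(\Lambda(X)) = \Lambda(X')$ follows at once, with no reduction to the non-degenerate case. Your detour --- the intrinsic characterization of $M_X$ as the translation stabilizer of $D(X)$ (a correct and self-contained argument via the pairings $\sprod{\cdot}{v_{\rho_i}}$, not in the paper), followed by descent to $X_0$ --- is a legitimate alternative; it delivers property~\ref{thm.properties_bijection_Theta_3} before knowing $\Phi(\sigma^\vee) = (\sigma')^\vee$, whereas the paper obtains it afterwards from $M_X = M \cap \sigma^\bot$. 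Your deductions of properties~\ref{thm.properties_bijection_Theta_1} and~\ref{thm.properties_bijection_Theta_2} at the end are also correct.

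The one step that can fail is the application of formula~\eqref{Eq.Formula_Weight_monoid} to $X_0$ and $X_0'$. As literally stated, that formula is false when $\rank M_0 = 1$, i.e.\ when $X_0 \cong \AA^1$: there $D(X_0)$ is a single point, so $\Conv(D(X_0)_\infty) = \{0\}$, while $\Lambda(X_0)$ is a full half-line in $M_0 \cong \ZZ$. This is precisely why the Remark in the introduction speaks of ``a slightly more general form of it'' and why the paper's proof invokes the full trichotomy: in the exceptional case $D(Y)_\infty$ is a hyperplane and $\Lambda(Y)$ is the intersection of $\frak{X}(T)$ with the closed half-space bounded by $D(Y)_\infty$ that misses $D(Y)$ --- still a description intrinsic to the pair $(D(Y), M)$, hence still transported by $\bar{\Phi}$. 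For non-degenerate $Y$ of dimension at least $2$ one is always in the first case of the trichotomy, since each facet $\sigma^\vee \cap \rho^\bot$ lies in $(S_\rho)_\infty$ and the convex hull of $\partial\sigma^\vee$ is all of $\sigma^\vee$; so your argument breaks only for $X_0 \cong \AA^1$, and is repaired either by quoting the second bullet of the trichotomy or by a one-line direct check in that case.
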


	\begin{proof}
		The proof is a direct consequence of the following statement, which can be found
		in~\cite[Corollary~8.4]{ReSa2021Characterizing-smo}: if $Y$ is an affine 
		$T$-toric variety, then exactly one of the following cases occur
		(the asymptotic cones and convex hulls are all taken inside $\frak{X}(T)_{\RR}$):
		\begin{itemize}
			\item
				$\dim \Conv(D(Y)_{\infty}) = n$, $D(Y) \neq \varnothing$ and
				$\Lambda(Y) = \Conv ( D(Y)_{\infty}) \cap \frak{X}(T)$;
		\item
				$\dim \Conv(D(Y)_{\infty}) < n$, $D(Y)\neq \varnothing$,
				$D(Y)_{\infty}$ is a hyperplane in $\frak{X}(T)_{\RR}$ and
				$\Lambda(Y) = H^+ \cap \frak{X}(T)$,
				where $H^+ \subset \frak{X}(T)_{\RR}$
				is the closed half space with boundary $D(Y)_{\infty}$ 
				that does not intersect $D(Y)$;
		\item
			 $D(Y) = \varnothing$ and $\Lambda(Y) = \frak{X}(T)$. \qedhere
		\end{itemize}
	\end{proof}
	
	\providecommand{\bysame}{\leavevmode\hbox to3em{\hrulefill}\thinspace}
	\providecommand{\MR}{\relax\ifhmode\unskip\space\fi MR }
	\providecommand{\MRhref}[2]{%
	  \href{http://www.ams.org/mathscinet-getitem?mr=#1}{#2}
	}
	\providecommand{\href}[2]{#2}

\end{document}